\newtheorem{theorem}{Theorem}[section]
\newtheorem{lemma}[theorem]{Lemma}
\theoremstyle{definition}
\newtheorem{definition}[theorem]{Definition}
\theoremstyle{remark}
\newtheorem{remark}[theorem]{Remark}
\numberwithin{equation}{section}
\newcommand{\R}{{\mathbb R}}
\newcommand{\C}{{\mathbb C}}
\newcommand{\Z}{{\mathbb Z}}
\newcommand{\N}{{\mathbb N}}
\newcommand{\fg}{{\mathfrak g}}
\newcommand{\fh}{{\mathfrak h}}
\newcommand{\fb}{{\mathfrak b}}
\newcommand{\fl}{{\mathfrak l}}
\newcommand{\fn}{{\mathfrak n}}
\newcommand{\fp}{{\mathfrak p}}
\newcommand{\fu}{{\mathfrak u}}
\newcommand{\fr}{{\mathfrak r}}
\newcommand{\fs}{\mathfrak s}
\newcommand{\id}{{\text{id}}}
\newcommand{\0}{{\bar 0}}
\newcommand{\1}{{\bar 1}}
\newcommand{\U}{{{\rm U}}}
\newcommand{\Uq}{{{\rm U}_q}}
\begin{document}
\title[Serre presentations of Lie superalgebras]
{Serre presentations of Lie superalgebras}

\author{R.B. Zhang}
\thanks{Supported by the Australian Research Council.}
\address{School of Mathematics and Statistics,
University of Sydney, Sydney, Australia}
\email{ruibin.zhang@sydney.edu.au}

\begin{abstract}
An analogue of Serre's theorem is established for finite dimensional simple Lie superalgebras, which describes presentations in terms of Chevalley generators and Serre type relations relative to all possible choices of Borel subalgebras. The proof of the theorem is conceptually transparent; it also provides an alternative approach to Serre's theorem for ordinary Lie algebras.
\end{abstract}

\date{January 12, 2011}
\subjclass[2010]{Primary 17B05;  Secondary 17B20, 17B22, 17B10}
\keywords{Lie superalgebras, root systems, presentations}

\maketitle

%
%
%
%

%
%

\section{Introduction}\label{introduction}
\subsubsection{}
A well known theorem of Serre gave presentations of finite
dimensional semi-simple Lie algebras in terms of Chevalley
generators and Serre relations.  It was generalised to Kac-Moody
algebras with symmetrisable Cartan matrices by Gabber and Kac
\cite{GK}. The theorem and its generalisation now provide the standard
method to present simple Lie algebras and Kac-Moody
algebras \cite{Kac2}, as well as the associated
quantised universal enveloping algebras \cite{Dr, Jim}.

A natural question is how to present simple contragredient Lie superalgebras
(i.e., Lie superalgebras with Cartan matrices) in a similar way.
Surprisingly
this was only seriously studied after quantised
universal enveloping superalgebras \cite{BGZ90}
had become popular in the early 90s because of
their applications in a variety of areas
such as low dimensional topology \cite{LGZ93, Z95},
statistical physics \cite{BGZ90} and noncommutative
geometry \cite{Man88, Z98, Z04}.

In the Lie superalgebra setting, unconventional higher order relations \cite{LS}
are required beside the usual Serre relations, and their origin is somewhat mysterious. Since a Serre type presentation is always given relative to a chosen Borel subalgebra, the issue is further complicated by the fact
\cite{Kac1, Kac2} that a simple contragredient Lie superalgebra admits
classes of Borel subalgebras, which are not Weyl group conjugate.

\subsubsection{}

At the present, investigation on Serre type presentations for
Lie superalgebras is still rather incomplete even in the finite dimensional
case. Presentations relative to many non-distinguished Borel subalgebras
of such Lie superalgebras have never been constructed (see Remark \ref{new}). The crucial question on whether the Serre type relations obtained so far are complete (i.e., whether they are all the defining relations needed for the Lie superalgebras under consideration) has not been answered satisfactorily.
Therefore, there is the need of a systematic treatment of Serre presentations for the finite dimensional simple contragredient Lie superalgebras, and this paper aims to provide such a treatment.

\subsubsection{}
It was Leites and Serganova \cite{LS} who first obtained the higher order Serre relations for $\mathfrak{sl}_{m|n}$ relative to the so-called
distinguished Borel subalgebra (for which the simple roots
are the easiest to describe).
The corresponding quantum relations
for $\Uq(\mathfrak{sl}_{m|n})$ were constructed in \cite{Sch93,
FLV}. Yamane \cite{Y1} wrote down higher order quantum Serre
relations for quantised universal enveloping superalgebras of finite
dimensional simple Lie superalgebras for the distinguished
and some (but not all) non-distinguished Borel subalgebras.
In the ensuing years, much further work was done to find Serre type
relations for Lie superalgebras by Leites and collaborators
\cite{GL, GLP, BGLL} and by Yamane \cite{Y2}.

References \cite{GL, GLP} and \cite{Y1, Y2} represent the current state of the problem of constructing Serre type presentations for the finite dimensional simple contragredient Lie superalgebras. [Reference \cite{Y2} is largely on affine superalgebras.] However, the papers \cite{Y1, Y2} left out presentations of exceptional simple Lie superalgebras relative to non-distinguished Borel subalgebras. Reference \cite{GL} in principle treated all the Dynkin diagrams which could potentially require higher order Serre relations, but the relations in \cite{GL} and \cite{Y1, Y2} look very different and it is not clear at all whether they are equivalent.

\subsubsection{}
The problem on whether the Serre type relations constructed were complete was only investigated by computer calculations. According to \cite[\S 1]{GL}, completeness of the relations of \cite{GL} was verified by computers for finite dimensional simple contragredient Lie superalgebras, but a conceptual proof is lacking. The problem is open for the Serre type relations given in \cite{Y1, Y2},  and so is also in the infinite dimensional case.

We comment that in the cases considered in \cite{Y1},
completeness of the relations can in principle be deduced
from the existence of a non-degenerate
invariant bilinear form between the quantised universal enveloping
superalgebras of the upper and low triangular Borel
subalgebras, by using Geer's result \cite{G} that quantised
universal enveloping superalgebras are trivial deformations.
However, it is a highly complicated matter to establish the
non-degeneracy of the bilinear form even in the case of ordinary
quantised universal enveloping algebras (see, e.g., \cite{L}).
Many of the representation theoretical results
required for proving the non-degeneracy are lacking for quantised universal enveloping superalgebras,
rendering the super case much more difficult.

\subsubsection{}
In this paper, we give a complete treatment of the Serre presentations
of finite dimensional simple contragredient Lie superalgebras, proving an analogue of Serre's theorem relative to all possible choices of Borel subalgebras. Comparing our results with those of \cite{Y1} (in the $q\to 1$ limit), we have many more higher order Serre relations which are necessary, especially in the case of exceptional Lie superalgebras relative to non-distinguished Borel subalgebras. Our method is also different from those in the literature. It in particular automatically shows the completeness of the relations which we construct.

\subsubsection{}

Let us now describe more precisely the results of this paper.
Given a realisation of the Cartan matrix $A=(a_{i j})$ of a simple
contragredient Lie superalgebra with the set of simple roots
$\Pi_\fb=\{\alpha_1, \dots, \alpha_r\}$, we introduce an auxiliary
Lie superalgebra $\tilde\fg$, which is generated by Chevalley
generators  $\{e_i, \ f_i, \ h_i \mid i=1, 2, \dots, r\}$ subject to
quadratic relations only (see Definition \ref{auxiliary},
where more informative notation is used). Let $\fr$
be the $\Z_2$-graded maximal ideal of $\tilde\fg$ that intersects
trivially the Cartan subalgebra spanned by all $h_i$. Then
$L:=\tilde\fg/\fr$ is the simple Lie superalgebra which we
started with in all cases except in type $A(n, n)$ where $L$ is
$\mathfrak{sl}_{n+1|n+1}$ (see Theorem \ref{LA}).

We introduce a $\Z_2$-graded ideal $\fs$ of the auxiliary Lie
superalgebra, which is generated by explicitly given generators. A
main result proved in Theorem \ref{generating} states that
$\fs=\fr$, or equivalently, $\fg:=\tilde\fg/\fs\cong L$.
From this result, we deduce a super analogue of Serre's theorem,
Theorem \ref{Serre-g}, which gives presentations of the finite
dimensional simple contragredient Lie superalgebras relative to all
possible choices of Borel subalgebras.

The completeness of the relations in Theorem \ref{Serre-g} is guaranteed by Theorem \ref{generating}.

\subsubsection{}
The proof of Theorem \ref{generating} makes use of a $\Z$-grading of
$\tilde\fg$, which descends to $L$ and $\fg$ to give
$\Z$-gradings to these Lie superalgebras. Write $L=\oplus_k L_k$
and $\fg=\oplus_k \fg_k$ with respect the $\Z$-gradings. Lemma
\ref{key} states that $L_0 \cong \fg_0$ as Lie superalgebras and
$L_k \cong \fg_k$ as $\fg_0$-modules for all $k\ne 0$. Then Theorem
\ref{generating} follows from this lemma.

The unconventional Serre relations can now be understood as arising
from two sources: the conditions for $\fg_{\pm 1}$ to be irreducible
$\fg_0$-modules; and the requirement that $[\fg_{\pm 1}, \fg_{\pm
1}]=L_{\pm 2}$ and similar requirements at other degrees.

Recall that Yamane \cite{Y2} used odd reflections \cite{Se} to find such
relations. Leites and collaborators \cite{LS, GL} used
homological algebra techniques and deduced relations from
certain spectral sequences.

The approach developed here is quite different from the
methods in \cite{GL, GLP, BGLL} and in \cite{Y1, Y2}
at both the conceptual and technical level.
It has the advantage of automatically
generating a complete set of relations that is minimal. Conceptually
the approach is quite transparent in the sense that one can see how
the defining relations arise. It also provides an alternative
approach to Serre's theorem for finite dimensional semi-simple Lie
algebras, see Remark \ref{Lie-algebras}.

We also note that the proof in \cite{GK} of the generalised Serre theorem
for Kac-Moody algebras with symmetrisable Cartan
matrices relied on structural properties of Verma modules such as
their embeddings, and also made use of the quadratic Casimir
operator. The authors of both \cite{Y2} and \cite{GL} commented on
obstacles in generalising the proof to Lie superalgebras, especially
difficulties related to the quadratic Casimir operator. We may also
add that one no longer has the properties of (generalised) Verma
modules required by \cite{GK} in the context of Lie superalgebras,
and this appears to be a more serious difficulty.

\subsubsection{}
The organisation of the paper is as follows.  Section
\ref{superalgebras} reviews Kac's classification of finite
dimensional simple classical Lie superalgebras \cite{Kac1}, and also
clarifies certain subtle points about Cartan matrices and Dynkin
diagrams in this context. Section \ref{presentations} contains the
statements of the main results, Theorem \ref{generating} and Theorem
\ref{Serre-g}, which give presentations of contragredient Lie
superalgebras in arbitrary root systems. The proof of Theorem
\ref{generating}, which implies Theorem \ref{Serre-g} as a
corollary, is given by using the key lemma, Lemma \ref{key}.
Sections \ref{proof-distinguished} and \ref{proof-non-distinguished}
are devoted to the proof of the key lemma. An outline of the proof
is given in Section \ref{comments-proof} to explain its conceptual
aspects. We end the paper with a discussion of possible
generalisation of the method developed here to affine Kac-Moody
superalgebras to construct Serre type presentations in Section
\ref{affine}.

Two appendices are also included. Appendix \ref{Dynkin-diagrams}
gives the root systems and Dynkin diagrams of all simple contragredient Lie
superalgebras \cite{Kac1, FSS, CCLL}. The material is used throughout the paper,
and is also necessary in order
to make precise the description of Dynkin diagrams in non-distinguished
root systems.
Appendix \ref{modules} describes the structure of some generalised
Verma modules of lowest weight type and their irreducible quotients,
which enter the proof of Lemma \ref{key}.

\subsection*{Acknowledgement}
I wish to thank Professor Dimitry Leites for helpful suggestions.

\section{Finite dimensional simple Lie superalgebras}\label{superalgebras}

In this section, we present some background material,
and clarify some tricky points about Cartan matrices and Dynkin
diagrams of Lie superalgebras.

\subsection{Finite dimensional simple Lie superalgebras}

We work over the field $\C$ of complex numbers throughout the paper.

\subsubsection{Classification}

A Lie superalgebra $\fg$ is a $\Z_2$-graded vector space
$\fg=\fg_\0\oplus\fg_\1$ endowed with a bilinear map $[\ , \  ]:
\fg\times\fg\longrightarrow \fg$, $(X, Y)\mapsto [X, Y]$, called the
Lie superbracket, which is homogeneous of degree $0$, graded
skew-symmetric and satisfies the super Jacobian identity. The even
subspace $\fg_\0$ of a Lie superalgebra $\fg=\fg_\0\oplus\fg_\1$ is
a Lie algebra in its own right, which is called the even subalgebra
of $\fg$. The odd subspace $\fg_\1$ forms a $\fg_\0$-module under
the restriction of the adjoint action defined by the Lie
superbracket. If $\fg_\0$ is a reductive Lie algebra and $\fg_\1$ is
a semi-simple $\fg_\0$-module, $\fg$ is called {\em classical}
\cite{Kac1, Sch79}.

The classification of the finite dimensional simple
Lie superalgebras was completed in the late 70s.
The theorem below is taken from \cite{Kac1},
which is still the best reference on Lie superalgebras.
Historical information and further references on
the classification can be found in \cite{K, K-collect} (also see \cite{Sch79}).

\begin{theorem}\label{classification}
The finite dimensional simple classical Lie superalgebras
comprise of the simple contragredient Lie superalgebras
\[
\begin{aligned}
& A(m, n), \quad  B(0, n), \quad   B(m, n), \ m>0,
\quad C(n), \  n>2, \quad  D(m, n), \  m>1, \\
&F(4),  \quad  G(3),  \quad D(2, 1; \alpha), \ \alpha\in\C\backslash\{0, -1\},
\end{aligned}
\]
and simple strange Lie superalgebras $P(n)$ and $Q(n)$ ($n\ge 1$).
\end{theorem}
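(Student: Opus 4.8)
The plan is to classify the simple classical $\fg=\fg_\0\oplus\fg_\1$ directly from the defining data, extracting structure from simplicity. First I would record the immediate consequences: since the kernel of the $\fg_\0$-action on $\fg_\1$ is an ideal of $\fg$, that action is faithful; and since $[\fg_\1,\fg_\1]$ is an ideal of $\fg_\0$ whose sum with $\fg_\1$ is an ideal of $\fg$, simplicity forces $[\fg_\1,\fg_\1]=\fg_\0$. Because brackets of odd elements are graded-symmetric, the superbracket restricts to a $\fg_\0$-equivariant symmetric map $\beta\colon S^2\fg_\1\to\fg_\0$, and the only piece of the super Jacobi identity not already encoded in ``$\fg_\0$ reductive, $\fg_\1$ a module, $\beta$ equivariant'' is the cubic constraint $[[x,x],x]=0$ for $x\in\fg_\1$. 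The whole problem thus reduces to classifying pairs $(\fg_\0,\fg_\1)$ — a reductive Lie algebra with a faithful completely reducible module — carrying a surjective symmetric equivariant $\beta$ satisfying this cubic identity.

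Next I would set up the fundamental dichotomy. Surjectivity of $\beta$ together with nondegeneracy of the induced pairing forces $\fg_\1$ to be self-dual as a $\fg_\0$-module; combining this with complete reducibility and an analysis of how the center of $\fg_\0$ acts shows that $\fg_\1$ is either $\fg_\0$-irreducible (Type II) or a sum $\fg_1\oplus\fg_{-1}$ of two mutually contragredient irreducibles (Type I). In Type I I would produce a central grading element $z\in\fg_\0$ acting as $\pm1$ on $\fg_{\pm1}$, giving a consistent $\Z$-grading $\fg=\fg_{-1}\oplus\fg_0\oplus\fg_1$ with $[\fg_1,\fg_1]=[\fg_{-1},\fg_{-1}]=0$; here the cubic constraint is automatic, and the classification collapses to listing reductive $\fg_0$ with an irreducible faithful module $V=\fg_1$ for which the multiplication $V\otimes V^{*}\twoheadrightarrow\fg_0$ is onto. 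Running through the reductive Lie algebras and their fundamental representations yields $A(m,n)$, $C(n)$, and the strange series $P(n)$.

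In Type II there is no grading element: $\fg_\1$ is self-dual irreducible, $\beta$ is an essentially unique nonzero symmetric invariant map $S^2\fg_\1\to\fg_\0$, and now $[[x,x],x]=0$ is a genuine restriction — it is the vanishing of a cubic covariant of the representation, factoring through a specific $\fg_\0$-submodule of $\fg_\1^{\otimes 3}$. I would translate this into a highest-weight condition cutting the admissible self-dual irreducibles down to a short list, producing $B(m,n)$, $D(m,n)$, $Q(n)$, the exceptionals $F(4)$ and $G(3)$, and — in the distinguished case where $\fg_\0=\mathfrak{sl}_2\oplus\mathfrak{sl}_2\oplus\mathfrak{sl}_2$ acts on the triple tensor product of its natural modules — a pencil of admissible $\beta$'s yielding the one-parameter family $D(2,1;\alpha)$. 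Finally, for each surviving pair one must check that $\beta$ genuinely defines a Lie superalgebra, that it is simple, and that distinct list entries are non-isomorphic (tracking the continuous invariant of $D(2,1;\alpha)$).

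The hard part will be the Type II cubic constraint. Verifying $[[x,x],x]=0$ is not formal; it is exactly this condition that both reduces the admissible representations to a finite list and forces the appearance of the exceptional algebras and the continuous family. Organizing this vanishing uniformly, rather than as a long and error-prone case check over all reductive $\fg_\0$ and their self-dual irreducibles, is where the real work lies and where omissions are most likely.
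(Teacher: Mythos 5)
The paper does not prove Theorem \ref{classification}: it is quoted directly from Kac \cite{Kac1} and used as an input, so there is no in-paper argument to compare against. Your outline does follow the broad shape of Kac's original proof (reduce to pairs $(\fg_\0,\fg_\1)$ with an equivariant surjection $S^2\fg_\1\to\fg_\0$ subject to the cubic Jacobi constraint, then split according to the reducibility of $\fg_\1$), and the preliminary reductions in your first paragraph are correct. But two structural steps after that are wrong as stated, and the step carrying essentially all of the content is left as a plan.

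First, the dichotomy. You derive self-duality of $\fg_\1$ from ``nondegeneracy of the induced pairing,'' but no nondegenerate invariant pairing exists in general: for $P(n)$ one has $\fg_\0\cong\mathfrak{sl}_{n+1}$ and $\fg_\1\cong S^2V\oplus\Lambda^2V^*$ with $V=\C^{n+1}$, which is neither self-dual nor a sum of two mutually contragredient irreducibles, even though $[\fg_\1,\fg_\1]=\fg_\0$. Your dichotomy would exclude $P(n)$ at the outset, yet your conclusion lists it as a Type I output --- the argument is internally inconsistent precisely at the strange algebras the theorem must capture. (Self-duality is available only for the basic algebras, i.e.\ once one assumes the nondegenerate invariant form whose absence is exactly what separates $P(n)$ and $Q(n)$ from the contragredient list; Kac's proof that $\fg_\1$ has at most two irreducible components does not go through duality.) Second, the cubic constraint is not automatic in Type I: for mixed $x=x_++x_-$ it polarizes to $[[x_+,x_-],y_+]+[[y_+,x_-],x_+]=0$, a genuine restriction on the pair $(\fg_0,\fg_1)$ which holds for the standard modules underlying $A(m,n)$, $C(n)$, $P(n)$ and fails for most other candidates; it is precisely what makes the Type I list finite, so it cannot be waved away. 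Finally, the reduction of the Type II condition $[[x,x],x]=0$ to ``a highest-weight condition cutting the admissible self-dual irreducibles down to a short list'' is the bulk of Kac's classification, including the separate analysis that produces the one-parameter family $D(2,1;\alpha)$ and the verification that nothing else survives; you correctly identify this as the hard part but give no mechanism for carrying it out. As it stands the proposal is a roadmap in the right spirit with two broken signposts, not a proof.
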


The simple contragredient Lie superalgebras admit non-degenerate
invariant bilinear forms, while the strange Lie superalgebras $P(n)$
and $Q(n)$ do not. In the remainder of the paper, we shall consider
only contragredient simple Lie superalgebras.

The $A$, $B$, $C$ and $D$ series are essentially the special linear
and orthosymplectic Lie superalgebras, which are familiar examples
of Lie superalgebras. The exceptional Lie superalgebras $F(4), G(3)$
and $D(2, 1; \alpha)$ are less well-known, but one can understand their structures
given the description of their roots in Appendix \ref{roots}.

Let $\fg=\fg_\0\oplus\fg_\1$ be a simple contragredient Lie
superalgebra, and choose a Cartan subalgebra $\fh$ for $\fg$, which
by definition is just a Cartan subalgebra of $\fg_\0$. Denote by
$\fg_\alpha$ the root space of the root $\alpha$, and call $\alpha$
even (resp. odd) if $\fg_\alpha\subset \fg_\0$ (resp.
$\fg_\alpha\subset \fg_\1$). Denote by $\Delta_0$ and $\Delta_1$ the
sets of the even and odd roots respectively, and set
$\Delta=\Delta_0\cup\Delta_1$. Let $(\ , \ ):
\fh^*\times\fh^*\longrightarrow \C$ denote the Weyl
group invariant non-degenerate symmetric bilinear form on $\fh^*$, where
the Weyl group of $\fg$ is by definition the Weyl group of $\fg_\0$.
A root $\beta$ will be called isotropic if $(\beta, \beta)=0$. Note
that all isotropic roots are odd.

A Borel subalgebra of $\fg$ is a maximal soluble Lie super
subalgebra containing a Borel subalgebra of $\fg_\0$. A new feature
in the present context is that Borel subalgebras are not always
conjugate under the Weyl groups. All the conjugacy classes of Borel
subalgebras were given in \cite[pp. 51-52]{Kac1} \cite[Proposition
1.2]{Kac2}. In particular, Kac described a particularly convenient
Borel subalgebra, which he called distinguished, for each simple
contragredient Lie superalgebra.  We shall call a root system with
the set of simple roots determined by this Borel subalgebra the {\em
distinguished root system}. In this case, there exists only one odd
simple root.

\subsubsection{Cartan matrices and Dynkin diagrams}

The precise forms of the Cartan matrices and Dynkin diagrams will be crucial in
Section \ref{presentations}. However, there do not exist canonical definitions
for them in the Lie superalgebra setting, thus we spell out the details of
our definitions here.

Let $\Pi_\fb=\{\alpha_1, \alpha_2, \dots, \alpha_r\}$ be the set of
simple roots of a simple contragrediant Lie superalgebra $\fg$
relative to a Borel subalgebra $\fb$. The Cartan matrix and Dynkin
diagram provide a convenient way to describe $\Pi_\fb$. We define a
Cartan matrix in the following way.  Denote by $\Theta\subset\{1, 2,
\dots, r\}$ the subset such that $\alpha_t\in\Delta_1$  for all
$t\in\Theta$. Let $l_m^2$ be the minimum of $|(\beta, \beta)|$ for
all non-isotropic $\beta\in\Delta$ if $\fg\ne D(2, 1; \alpha)$. If
$\fg$ is $D(2, 1; \alpha)$, let $l_m^2$ be the minimum of all
$|(\beta, \beta)|>0$ ($\beta\in\Delta$), which are independent of
the arbitrary parameter $\alpha$. Let
\[
\begin{aligned}
\kappa&=\left\{\begin{array}{l l} 0, &\text{if $\fg$ is of type $B$},\\
1, &\text{ otherwise};
\end{array}
\right.\quad\quad
d_i&=\left\{\begin{array}{l l} \frac{(\alpha_i,
\alpha_i)}{2}, &\text{if\ } (\alpha_i,
\alpha_i)\ne 0,\\
\frac{l_m^2}{2^\kappa}, &\text{if\ } (\alpha_i, \alpha_i)= 0.
\end{array}
\right.
\end{aligned}
\]
Introduce the matrices
\[
\begin{aligned}
B&=(b_{i j})_{i, j=1}^r, \quad b_{i j} = (\alpha_i, \alpha_j),\\
D&=\text{diag}(d_1, \dots, d_r),
\end{aligned}
\]
then the Cartan matrix $A$ associated to the set of simple
roots $\Pi_\fb$ is defined by
\begin{eqnarray}
A=D^{-1} B.
\end{eqnarray}
When it is necessary to indicate the dependence on $\Theta$, we
write $(A, \Theta)$ for the Cartan matrix.

Note that if $\alpha_i$ is non-isotropic, $a_{i t}=\frac{2(\alpha_i,
\alpha_t)}{(\alpha_i, \alpha_i)}$ is a non-positive integer for all
$t$. However, if $\alpha_t$ is isotropic, then $a_{t j} =
\frac{2}{l_m^2}(\alpha_t, \alpha_j)$ can be an integer of any sign
or zero (except in type $D(2, 1; \alpha)$). If $b_{i j}\ne 0$, we
define
\begin{eqnarray}
sgn_{i j} = \text{sign of $b_{i j}$}.
\end{eqnarray}
As we shall see in Section \ref{problems-Dynkin-diagrams}, these
signs provide the additional information required to recover a
Cartan matrix from its Dynkin diagram.

\begin{remark}
Our definition of the Cartan matrix differs from the usual one due
to Kac \cite{Kac1}. In Kac's definition,  if $b_{s s}=0$, then
$d_s=(\alpha_s, \alpha_{s+k})$ for the smallest $k$ such that
$d_s\ne 0$. Note that in our definition, none of the signs $sgn_{i
j}$ is lost.
\end{remark}

The Dynkin diagram associated with $(A, \Theta)$ consists of $r$
nodes, which are connected by lines. The $i$-th node is coloured
white if $i\not\in\Theta$, black if $i\in\Theta$ but $\alpha_i$ is
not isotropic, and grey if $\alpha_i$ is isotropic.

If $(A, \Theta)$  is of type $D(2, 1; \alpha)$, the Dynkin diagram is
obtained by simply connecting the $i$-th and $j$-th nodes by one
line if $a_{i j}\ne 0$ and write $b_{i j}$ at the line.

In all other cases, we join the $i$-th
and $j$-th nodes by $n_{ij}$ lines, where
\[
\begin{aligned}
&n_{i j} = {\rm{max}}(|a_{i j}|, |a_{j i}|), &\quad \text{if $a_{i i} + a_{j j}\ge 2$};\\
&n_{i j} = |a_{i j}|, &\quad \text{if $a_{i i}=a_{j j}=0$}.
\end{aligned}
\]
When the $i$-th and $j$-th nodes are not both grey, say, the $i$-th
one is not grey, and connected by more than one lines, we draw an
arrow pointing to the $j$-th node if $-a_{i j}=1$ and pointing to
the $i$-th node if $-a_{i j}>1$.

The Dynkin diagrams of the simple contragredient Lie superalgebras
are given in the tables in Appendix \ref{append-diagrams}.

\subsection{Comments on Dynkin diagrams}\label{problems-Dynkin-diagrams}

From the Cartan matrices in our definition, one can recover the
corresponding root systems. Dynkin diagrams also uniquely represent
Cartan matrices, except in the cases of $\mathfrak{osp}_{4|2}$ and
$\mathfrak{sl}_{2|2}$. The Dynkin diagrams of these superalgebras
relative to the distinguished root systems are exactly the same, but
the two Lie superalgebras are non-isomorphic.

This problem can be resolved by incorporating the signs $sgn_{i j}$
into the Dynkin diagram, e.g., by placing $sgn_{i j}$ at the line(s)
connecting two grey nodes $i$ and $j$. Then the modified Dynkin
diagram are respectively given by
\begin{eqnarray}\label{sl22-osp42}
%
\begin{picture}(120, 20)(-20, 5)
\put(-40, 10){$\mathfrak{sl}_{2|2}$:}
\put(0, 10){\circle{10}}
\put(5, 10){\line(1, 0){20}}
{\color{gray} \put(30, 10){\circle*{10}}}
\put(35, 10){\line(1, 0){20}}
\put(60, 10){\circle{10}}

\put(10, 13){\tiny $-$}
\put(40, 13){\tiny $+$}
\put(68, 5){,}
\end{picture}
\begin{picture}(120, 20)(-50, 5)
\put(-50, 10){$\mathfrak{osp}_{4|2}$:}
\put(0, 10){\circle{10}}
\put(5, 10){\line(1, 0){20}}
{\color{gray} \put(30, 10){\circle*{10}}}
\put(35, 10){\line(1, 0){20}}
\put(60, 10){\circle{10}}

\put(10, 13){\tiny $-$}
\put(40, 13){\tiny $-$}
\put(68, 5){.}
\end{picture}
\end{eqnarray}
As we shall see, the signs enter the construction of higher order Serre relations.

In this paper we did not include the additional information of these
signs in the definition of Dynkin diagrams, as they would make the
diagrams look cumbersome. Also, there is no ambiguity about the
signs in all the other Dynkin diagrams.

Similar signs were also discussed in \cite{Y2}.

Recall that if we remove a subset of vertices (i.e., nodes) and all
the edges connected to these vertices from a Dynkin diagram of a
semi-simple Lie algebra, we obtain the Dynkin diagram of another
semi-simple Lie algebra of a smaller rank. This corresponds to
taking regular subalgebras. In the context Lie superalgebras, the
notion of regular subalgebras still exists, but some explanation is
required at the level of Dynkin diagrams.

\begin{definition}
Call a sub-diagram $\Gamma'$ of a Dynkin diagram $\Gamma$
{\em full} if for any two nodes $i$ and $j$ in $\Gamma'$, the edges
between them in $\Gamma$, the arrows on the edges, and also the $b_{i j}$ labels of the edges
when $\Gamma$ is of type $D(2, 1; \alpha)$, are all present in
$\Gamma'$.
\end{definition}

Consider for example the Dynkin diagram
\begin{center}
\begin{picture}(75, 45)(-5,-20)
\put(5, 0){\circle{10}}
\put(10, 1){\line(1, 0){20}}
\put(10, -1){\line(1, 0){20}}
\put(20, -3.5){$>$}
{\color{gray}\put(35, 0){\circle*{10}}}
\put(58, 19){\line(-1, -1){18}}
\put(58, 15){\line(-1, -1){16}}
\put(58, -19){\line(-1, 1){18}}
{\color{gray}\put(62, 15){\circle*{10}}}
\put(60,-10){\line(0, 1){19}}
\put(62,-10){\line(0, 1){19}}
\put(64,-10){\line(0, 1){19}}
{\color{gray}\put(62, -16){\circle*{10}}}
\end{picture}
\end{center}
of $F(4)$, which has the following full sub-diagrams beside others:
\begin{eqnarray}\label{nonstandard}
\begin{picture}(150, 30)(0,-10)
\put(5, 0){\circle{10}}
\put(10, 1){\line(1, 0){20}}
\put(10, -1){\line(1, 0){20}}
\put(20, -3.5){$>$}
{\color{gray}\put(35, 0){\circle*{10}}}
\put(40, 1){\line(1, 0){20}}
\put(40, -1){\line(1, 0){20}}
{\color{gray}\put(65, 0){\circle*{10}}}
\put(70, -3){,}

{\color{gray}\put(100, 0){\circle*{10}}} \put(105, 2){\line(1,
0){20}} \put(105, 0){\line(1, 0){20}} \put(105, -2){\line(1, 0){20}}
{\color{gray}\put(130, 0){\circle*{10}}} \put(135, -3){.}
\end{picture}
\end{eqnarray}
Note that none of these appears in Tables 1 and 2.

The reason is that the sub-matrices in the Cartan matrix of $F(4)$
associated with these full sub-diagrams are not Cartan matrices in
the strict sense. The problem lies in the definition of $a_{i j}$
when the node $i$ is grey, which involves the number $\l_m$. The
$\l_m$ for $F(4)$ is not the correct ones for the full sub-diagrams.
By properly renormalising the bilinear forms on the weight spaces
associated with them, the full sub-diagrams can be cast into the
form
\begin{center}
\begin{picture}(150, 25)(0,-10)
\put(5, 0){\circle{10}}
\put(10, 0){\line(1, 0){20}}
{\color{gray}\put(35, 0){\circle*{10}}}
\put(40, 0){\line(1, 0){20}}
{\color{gray}\put(65, 0){\circle*{10}}}
\put(70, -3){,}

{\color{gray}\put(100, 0){\circle*{10}}}
\put(105, 0){\line(1, 0){20}}
{\color{gray}\put(130, 0){\circle*{10}}}
\put(135, -3){,}
\end{picture}
\end{center}
which are respectively Dynkin diagrams for $\mathfrak{sl}_{3|1}$ and
$\mathfrak{sl}_{2|1}$.

We call the Dynkin diagrams in Table 1 and Table 2 standard, and the ones
like those in \eqref{nonstandard} non-standard.

We mention that if a Lie superalgebra $\fg$ is contained as a
regular subalgebra in another Lie superalgebra, defining relations
of $\fg$ can in principle be extracted from relations of the latter
by considering sub-diagrams of Dynkin diagrams. However, this
involves subtleties,  as we have just discussed, and requires more care
than hitherto exercised in the literature.

\section{Presentations of Lie superalgebras}\label{presentations}

In this section, we generalise Serre's theorem for semi-simple Lie
algebras to contragredient Lie superalgebras,  obtaining
presentations for the Lie superalgebras in terms of Chevalley
generators and defining relations.

\subsection{An auxiliary Lie superalgebra}
We start by defining an auxiliary Lie superalgebra following the
strategy of \cite{Kac3}. Let $(A, \Theta)$ with $A=(a_{i j})_{i, j=1}^r$ be the
Cartan matrix of one of the simple contragredient Lie superalgebras
relative to a given Borel subalgebra $\fb$. Let $\Pi_\fb$ be the set of simple roots
relative to this Borel subalgebra.

\begin{definition}\label{auxiliary}
Let $\tilde\fg(A, \Theta)$ be the Lie superalgebra generated by
homogeneous generators $e_i, f_i, h_i$ ($i=1, 2, \dots, r$), where
$e_s$, $f_s$ for all $s\in\Theta$ are odd while the rest are even,
subject to the following relations
\begin{eqnarray}  \label{g-tilde}
\begin{aligned}
&{[}h_i, h_j] =0, \\
&[h_i, e_j] =a_{i j} e_j, \quad [h_i, f_j] =-a_{i j} f_{j}, \\
&[e_i, f_j] =\delta_{i j} h_i, \quad \forall i, j.
\end{aligned}
\end{eqnarray}
\end{definition}

Let $\tilde\fn^+$ (resp. $\tilde\fn^-$) be the subalgebra
generated by all $e_i$ (resp. all $f_i$) subject to the relevant
relations, and $\fh=\oplus_{i=1}^r\C\fh_i$, the Cartan subalgebra.
Then it is well known and easy to prove  (following the reasoning of \cite[\S 1]{Kac3}) that
$\tilde\fg(A, \Theta) =\tilde\fn^+\oplus \fh \oplus \tilde\fn^-$.
The Lie superalgebra is  graded
$\tilde\fg(A, \Theta)=\oplus_{\nu\in Q}\tilde\fg_\nu$ by $Q=\Z\Pi_\fb$, with $\tilde\fg_0=\fh$. Note hat
$\tilde\fn^+_{\nu}$ (rep. $\tilde\fn^-_{-\nu}$) is zero unless $\nu\in Q_{\N}$, where $\N=\{1, 2, \dots \}$
and $Q_\N=\N\Pi_\fb$, that is,
\begin{eqnarray}  \label{triangular}
\tilde\fn^+ = \oplus_{\nu\in Q_\N} \tilde\fn^+_{\nu}, \quad \tilde\fn^- = \oplus_{\nu\in Q_\N} \tilde\fn^-_{-\nu}.
\end{eqnarray}

Let $\fr(A, \Theta)$ be the maximal $\Z_2$-graded ideal of
$\tilde\fg(A, \Theta)$ that intersects $\fh$ trivially. Set
$\fr^{\pm}=\fr(A, \Theta)\cap\tilde\fn^{\pm}$. Then
$\fr(A, \Theta)=\fr^+\oplus\fr^-$. The following fact follows from
the maximality of $\fr(A, \Theta)$.
\begin{lemma}\label{ideal-inclusion}
Let $\Sigma=\Sigma^+\cup\Sigma^-$ with $\Sigma^\pm \subset
\tilde\fn^\pm$ be a subset of $\tilde\fg(A, \Theta)$ consisting of
homogeneous elements. If $[f_i, \Sigma^+]\subset\C\Sigma^+$ and
$[e_i, \Sigma^-]\subset\C\Sigma^-$ for all $i$, then $\Sigma \subset
\fr(A, \Theta)$.
\end{lemma}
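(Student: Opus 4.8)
The plan is to treat $\Sigma^+$ and $\Sigma^-$ separately and symmetrically, and to exploit the defining property of $\fr(A,\Theta)$ as the \emph{maximal} $\Z_2$-graded ideal meeting $\fh$ trivially. Concretely, I would exhibit a $\Z_2$-graded ideal $W^+$ of $\tilde\fg(A,\Theta)$ with $\Sigma^+\subset W^+\subset\tilde\fn^+$; since then $W^+\cap\fh=0$, maximality of $\fr(A,\Theta)$ forces $W^+\subset\fr(A,\Theta)$, and hence $\Sigma^+\subset\fr(A,\Theta)$. The analogous construction for $\Sigma^-$ inside $\tilde\fn^-$ gives $\Sigma^-\subset\fr(A,\Theta)$, and the two together yield $\Sigma\subset\fr(A,\Theta)$.

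To build $W^+$, I let $W^+$ be the linear span of all elements $[e_{i_1},[e_{i_2},[\cdots,[e_{i_k},\sigma]\cdots]]]$ with $k\ge 0$, $\sigma\in\Sigma^+$, and $i_1,\dots,i_k\in\{1,\dots,r\}$. By construction $W^+$ contains $\Sigma^+$ (take $k=0$), is spanned by homogeneous elements (as $\Sigma^+$ is homogeneous and each $\mathrm{ad}\,e_i$ preserves parity and shifts the $Q$-degree by $\alpha_i$), hence is $\Z_2$-graded, and it lies in $\tilde\fn^+$ because every $e_i$ and every $\sigma\in\Sigma^+\subset\tilde\fn^+$ has degree in $Q_\N$. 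It is manifestly stable under $\mathrm{ad}\,e_i$, and stable under $\mathrm{ad}\,h_i$ since each spanning element is a $Q$-homogeneous eigenvector of $\mathrm{ad}\,h_i$.

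The crux is to check that $W^+$ is also stable under $\mathrm{ad}\,f_i$ for every $i$; granting this, the normaliser $\{x\in\tilde\fg(A,\Theta):[x,W^+]\subset W^+\}$ is a subalgebra containing all generators $e_i,f_i,h_i$, hence equals $\tilde\fg(A,\Theta)$, so $W^+$ is an ideal. I would prove $\mathrm{ad}\,f_i$-stability by induction on the length $k$. For $k=0$ this is exactly the hypothesis $[f_i,\Sigma^+]\subset\C\Sigma^+\subset W^+$, which is what prevents $[f_i,\sigma]$ from acquiring a component in $\fh$ when $\deg\sigma=\alpha_i$. For the inductive step, writing a length-$k$ element as $[e_{i_1},u]$ with $u\in W^+$ of length $k-1$, the super Jacobi identity gives $[f_i,[e_{i_1},u]]=[[f_i,e_{i_1}],u]\pm[e_{i_1},[f_i,u]]$. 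Here $[f_i,e_{i_1}]\in\C h_i$ by \eqref{g-tilde}, so the first term is a scalar multiple of $[h_i,u]\in W^+$; and $[f_i,u]\in W^+$ by the induction hypothesis, so the second term lies in $\mathrm{ad}(e_{i_1})W^+\subset W^+$. Thus $[f_i,[e_{i_1},u]]\in W^+$, completing the induction.

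The only delicate point is the sign bookkeeping in the super Jacobi identity together with the verification that $[f_i,e_j]$ always lands in $\C h_i$ (it is $0$ for $i\ne j$ and a multiple of $h_i$ for $i=j$, the parity of $e_i$ fixing the sign), so that no degree-zero contribution escapes $W^+$; this, with the base-case hypothesis, is precisely what confines the whole construction to $\tilde\fn^+$. I do not anticipate a serious obstacle beyond this routine verification: once $W^+$ is an ideal with $W^+\subset\tilde\fn^+$, the trivial intersection with $\fh$ and the maximality of $\fr(A,\Theta)$ close the argument, and the $\Sigma^-$ case is identical with the roles of $e_i$ and $f_i$ interchanged and $\tilde\fn^-$ in place of $\tilde\fn^+$.
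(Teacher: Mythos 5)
Your proposal is correct and follows essentially the same route as the paper: the paper's proof simply observes that the ideal generated by $\fr(A,\Theta)\cup\Sigma$ still intersects $\fh$ trivially and invokes maximality, which is exactly your argument with the verification (the iterated $\mathrm{ad}\,e_i$ spanning set, the super Jacobi induction showing $\mathrm{ad}\,f_i$-stability, and the containment in $\tilde\fn^+\oplus\tilde\fn^-$) spelled out explicitly rather than left implicit. Your write-up supplies precisely the details the paper omits, and no gap remains.
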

\begin{proof}
The given conditions on $\Sigma$ imply that the ideal generated by
$\fr(A, \Theta)\cup\Sigma$ intersects $\fh$ trivially, hence must be
equal to $\fr(A, \Theta)$ by the maximality of the latter.
\end{proof}
In particular, if $X^\pm\in\tilde\fn^\pm$ satisfy $[f_i, X^+]=0$,
and $[e_i, X^-]=0$ for all $i$, then they belong to $\tilde\fn^\pm$
respectively.

Let us define the Lie superalgebra
\[
L(A, \Theta) := \frac{\tilde\fg(A, \Theta)}{\fr(A, \Theta)}.
\]
We have the following result.
\begin{theorem}\label{LA}
Let $\fg$ be a finite dimensional simple contragredient Lie
superalgebra, and let $(A, \Theta)$ be the Cartan matrix of $\fg$
relative to a given Borel subalgebra. Then $L(A, \Theta)$ is
isomorphic to $\fg$ unless $\fg=A(n, n)$, and in the latter case
$L(A, \Theta)\cong \mathfrak{sl}_{n+1|n+1}$.
\end{theorem}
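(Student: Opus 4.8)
The plan is to realise $L(A,\Theta)$ as the image of a canonical surjection from $\tilde\fg(A,\Theta)$ onto $\fg$ (or onto $\mathfrak{sl}_{n+1|n+1}$ in type $A(n,n)$), and then to identify its kernel with the maximal ideal $\fr(A,\Theta)$. Write $\fg'$ for $\fg$ when $\fg\ne A(n,n)$ and for $\mathfrak{sl}_{n+1|n+1}$ when $\fg=A(n,n)$. First I would check that the standard Chevalley generators $e_i,f_i,h_i$ of $\fg'$ satisfy the quadratic relations \eqref{g-tilde} for the given Cartan matrix $(A,\Theta)$; these relations record only the root-space grading, the commutativity $[\fh,\fh]=0$, and the normalisation $[e_i,f_j]=\delta_{ij}h_i$, all of which hold by construction (using the explicit root data of Appendix \ref{roots}). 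By the universal property in Definition \ref{auxiliary} this yields a homomorphism $\pi:\tilde\fg(A,\Theta)\to\fg'$ matching generators to generators, and $\pi$ is surjective because $\fg'$ is generated by its Chevalley generators. The reason the target in type $A(n,n)$ must be $\mathfrak{sl}_{n+1|n+1}$ rather than the simple $\mathfrak{psl}_{n+1|n+1}\cong A(n,n)$ is that the Cartan matrix of $A(n,n)$ is degenerate, with a left null vector $c$: in $\mathfrak{psl}_{n+1|n+1}$ the combination $\sum_i c_i h_i$ vanishes, so $\pi|_\fh$ would fail to be injective, whereas in $\mathfrak{sl}_{n+1|n+1}$ the same combination equals the nonzero central element (the identity matrix), the coroots $h_i$ remain linearly independent, and $\pi|_\fh$ is injective.

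Next I would locate $\ker\pi$ relative to $\fr(A,\Theta)$. Since $\pi|_\fh$ is injective (the $h_i$ map to linearly independent coroots of $\fg'$, as just noted), $\ker\pi$ is a $\Z_2$-graded ideal with $\ker\pi\cap\fh=0$, so maximality of $\fr(A,\Theta)$ gives $\ker\pi\subseteq\fr(A,\Theta)$. This already induces a surjection $\fg'\cong\tilde\fg(A,\Theta)/\ker\pi\twoheadrightarrow L(A,\Theta)$ whose kernel is the graded ideal $\fr(A,\Theta)/\ker\pi$ of $\fg'$; this ideal meets the image of $\fh$ trivially, and $L(A,\Theta)\ne 0$ since $\fh$ embeds into it. It therefore remains to prove that $\fg'$ admits no nonzero graded ideal meeting the Cartan trivially, which forces $\fr(A,\Theta)=\ker\pi$ and hence $L(A,\Theta)\cong\fg'$.

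For $\fg\ne A(n,n)$ this final step is immediate: $\fg'=\fg$ is simple, its only ideals are $0$ and $\fg$, and $L(A,\Theta)\ne 0$ forces $\fr(A,\Theta)/\ker\pi=0$. The main obstacle is type $A(n,n)$, where $\fg'=\mathfrak{sl}_{n+1|n+1}$ is not simple but carries a one-dimensional centre $Z\subset\fh$ with $\mathfrak{sl}_{n+1|n+1}/Z\cong A(n,n)$ simple. Here I would argue as follows: let $\mathfrak{i}$ be a nonzero graded ideal with $\mathfrak{i}\cap\fh=0$, and consider its image in the simple quotient. If the image is $0$ then $\mathfrak{i}\subseteq Z\subseteq\fh$, contradicting $\mathfrak{i}\cap\fh=0$. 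If the image is everything then $\mathfrak{i}$ contains every root vector, because the root spaces of $\mathfrak{sl}_{n+1|n+1}$ coincide with those of its quotient; but then for a non-isotropic (even) root $\alpha$ the bracket $[e_\alpha,f_\alpha]$ is a nonzero element of $\fh\cap\mathfrak{i}$, again a contradiction. Hence no such $\mathfrak{i}$ exists, completing the identification $L(A,\Theta)\cong\mathfrak{sl}_{n+1|n+1}$.

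The delicate points I expect to need careful verification are the uniform well-definedness and surjectivity of $\pi$ across all root systems, and, in the $A(n,n)$ case, the bookkeeping of the centre $Z$ together with the matching of Cartan dimensions ($\dim\fh=r=2n+1$ equals the dimension of the Cartan subalgebra of $\mathfrak{sl}_{n+1|n+1}$, not of $\mathfrak{psl}_{n+1|n+1}$), which is precisely what pins down $\mathfrak{sl}_{n+1|n+1}$ as the correct target.
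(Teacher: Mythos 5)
Your proposal is correct, and in outline it follows the same route as the paper: realise the known Lie superalgebra as a quotient of $\tilde\fg(A,\Theta)$ and use maximality of $\fr(A,\Theta)$ together with simplicity to pin down the kernel. The difference is one of degree of detail. The paper's proof is a two-line appeal to Kac's classification, with the $A(n,n)$ case disposed of by observing that $\det A=0$ forces a one-dimensional centre in $L(A,\Theta)$ whose quotient is $A(n,n)$, "hence" $L(A,\Theta)\cong\mathfrak{sl}_{n+1|n+1}$ -- a step that tacitly identifies the relevant central extension. You instead construct the surjection $\pi:\tilde\fg(A,\Theta)\to\fg'$ explicitly, verify $\ker\pi\subseteq\fr(A,\Theta)$ via injectivity of $\pi|_\fh$ and maximality, and then -- this is the genuinely different part -- prove directly that $\mathfrak{sl}_{n+1|n+1}$ has no nonzero $\fh$-stable graded ideal meeting its Cartan trivially, by passing to the simple quotient $\mathfrak{psl}_{n+1|n+1}$ and extracting a nonzero coroot $[e_\alpha,f_\alpha]$ from a non-isotropic root. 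This is cleaner and more self-contained than the paper's central-extension shortcut, at the cost of some routine verifications (the normalisation $[e_i,f_i]=h_i$ against the paper's nonstandard convention $A=D^{-1}B$, and generation of $\fg'$ by Chevalley generators in every root system) which you correctly flag as the points needing care. One small point worth making explicit in a final write-up: the containment $\ker\pi\subseteq\fr(A,\Theta)$ uses that $\fr(A,\Theta)$ contains \emph{every} graded ideal meeting $\fh$ trivially, which rests on the triangular decomposition \eqref{triangular} ensuring that the sum of all such ideals still avoids $\fh$; the paper assumes this standard fact implicitly.
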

\begin{proof}
This follows from Kac's classification \cite{Kac1} of the simple
contragredient Lie superalgebras (see Theorem \ref{classification})
except in the case of $A(n, n)$. In the latter case, we have
$\det{A}=0$. Therefore, $L(A, \Theta)$ contains a
$1$-dimensional center, and the quotient of $L(A, \Theta)$ by the
center is $A(n, n)$. Hence $L(A, \Theta)$ is isomorphic to
$\mathfrak{sl}_{n+1|n+1}$.
\end{proof}

\subsection{Main theorem}
\subsubsection{Standard and higher order Serre elements}

Let us first define some elements of $\tilde\fg(A, \Theta)$, which
will play a crucial role in studying the presentation of Lie
superalgebras.

Call the following elements the {\em standard Serre elements}:
\begin{eqnarray*}
\begin{aligned}
&(ad_{e_i})^{1-a_{i j}}(e_j), \quad
(ad_{f_i})^{1-a_{i j}}(f_j),
    \quad \text{for $i\ne j$, with $a_{i i}\ne 0$ or $a_{i j}=0$};\\
&[e_s, e_s], \quad  [f_s, f_s],
   \quad \text{for $a_{ss}=0$}.
\end{aligned}
\end{eqnarray*}
We also introduce {\em higher order Serre elements} if the Dynkin
diagram of $(A, \Theta)$ contains full sub-diagrams of the following
kind:
\begin{enumerate}
\item \label{case-1-ideal}
\begin{picture}(75, 15)(0, 7)
\put(10, 6.5){$\times$}
\put(15, 10){\line(1, 0){20}}
{\color{gray} \put(40, 10){\circle*{10}} }
\put(45, 10){\line(1, 0){20}}
\put(62, 6.5){$\times$}
\put(10, 16){\tiny $j$}
\put(40, 16){\tiny $t$}
\put(63, 16){\tiny $k$}
\end{picture}
with $sgn_{j t}sgn_{t k}=-1$, the
associated higher order Serre elements are
\[
[e_t, [e_j, [e_t, e_k]]],  \quad [f_t, [f_j, [f_t, f_k]]];
\]

\item \label{case-2-ideal}
\begin{picture}(80, 20)(0, 7)
\put(10, 6.5){$\times$}
\put(15, 10){\line(1, 0){20}}
{\color{gray}\put(40, 10){\circle*{10}} }
\put(45, 11){\line(1, 0){20}}
\put(45, 9){\line(1, 0){20}}
\put(55, 7){$>$} \put(70,
10){\circle{10}} \put(10, 16){\tiny $j$}
\put(40, 16){\tiny $t$}
\put(70, 16){\tiny $k$}
\put(76, 7){,}
\end{picture}
the associated higher order Serre elements are
\[
[e_t, [e_j, [e_t, e_k]]],  \quad [f_t, [f_j, [f_t, f_k]]];
\]


\item \label{case-3-ideal}
\begin{picture}(80, 20)(0, 7)
\put(10, 6.5){$\times$}
\put(15, 10){\line(1, 0){20}}
{\color{gray}\put(40, 10){\circle*{10}} }
\put(45, 11){\line(1, 0){20}}
\put(45, 9){\line(1, 0){20}}
\put(55, 7){$>$}
\put(70, 10){\circle*{10}}
\put(10, 16){\tiny $j$}
\put(40, 16){\tiny $t$} \put(70, 16){\tiny $k$}
\put(76, 7){,}
\end{picture}
the associated higher order Serre elements are
\[
[e_t, [e_j, [e_t, e_k]]],  \quad [f_t, [f_j, [f_t, f_k]]];
\]

\item \label{case-4-ideal}
\begin{picture}(80, 20)(0, 7)
{\color{gray}\put(10, 10){\circle*{10}} }
\put(15, 10){\line(1, 0){20}} {\color{gray}
\put(40, 10){\circle*{10}} }
\put(45, 11){\line(1, 0){20}}
\put(45, 9){\line(1, 0){20}}
\put(46, 7){$<$}
\put(70, 10){\circle{10}}
\put(10, 16){\tiny $j$}
\put(40, 16){\tiny $t$}
\put(70, 16){\tiny $k$}
\put(76, 7){,}
\end{picture}
the associated higher order Serre elements are
\[
\begin{aligned}
{[}[e_j, e_t], [[e_j, e_t], [e_t, e_k]]],  \\
{[}[f_j, f_t], [[f_j, f_t], [f_t, f_k]]];
\end{aligned}
\]

\item \label{case-5-ideal}
\begin{picture}(120, 20)(-10, 7)
\put(0, 6.5){$\times$}
\put(5, 10){\line(1, 0){20}}
\put(30, 10){\circle{10}}
\put(35, 10){\line(1, 0){20}}
{\color{gray}\put(60, 10){\circle*{10}} }
\put(65, 11){\line(1, 0){20}}
\put(65, 9){\line(1, 0){20}}
\put(66, 7){$<$}
\put(90, 10){\circle{10}}
\put(6, 16){\tiny $i$}
\put(30, 16){\tiny $j$}
\put(60, 16){\tiny $t$}
\put(90, 16){\tiny $k$}
\put(96, 7){,}
\end{picture}
the associated higher order Serre elements are
\[
\begin{aligned}
{[[}e_i, [e_j, e_t]], [[e_j, e_t], [e_t, e_k]]],  \\
{[[}f_i, [f_j, f_t]], [[f_j, f_t], [f_t, f_k]]];
\end{aligned}
\]

\item \label{case-6-ideal}
\begin{picture}(65, 30)(0, 7)
\put(10, 6.8){$\times$}
\put(15, 10){\line(1, 1){20}}
\put(15,10){\line(1, -1){20}}
{\color{gray} \put(40, 30){\circle*{10}}}
\put(39, 25){\line(0, -1){30}}
\put(41, 25){\line(0, -1){30}}
{\color{gray} \put(40, -10){\circle*{10}}}
\put(10, 16){\tiny $i$}
\put(47, 28){\tiny $t$}
\put(47, -13){\tiny $s$}
\put(50, 7){,}
\end{picture}
the associated higher order Serre elements are
\[
\begin{aligned}
{[}e_t, [e_s, e_i]]-[e_s, [e_t, e_i]], \\
{[}f_t, [f_s, f_i]]-[f_s, [f_t, f_i]];
\end{aligned}
\]

\item\label{case-7-1-ideal}

\begin{picture}(125, 20)(-15,-5)
\put(0, 0){\circle{10}}
\put(5, 2){\line(1, 0){20}}
\put(5, 0){\line(1, 0){20}}
\put(5, -2){\line(1, 0){20}}
\put(15, -3.25){$>$}
{\color{gray} \put(30, 0){\circle*{10}}}
\put(35, 1){\line(1, 0){20}}
\put(35, -1){\line(1, 0){20}}

\put(35, -3.25){$<$}

\put(60, 0){\circle{10}}
\put(65, 0){\line(1, 0){20}}
\put(90, 0){\circle{10}}

\put(0, 7){\tiny 1}
\put(30, 7){\tiny 2}
\put(60, 7){\tiny 3}
\put(90, 7){\tiny 4}

\put(96, -2){,}
\end{picture}
which is a Dynkin diagram of $F(4)$, the associated higher order
Serre elements are
\[
\begin{aligned}
{[} E, [ E, [e_2, [e_3, e_4]]  ] {]}, \\
{[} F, [ F, [f_2, [f_3, f_4]]  ]  {]},
\end{aligned}
\]
where $E=[[e_1, e_2], [e_2, e_3]]$ and $F=[[f_1, f_2], [f_2, f_3]]$;

\item \label{case-7-2-ideal}
\begin{picture}(125, 25)(-15,-5)
\put(0, 0){\circle{10}}
\put(5, 2){\line(1, 0){20}}
\put(5, 0){\line(1, 0){20}}
\put(5, -2){\line(1, 0){20}}
\put(15, -3.25){$>$}
{\color{gray}\put(30, 0){\circle*{10}}}
\put(35, 0){\line(1, 0){20}}
\put(60, 0){\circle{10}}
\put(65, 1){\line(1, 0){20}}
\put(65, -1){\line(1, 0){20}}
\put(65, -3.25){$<$}
\put(90, 0){\circle{10}}

\put(0, 7){\tiny 1}
\put(30, 7){\tiny 2}
\put(60, 7){\tiny 3}
\put(90, 7){\tiny 4}

\put(96, -2){,}
\end{picture}
which is a Dynkin diagram of $F(4)$, the associated higher order
Serre elements are
\[
\begin{aligned}
{[}[e_1, e_2], [[e_2, e_3], [e_3, e_4]]
-[[e_2, e_3], [[e_1, e_2], [e_3, e_4]],\\
{[}[f_1, f_2], [[f_2, f_3], [f_3, f_4]]
-[[f_2, f_3], [[f_1, f_2], [f_3, f_4]];
\end{aligned}
\]

\item \label{case-8-ideal}

\begin{picture}(100, 25)(-20,-3)
\put(0, 0){\circle{10}}
\put(5, 1){\line(1, 0){20}}
\put(5, -1){\line(1, 0){20}}
\put(15, -3.5){$>$}
{\color{gray}\put(30, 0){\circle*{10}}}
\put(35, 1){\line(1, 0){20}}
\put(35, -1){\line(1, 0){20}} {\color{gray}
\put(60, 0){\circle*{10}}}

\put(0, 7){\tiny k}
\put(30, 7){\tiny t}
\put(60, 7){\tiny j}

\put(70, 0){,}
\end{picture}
which only appears in Dynkin diagrams of  $F(4)$, the associated
higher order Serre elements are
\[
\begin{aligned}
{[} e_t, [e_j,[e_t, e_k]] ],\\
{[} f_t, [f_j, [f_t, f_k]] ];
\end{aligned}
\]

\item \label{case-9-ideal}
\begin{picture}(75, 25)(-15,5)
{\color{gray}\put(5, 10){\circle*{10}}}
\put(5, 17){\tiny i}

\put(10, 10){\line(1, 1){20}}
\put(10, 10){\line(1, -1){20}}
\put(10, 8){\line(1, -1){18}}
{\color{gray}\put(32, 27){\circle*{10}}}
\put(40, 27){\tiny j}

\put(30,-2){\line(0, 1){24}}
\put(32,-2){\line(0, 1){24}}
\put(34,-2){\line(0, 1){24}}
{\color{gray}\put(32,-7){\circle*{10}}}
\put(40,-10){\tiny k}

\put(42,7){,}
\end{picture}
which only appears in one of the Dynkin diagrams of  $F(4)$,

\vspace{.8cm}
\noindent the associated higher order Serre elements are
\[
\begin{aligned}
2[e_i, [e_k, e_j]]+3[e_j, [e_k, e_i]], \\
2[f_i, [f_k, f_j]]+3[f_j, [f_k, f_i]];
\end{aligned}
\]

\item \label{case-10-ideal}
\begin{picture}(100, 20)(10,-5)
{\color{gray}\put(30, 0){\circle*{10}}} \put(35, 0){\line(1,
0){20}} {\color{gray}\put(60, 0){\circle*{10}}} \put(65,
2){\line(1, 0){20}} \put(65, 0){\line(1, 0){20}} \put(65,
-2){\line(1, 0){20}} \put(65, -3.25){$<$} \put(90,
0){\circle{10}}

\put(30, 7){\tiny 1} \put(60, 7){\tiny 2} \put(90, 7){\tiny 3}

\put(100, -5){,}
\end{picture}
which is one of the Dynkin diagrams of $G(3)$, the associated
higher order Serre elements are
\[
\begin{aligned}
{[} [e_1, e_2], [ [e_1, e_2], [ [e_1, e_2], [e_2, e_3] ] ]], \\
{[} [f_1, f_2], [ [f_1, f_2], [ [f_1, f_2],[f_2, f_3] ] ]];
\end{aligned}
\]


\item \label{case-11-ideal}
\begin{picture}(100, 20)(10,-3)
\put(30, 0){\circle*{10}}
\put(35, 1){\line(1, 0){20}}
\put(35, -1){\line(1, 0){20}}
\put(35, -3.25){$<$} {\color{gray}
\put(60, 0){\circle*{10}}}
\put(65, 2){\line(1, 0){20}}
\put(65, 0){\line(1, 0){20}}
\put(65, -2){\line(1, 0){20}}
\put(65, -3.25){$<$}
\put(90, 0){\circle{10}}
\put(100, -3){,}

\put(30, 7){\tiny 1}
\put(60, 7){\tiny 2}
\put(90, 7){\tiny 3}

\end{picture}
which is one of the Dynkin diagrams of $G(3)$, the associated
higher order Serre elements are
\[
\begin{aligned}
{[}[e_2, e_1], [e_3, [e_2, e_1]]] - [[e_2, e_3], [[e_1, e_1], e_2]], \\
{[}[f_2, f_1], [f_3, [f_2, f_1]]] - [[f_2, f_3], [[f_1, f_1], f_2]];
\end{aligned}
\]

\item \label{case-12-ideal}
\begin{picture}(65, 35)(-15, 0)
\put(5, 0){\circle{10}}
\put(5, 7) {\tiny 1}

\put(10, 1){\line(1, -1){20}}
\put(10, -1){\line(1, -1){20}}
\put(12, -2){\line(2, -1){10}}
\put(12, -2){\line(1, -2){5}}
{\color{gray}
\put(35, -20){\circle*{10}}}
\put(42, -22){\tiny 3}

\put(33, 15){\line(0, -1){30}}
\put(35, 15){\line(0, -1){30}}
\put(37, 15){\line(0, -1){30}}

{\color{gray}
\put(35, 20){\circle*{10}}}
\put(42, 18){\tiny 2}

\put(30, 20){\line(-1, -1){19}}
\put(44, 0){,}
\end{picture}
which is one of the Dynkin diagrams of $G(3)$, the associated

\vspace{1cm} \noindent higher order
Serre elements are
\[
\begin{aligned}
{[}e_2, [e_3, e_1]]-2[e_3, [e_2, e_1]], \\
{[}f_2, [f_3, f_1]]-2[f_3, [f_2, f_1]];
\end{aligned}
\]

\vspace{1cm}

\item \label{case-13-ideal}
\begin{picture}(100, 10)(50,20)
{\color{gray}\put(75, 20){\circle*{10}}} \put(80, 20){\line(1,
1){20}} \put(80, 20){\line(1, -1){20}} \put(82, 30){\tiny $1$}
\put(82 , 5){\tiny $\alpha$} \put(105,15){\tiny $-(1+\alpha)$}
{\color{gray}\put(102, 35){\circle*{10}}} \put(102, 5){\line(0,
1){25}} {\color{gray}\put(102, 0){\circle*{10}}}
\put(138, 10){,}
\end{picture}
which is one of the Dynkin diagram for $D(2, 1; \alpha)$.

\vspace{1cm}
\noindent The higher order
Serre elements are
\[
\begin{aligned}
\alpha [e_1, [e_2, e_3]] + (1+\alpha)[e_2, [e_1, e_3]], \\
\alpha [f_1, [f_2, f_3]] + (1+\alpha)[f_2, [f_1, f_3]],
\end{aligned}
\]
where we label the left, top and bottom nodes
by $1, 2$ and $3$ respectively.
\end{enumerate}

\begin{remark}\label{new}
Cases (7) - (14) were not considered  before in the literature.
\end{remark}

\begin{remark}
The Dynkin diagrams of $D(2, 1)$ and $D(2, 1; \alpha)$ in their respective distinguished
root systems are not among the full sub-diagrams listed above. Also,
the diagram (\ref{case-8-ideal}) above is a non-standard diagram of $\mathfrak{sl}_{3|1}$
(see Section \ref{problems-Dynkin-diagrams}).
\end{remark}

Denote by $\mathcal{S}^+(A, \Theta)$ (resp.
$\mathcal{S}^-(A, \Theta)$) the set of all the standard
and higher order Serre elements (if defined) which involve
generators $e_k$ (resp. $f_k$) only. Set
$\mathcal{S}(A, \Theta)=\mathcal{S}^+(A, \Theta)\cup\mathcal{S}^-(A, \Theta)$.
We have the following result.
\begin{lemma}\label{s-in-r}
The set $\mathcal{S}(A, \Theta)$ is contained in the maximal ideal $\fr(A, \Theta)$
of $\tilde\fg$.
\end{lemma}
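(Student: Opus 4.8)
The plan is to deduce everything from the criterion of Lemma~\ref{ideal-inclusion}. Write $\mathcal{S}(A,\Theta)=\mathcal{S}^+(A,\Theta)\cup\mathcal{S}^-(A,\Theta)$ with $\mathcal{S}^\pm(A,\Theta)\subset\tilde\fn^\pm$. The relations \eqref{g-tilde} are symmetric under the interchange of the $e$'s and the $f$'s together with $h_i\mapsto -h_i$, and the positive and negative Serre elements listed above are mirror images of one another under this interchange. Hence every computation of $[e_i,\mathcal{S}^-(A,\Theta)]$ is obtained from the corresponding computation of $[f_i,\mathcal{S}^+(A,\Theta)]$ by swapping $e\leftrightarrow f$, the only effect being a change in some signs of the structure constants, which is immaterial for membership in a linear span. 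It therefore suffices to establish the single inclusion $[f_i,\mathcal{S}^+(A,\Theta)]\subseteq\C\,\mathcal{S}^+(A,\Theta)$ for all $i$; the companion inclusion $[e_i,\mathcal{S}^-(A,\Theta)]\subseteq\C\,\mathcal{S}^-(A,\Theta)$ then follows, and Lemma~\ref{ideal-inclusion} yields $\mathcal{S}(A,\Theta)\subseteq\fr(A,\Theta)$.

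The first reduction I would make cuts down the brackets that must be examined. Each $S\in\mathcal{S}^+(A,\Theta)$ is a homogeneous iterated bracket in the generators $e_k$, and its $Q$-degree $\nu=\sum_k c_k\alpha_k$ determines a support $\operatorname{supp}(S)=\{k:c_k>0\}$. Since $ad_{f_i}$ is a (possibly odd) derivation of $\tilde\fg(A,\Theta)$ and $[f_i,e_k]$ is a scalar multiple of $\delta_{ik}h_i$ by \eqref{g-tilde}, the Leibniz rule immediately gives $[f_i,S]=0$ whenever $i\notin\operatorname{supp}(S)$. Thus only the finitely many indices $i\in\operatorname{supp}(S)$ require attention, and every such calculation takes place inside the sub-superalgebra of rank at most four generated by $\{e_k,f_k,h_k:k\in\operatorname{supp}(S)\}$.

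For the standard Serre elements this is classical. For the isotropic element $[e_s,e_s]$ (where $a_{ss}=0$) one has $[f_s,[e_s,e_s]]=[h_s,e_s]-[e_s,h_s]=0$ because $[h_s,e_s]=a_{ss}e_s=0$, while $[f_k,[e_s,e_s]]=0$ for $k\ne s$. For $S=(ad_{e_i})^{1-a_{ij}}(e_j)$ the only nontrivial index is $i$, and the familiar (super) $\mathfrak{sl}_2$, respectively $\mathfrak{osp}(1|2)$, computation based on $[e_i,f_i]=h_i$ and $[h_i,e_j]=a_{ij}e_j$ shows that $[f_i,S]$ is a scalar multiple of $(ad_{e_i})^{-a_{ij}}(e_j)$ whose coefficient contains the factor $a_{ij}+(1-a_{ij})-1=0$; hence $[f_i,S]=0$. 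So every standard Serre element is annihilated by all the $f_i$, and the inclusion holds trivially for them.

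The real content of the lemma lies in the higher order Serre elements of cases (1)--(14). For each of these I would compute $[f_i,S]$ for $i\in\operatorname{supp}(S)$ directly, using only \eqref{g-tilde}, the super Jacobi identity, and the explicit Cartan integers $a_{kl}$ of the relevant sub-diagram. The numerical coefficients and relative signs built into these elements (for instance the coefficients $2$ and $3$ in case (10), and the coefficients $\alpha,\,1+\alpha$ in case (14)) are dictated precisely by the requirement that the resulting expression either vanish identically or reduce to a linear combination of standard Serre elements of strictly smaller degree, and so lie in $\C\,\mathcal{S}^+(A,\Theta)$. I expect this case analysis to be the principal obstacle. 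Each individual bracket is elementary, but the bookkeeping is delicate: one must track the $\Z_2$-signs produced whenever $ad_{f_s}$ is applied across an odd factor (grey and black nodes), and one must recognise which of the lower brackets appearing are genuine standard Serre elements and which are forced to cancel using the specific values of the $a_{kl}$. Once all fourteen cases are verified, the inclusion $[f_i,\mathcal{S}^+(A,\Theta)]\subseteq\C\,\mathcal{S}^+(A,\Theta)$ holds for every $i$, and Lemma~\ref{ideal-inclusion} completes the proof.
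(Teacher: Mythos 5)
Your proposal follows the same route as the paper: the paper's entire proof consists of asserting that direct calculation gives $[f_i, \mathcal{S}^+(A, \Theta)]\subset \C\mathcal{S}^+(A, \Theta)$ and $[e_i, \mathcal{S}^-(A, \Theta)]\subset \C\mathcal{S}^-(A, \Theta)$ and then invoking Lemma \ref{ideal-inclusion}, with all details omitted. Your reductions (passing from the $f$-side to the $e$-side, the support argument, the explicit treatment of the standard Serre elements) are reasonable preprocessing for exactly that calculation, and, like the paper, you defer the fourteen higher-order cases. (Minor caveat: the naive swap $e_i\leftrightarrow f_i$, $h_i\mapsto -h_i$ is not a homomorphism on the odd generators, since $[f_i,e_i]=+h_i$ when $e_i$ is odd; the clean way to transfer the computation is the anti-involution $\omega$ that the paper introduces in Section \ref{Z-grading}.)

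One concrete point would trip you up if you carried out the verification exactly as you describe it. You predict that each $[f_i,S]$ either vanishes or reduces to a linear combination of standard Serre elements and hence lies in $\C\mathcal{S}^+(A,\Theta)$. That is not what happens. Take case (\ref{case-1-ideal}), $S=[e_t,[e_j,[e_t,e_k]]]$ with $t$ grey; here $sgn_{jt}sgn_{tk}=-1$ together with the single bonds forces $a_{tj}+a_{tk}=0$, and a short computation gives
\[
[f_t,S]=(a_{tj}+a_{tk})\,[e_j,[e_t,e_k]]\pm a_{tk}\,[e_t,[e_j,e_k]]=\pm a_{tk}\,[e_t,[e_j,e_k]].
\]
Now $[e_j,e_k]$ is indeed a standard Serre element (because $a_{jk}=0$), but $[e_t,[e_j,e_k]]$ is a bracket of a generator with a Serre element, not a scalar multiple of any element of $\mathcal{S}^+(A,\Theta)$; by $Q$-homogeneity it cannot be rewritten as a combination of Serre elements of other degrees. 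So the literal inclusion $[f_i,\mathcal{S}^+]\subset\C\mathcal{S}^+$ fails, and Lemma \ref{ideal-inclusion} as stated does not apply. What is true, and what suffices, is that $[f_i,\mathcal{S}^+(A,\Theta)]$ lands in the ideal of $\tilde\fn^+$ generated by $\mathcal{S}^+(A,\Theta)$; the proof of Lemma \ref{ideal-inclusion} goes through verbatim under this weaker, ideal-closed hypothesis, by commuting each $ad_{f_i}$ past the $ad_{e_j}$'s at the cost of diagonal $ad_{h_j}$ terms. The paper's own one-line proof glosses over the same point, so this is an inherited imprecision rather than a defect peculiar to your argument; but you should state the target of your case-by-case computation as ``the ideal generated by $\mathcal{S}^+(A,\Theta)$'' rather than ``$\C\mathcal{S}^+(A,\Theta)$'' and adjust the criterion accordingly, or the fourteen verifications you plan will not close.
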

\begin{proof}
Direct calculations show that
\[
[f_i, \mathcal{S}^+(A, \Theta)]\subset \C\mathcal{S}^+(A, \Theta), \quad
[e_i, \mathcal{S}^-(A, \Theta)]\subset \C\mathcal{S}^-(A, \Theta), \quad \forall i.
\]
Hence $\mathcal{S}(A, \Theta)\subset\fr(A, \Theta)$ by Lemma
\ref{ideal-inclusion}. We leave out the details of the calculations.
\end{proof}

\begin{definition}
Let $\fs(A, \Theta)$ be the $\Z_2$-graded ideal of
$\tilde\fg(A, \Theta)$ generated by the elements of
$\mathcal{S}(A, \Theta)$.
\end{definition}

Then $\fs(A, \Theta)\subset\fr(A, \Theta)$ by Lemma \ref{s-in-r}.
Define the Lie superalgebra
\begin{eqnarray}
\fg(A, \Theta):=\frac{\tilde\fg(A, \Theta)}{\fs(A, \Theta)}.
\end{eqnarray}
There exists a natural surjective Lie superalgebra map
$\fg(A, \Theta)\longrightarrow L(A, \Theta)$. We shall show that
it is in fact an isomorphism.

\subsubsection{$\Z$-gradings}\label{Z-grading}

Let us discuss $\Z$-gradings for the Lie supealgebras
$\fg(A, \Theta)$ and $L(A, \Theta)$. Fix a positive integer $d\le
r$, where $r$ is the size of $A$. We assign degrees to the
generators of $\tilde\fg(A, \Theta)$ as follows:
\begin{eqnarray}
\begin{aligned}
&deg(h_j)=0, \quad \forall j, \\
&deg(e_i)=deg(f_i)=0, \quad \forall i\ne d, \\
&deg(e_d)=-deg(f_d)=1.
\end{aligned}
\end{eqnarray}
This introduces a $\Z$-grading to the auxiliary Lie superalgebra
$\tilde\fg(A, \Theta)$, which is not required to be compatible with
the $\Z_2$-grading upon reduction modulo $2$.
In view of the $Q$-grading of $\tilde\fg(A, \Theta)$ and \eqref{triangular}, the maximal ideal $\fr(A, \Theta)$ is
$\Z$-graded. Since all elements in $\mathcal{S}(A, \Theta)$ are homogeneous with
the $\Z$-grading, $\fs(A, \Theta)$ is $\Z$-graded as well.

The Lie superalgebra $L(A, \Theta)$ inherits a $\Z$-grading from
$\tilde\fg(A, \Theta)$ and $\fr(A, \Theta)$. Write
$L(A, \Theta)=\oplus_{k\in\Z}L_k$. Since the roots of $L(A, \Theta)$
are known, we have a detailed understanding of all $L_k$  as
$L_0$-modules.

The Lie superalgebra $\fg(A, \Theta)$ inherits a $\Z$-grading from
$\tilde\fg(A, \Theta)$ and $\fs(A, \Theta)$. Write
$\fg(A, \Theta)=\oplus_{k\in\Z}\fg_k$, where $\fg_k$ is the
homogeneous component of degree $k$. Note that $\fg_1$ (resp.
$\fg_{-1}$) generates $\fg_{k}$ (resp. $\fg_{-k}$) for all $k>0$.
Thus if $\fg_{p}=0$ (resp. $\fg_{-p}=0$) for some $p>0$, then
$\fg_{q}=0$ (resp. $\fg_{-q}=0$) for all $q>p$. Also each $\fg_k$
forms a $\fg_0$-module in the obvious way.

We have the following result.
\begin{lemma}\label{key}
There exist $\Z$-gradings for $\fg(A, \Theta)$ and $L(A, \Theta)$
determined by some $d$ such that $\fg_0=L_0$ as Lie superalgebras
and $\fg_k=L_k$ as $\fg_0$-modules for all nonzero $k\in\Z$.
\end{lemma}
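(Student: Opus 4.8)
The plan is to exploit the structure already built into the problem: we have a surjection $\fg(A,\Theta)\to L(A,\Theta)$, both are $\Z$-graded by the degree assignment attached to the chosen simple root $\alpha_d$, and the map respects this grading. Since $\fs(A,\Theta)\subset\fr(A,\Theta)$, the surjection $\fg_k\to L_k$ is a map of $\fg_0$-modules in each degree. The strategy is therefore to prove the lemma degree by degree, choosing the index $d$ cleverly so that $L(A,\Theta)$ acquires a \emph{short} $\Z$-grading. The key point I would exploit is that for a finite dimensional simple Lie superalgebra, grading by a single simple root gives $L=L_{-N}\oplus\cdots\oplus L_0\oplus\cdots\oplus L_N$ with small $N$ (typically $N=1$ or $2$), and $L_0$ is a reductive Lie algebra (essentially the Levi determined by deleting node $d$). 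So the whole problem collapses to a finite, bounded amount of checking at low degrees.

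The steps, in order, are as follows. First I would establish the base case: $\fg_0\cong L_0$ as Lie superalgebras. By construction $\fg_0$ is generated by $\fh$ together with the $e_i,f_i$ for $i\ne d$, subject only to relations in $\mathcal{S}(A,\Theta)$ living in degree $0$; these are precisely the standard and higher order Serre elements of the sub-Cartan-datum obtained by deleting node $d$. By induction on the rank (the sub-diagram being of strictly smaller size), the relations in $\mathcal{S}$ of degree $0$ already cut $\tilde\fg_0$ down to $L_0$, giving $\fg_0\cong L_0$. Second, I would handle $\fg_{\pm1}$: here the subtlety is that $\fg_1$ is a cyclic $\fg_0$-module generated by the image of $e_d$, and I must show the map $\fg_1\to L_1$ is injective, i.e.\ that the higher order Serre relations are exactly enough to force $\fg_1$ to be the \emph{irreducible} $\fg_0$-module $L_1$. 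This is where Appendix~\ref{modules} enters: $\fg_1$ is a quotient of a generalised Verma (or lowest weight) module for $\fg_0$ induced from the weight of $e_d$, and the Serre elements of positive degree are exactly the singular vectors generating the maximal submodule, so the quotient is the irreducible $L_1$.

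Third, I would propagate upward: once $\fg_{\pm1}\cong L_{\pm1}$, I use the bracket-generation property (noted in the excerpt: $\fg_1$ generates $\fg_k$ for $k>0$) together with the requirement that $[\fg_1,\fg_1]=L_2$, and so on, to conclude $\fg_k\cong L_k$ for all $k$ by induction on $|k|$; since the grading is short, only finitely many degrees need inspection, and vanishing of $\fg_k$ for $|k|>N$ follows once the corresponding $L_k$ vanishes. The inductive step at degree $k$ amounts to checking that the surjection $\fg_k\to L_k$ has no kernel, which reduces to the module-theoretic statement that any relation needed in degree $k$ is already a consequence of the relations in lower degrees via the bracket — a statement controlled by the structure of $L_k$ as an irreducible $\fg_0$-module described in the appendix. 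The whole argument must be run case by case over the finitely many Dynkin diagrams, and in particular over the specific full sub-diagrams \eqref{case-1-ideal}–\eqref{case-13-ideal} that produce higher order relations.

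\textbf{The main obstacle} I expect is the injectivity at degree $\pm1$ for the exceptional and non-distinguished cases (items \eqref{case-7-1-ideal}–\eqref{case-13-ideal}), where the higher order Serre elements are genuinely intricate. One must verify that the \emph{specific} elements listed in $\mathcal{S}(A,\Theta)$ are precisely the generators of the maximal submodule of the relevant generalised Verma module — neither too few (which would leave $\fg_1$ too big, so the map fails to be injective) nor redundant (which is what guarantees minimality). Establishing the irreducibility of $L_{\pm1}$ as a $\fg_0$-module and matching its maximal submodule against the explicit Serre elements is the technical heart; it is exactly here that knowing the module structure of the generalised Verma modules of lowest weight type from Appendix~\ref{modules} is indispensable, and it is the reason the proof is split into the distinguished case (Section~\ref{proof-distinguished}) and the non-distinguished case (Section~\ref{proof-non-distinguished}).
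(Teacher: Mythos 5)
Your proposal follows essentially the same route as the paper: fix a $\Z$-grading by a well-chosen node $d$, get $\fg_0=L_0$ from a lower-rank presentation (Serre's theorem or rank induction), identify $\fg_{\pm1}$ with the irreducible quotient of a generalised Verma module whose maximal submodule is generated exactly by the listed Serre elements, and then propagate through the finitely many nonzero degrees case by case. The only ingredient you leave implicit is the anti-involution $\omega$ (Lemma \ref{parabolic}), which the paper uses to reduce everything to degrees $k>0$ rather than treating $\pm k$ symmetrically; this is a labour-saving device, not a logical gap.
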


This is the key lemma needed for establishing Theorem
\ref{generating} below. Its proof is elementary but very lengthy,
thus we relegate it to later sections.
Here we consider some general properties of the Lie
superalgebras $\fg(A, \Theta)$ and $L(A, \Theta)$, which will
significantly simplify the proof of Lemma \ref{key}.

Recall that an anti-involution $\omega$ of a Lie superalgebra
$\mathfrak{a}$ is a linear map on $\mathfrak{a}$ satisfying
$\omega([X, Y])=[\omega(Y), \omega(X)]$ for all $X,
Y\in\mathfrak{a}$, and $\omega^2=\id_{\mathfrak{a}}$. The Lie
superalgebra $\tilde\fg(A, \Theta)$ admits an anti-involution
defined by
\[
\omega(e_i)=f_i, \quad \omega(f_i)=e_i, \quad \omega(h_i)=h_i, \quad
\forall i.
\]
Note that $\omega({\mathcal S}^+)\subset -{\mathcal S}^-\cup
{\mathcal S}^-$ and $\omega({\mathcal S}^-) \subset -{\mathcal
S}^+\cup {\mathcal S}^+$, where ${\mathcal S}^\pm= {\mathcal S}^\pm(A, \Theta)$
and $-{\mathcal S}^\pm$ are respectively
the sets consisting of the negatives of the elements of ${\mathcal
S}^\pm$. Therefore, $\omega$ descents to an anti-involution on
$\fg(A, \Theta)$, which sends $\fg_k$ to $\fg_{-k}$ for all $k\in\Z$
and provides a $\fg_0$-module isomorphism between $\fg_{-k}$ and
the dual space of $\fg_k$.

The anti-involution of $\tilde\fg(A, \Theta)$ also descends to an
anti-involution of $L(A, \Theta)$, which maps $L_k$ to $L_{-k}$ for
all $k\in\Z$, and provides an isomorphism between the $L_0$-module
$L_{-k}$ and the dual $L_0$-module of $L_k$.

Therefore, if $\fg_0=L_0$ and $\fg_k=L_k$ for all $k>0$ as
$\fg_0$-modules, the existence of the anti-involutions immediately
implies that $\fg_{-k}=L_{-k}$ for all $k>0$. Hence in order to
prove Lemma \ref{key}, we only need to show that it holds for all
$k>0$.

The arguments above may be summarised as follows.
\begin{lemma} \label{parabolic}
If $\fg_0=L_0$ as Lie superalgebras
and $\fg_k=L_k$ for all $k>0$ as $\fg_0$-modules,
then Lemma \ref{key} holds.
\end{lemma}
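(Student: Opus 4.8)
The plan is to formalise the remarks made just before the statement by exploiting the anti-involution $\omega$ recorded above, which descends from $\tilde\fg(A, \Theta)$ to both $\fg(A, \Theta)$ and $L(A, \Theta)$. Write $\omega_\fg$ and $\omega_L$ for the two descended maps, and let $\pi\colon \fg(A, \Theta)\to L(A, \Theta)$ be the natural graded surjection. Under the hypotheses $\fg_0=L_0$ (as Lie superalgebras) and $\fg_k=L_k$ for all $k>0$ (meaning $\pi_k$ is an isomorphism in each positive degree), the whole task is to transport these identifications from positive to negative degrees by means of $\omega$, using that $\omega$ reverses the $\Z$-grading.

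First I would record the compatibility $\pi\circ\omega_\fg=\omega_L\circ\pi$. Both composites are anti-homomorphisms of Lie superalgebras, and both act by $e_i\mapsto \bar f_i$, $f_i\mapsto \bar e_i$, $h_i\mapsto \bar h_i$ on the Chevalley generators, where the bars denote images in $L(A, \Theta)$; since these generators generate everything and an anti-homomorphism is determined by its values on a generating set, the two composites coincide. As $\omega_\fg$ restricts to a linear bijection $\fg_k\to\fg_{-k}$ and $\omega_L$ to a bijection $L_k\to L_{-k}$, for each $k>0$ this yields a commutative square
\[
\begin{CD}
\fg_k @>{\omega_\fg}>> \fg_{-k}\\
@V{\pi_k}VV @VV{\pi_{-k}}V\\
L_k @>{\omega_L}>> L_{-k}.
\end{CD}
\]
By hypothesis $\pi_k$ is an isomorphism for every $k>0$, and the horizontal arrows are isomorphisms, so $\pi_{-k}=\omega_L\,\pi_k\,\omega_\fg^{-1}$ is an isomorphism for every $k>0$. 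To promote this to an isomorphism of $\fg_0$-modules, I would note that $\omega$ fixes $\fh$ and preserves degree $0$, so its restriction is an anti-automorphism of $\fg_0$; the relation $\omega([X,Y])=[\omega(Y),\omega(X)]$ then shows that $\omega_\fg$ identifies $\fg_{-k}$ with the contragredient $\fg_0$-module of $\fg_k$, and $\omega_L$ identifies $L_{-k}$ with the dual $L_0$-module of $L_k$. Since $\pi$ intertwines the adjoint actions and the anti-involutions at once, $\pi_{-k}$ is $\fg_0$-linear, and dualising the identifications $\fg_0=L_0$, $\fg_k=L_k$ gives $\fg_{-k}=L_{-k}$ as $\fg_0$-modules. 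Combined with the hypotheses at $k=0$ and $k>0$, this yields $\fg_k=L_k$ as $\fg_0$-modules for all nonzero $k$, which is precisely Lemma \ref{key}.

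There is no substantive obstacle in this argument; it is the bookkeeping counterpart of the paragraph preceding the statement. The only step deserving genuine care is the last one: checking that the induced identification $\fg_{-k}\cong L_{-k}$ respects the $\fg_0$-action and not merely the underlying vector spaces, which amounts to tracking that the degree-reversing $\omega$ sends a module to its contragredient rather than to the module itself. Once this is verified, all the content of Lemma \ref{key} is confined to the positive-degree assertion $\fg_k=L_k$ for $k>0$, whose proof is carried out in the later sections.
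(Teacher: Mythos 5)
Your argument is correct and is essentially the paper's own proof: the paper likewise uses the anti-involution $\omega$ descending to both $\fg(A,\Theta)$ and $L(A,\Theta)$, reversing the $\Z$-grading and identifying each $\fg_{-k}$ (resp. $L_{-k}$) with the dual $\fg_0$-module (resp. $L_0$-module) of $\fg_k$ (resp. $L_k$), so that the positive-degree identifications transport to negative degrees. Your explicit verification of $\pi\circ\omega_\fg=\omega_L\circ\pi$ and of the $\fg_0$-equivariance of the induced map is just a more careful spelling-out of the same bookkeeping.
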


This result will play an essential role in the proof of Lemma
\ref{key}.

\subsubsection{Main theorem}

The following theorem is the main result of this paper.
\begin{theorem}\label{generating}
The Lie superalgebra $\fg(A, \Theta)$ coincides with $L(A, \Theta)$,
or equivalently, the ideal $\fs(A, \Theta)$ of
$\tilde\fg(A, \Theta)$ is equal to the maximal ideal
$\fr(A, \Theta)$.
\end{theorem}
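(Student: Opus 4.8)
The plan is to deduce Theorem \ref{generating} from the key lemma, Lemma \ref{key}, and the reductions already assembled in the excerpt. Since we have the surjective Lie superalgebra homomorphism $\pi\colon \fg(A, \Theta)\longrightarrow L(A, \Theta)$ coming from the inclusion $\fs(A, \Theta)\subset\fr(A, \Theta)$ (Lemma \ref{s-in-r}), the whole statement reduces to showing that $\pi$ is injective, equivalently $\fs(A, \Theta)=\fr(A, \Theta)$. Both sides are $\Z_2$-graded ideals of $\tilde\fg(A, \Theta)$ intersecting $\fh$ trivially, and both are $\Z$-graded for the grading of Section \ref{Z-grading}. The map $\pi$ is a homomorphism of $\Z$-graded Lie superalgebras, so it restricts to $\fg_0\to L_0$ and to $\fg_k\to L_k$ for every $k$; I would first record that $\pi$ is surjective in each degree because the generators $e_i, f_i, h_i$ lie in degrees $0$ and $\pm1$ and generate everything.

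Next I would invoke Lemma \ref{key}: there is a choice of $d$ giving $\Z$-gradings with $\fg_0\cong L_0$ as Lie superalgebras and $\fg_k\cong L_k$ as $\fg_0$-modules for all $k\neq 0$. The point is that $\pi$ is already a surjection in each degree, and Lemma \ref{key} tells us source and target have equal (finite) dimension in each degree; hence $\pi$ is a degree-preserving surjection between spaces of the same finite dimension in every degree, so it is a bijection in every degree, and therefore $\pi$ is an isomorphism. Concretely, I would argue that $\dim\fg_k=\dim L_k$ for all $k$ forces $\ker\pi=0$, whence $\fs(A, \Theta)=\fr(A, \Theta)$ and $\fg(A, \Theta)\cong L(A, \Theta)$. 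I should check that the identifications in Lemma \ref{key} are compatible with $\pi$, i.e.\ that the isomorphism $\fg_k\cong L_k$ there is realised by $\pi$ itself and not merely an abstract isomorphism; this follows because $\pi$ is the canonical quotient map and is $\fg_0$-equivariant, so a surjection of $\fg_0$-modules of equal finite dimension.

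A subtlety I would flag is that Lemma \ref{key} is stated for one particular good value of $d$, whereas $\fg(A, \Theta)$ and $L(A, \Theta)$ are defined independently of any $\Z$-grading; the $\Z$-grading is only an auxiliary device. I would therefore emphasise that injectivity of $\pi$ in the single $d$-grading furnished by Lemma \ref{key} already gives $\fg(A, \Theta)\cong L(A, \Theta)$ as ungraded (or $\Z_2$-graded) superalgebras, since $\pi$ is a fixed map whose kernel is an honest ideal of $\tilde\fg(A, \Theta)/\fs$, independent of the grading used to detect it. No further case analysis over Borel subalgebras is needed at this stage because that analysis is entirely absorbed into Lemma \ref{key}.

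The main obstacle is not in this deduction at all — it is purely formal once Lemma \ref{key} is in hand — but in Lemma \ref{key} itself, which the author has deferred to Sections \ref{proof-distinguished} and \ref{proof-non-distinguished}. In particular, the hard content is establishing $\fg_k\cong L_k$ for $k\neq 0$, i.e.\ showing that the explicitly chosen Serre elements in $\mathcal{S}(A, \Theta)$ already cut $\fg_{\pm1}$ down to the correct irreducible $\fg_0$-modules and force the higher brackets $[\fg_{\pm1}, \fg_{\pm1}]$ to match $L_{\pm2}$, and so on through all degrees; Lemma \ref{parabolic} lets me restrict attention to $k>0$, which halves the work. For the present theorem, though, the only thing to verify is that equality of graded dimensions upgrades the canonical surjection to an isomorphism, and this is immediate from finite-dimensionality of each graded piece.
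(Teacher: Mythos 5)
Your argument is correct and is essentially the paper's own proof: both deduce the theorem from Lemma \ref{key} by observing that the canonical surjection $\fg(A,\Theta)\to L(A,\Theta)$ coming from the inclusion $\fs(A,\Theta)\subset\fr(A,\Theta)$ of $\Z$-graded ideals preserves the $\Z$-grading, so a nonzero kernel would force some graded component of $\fg(A,\Theta)$ to be strictly larger than the corresponding component of $L(A,\Theta)$, contradicting the equality of (finite-dimensional) graded pieces. Your extra remarks — that the isomorphism of Lemma \ref{key} must be realised by the canonical map itself, and that injectivity detected in one auxiliary grading suffices — are sound and merely make explicit what the paper leaves implicit.
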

\begin{proof}
Note that Lemma \ref{key} immediately implies the claim. Indeed, we
have already shown in Lemma \ref{s-in-r} that
$\fs(A, \Theta)\subset\fr(A, \Theta)$, and this is an inclusion of
$\Z$-graded ideals of $\tilde\fg(A, \Theta)$. If
$\fs(A, \Theta)\ne\fr(A, \Theta)$, there would exist a surjective
Lie superalgebra homomorphism $\fg(A, \Theta) \longrightarrow
L(A, \Theta)$ with a nonzero kernel. Thus for some $k$, the
degree-$k$ homogeneous components of $L(A, \Theta)$ and
$\fg(A, \Theta)$ are not equal. This contradicts Lemma \ref{key}.
\end{proof}

\subsection{Presentations of Lie superalgebras}\label{Borels} 

Since the generators of $\fs(A, \Theta)$ are known explicitly,
Theorem \ref{generating} provides a presentation for each simple
contragredient Lie superalgebra and $\mathfrak{sl}_{n+1|n+1}$ in an
arbitrary root system. We have the following
result for the Lie superalgebra $L(A, \Theta)$.

\begin{theorem}\label{Serre-g}
The Lie superalgebra $L(A, \Theta)$ is generated by the generators
$e_i, f_i, h_i$ ($1\le i\le r$), where $e_i$ and $f_i$ are odd if
$i\in\Theta$, and even otherwise, subject to\\
the quadratic relations
\begin{eqnarray}  \label{relations-1g}
\begin{aligned}
&{[}h_i, h_j] =0, \\
&[h_i, e_j] =a_{i j} e_j, \quad [h_i, f_j] =-a_{i j} f_{j}, \\
&[e_i, f_j] =\delta_{i j} h_i, \quad \forall i, j;
\end{aligned}
\end{eqnarray}
standard Serre relations
\begin{eqnarray}\label{relations-2g}
\begin{aligned}
&(ad_{e_i})^{1-a_{i j}}(e_j)=0, \\
& (ad_{f_i})^{1-a_{i j}}(f_j)=0,
    \quad \text{for  $i\ne j$,  with $a_{i i}\ne 0$ or $a_{i j}=0$};\\
&[e_t, e_t]=0, \quad  [f_t, f_t]=0, \quad \text{for $a_{t t}=0$};
\end{aligned}
\end{eqnarray}
and higher order Serre relations if the Dynkin diagram of $(A, \Theta)$ contains
any of the following diagrams as full sub-diagrams:
\begin{enumerate}
\item \label{HOS-g-1}
\begin{picture}(75, 15)(0, 7)
\put(10, 6.5){$\times$}
\put(15, 10){\line(1, 0){20}}
{\color{gray} \put(40, 10){\circle*{10}} }
\put(45, 10){\line(1, 0){20}}
\put(62, 6.5){$\times$}
\put(10, 16){\tiny $j$}
\put(40, 16){\tiny $t$}
\put(63, 16){\tiny $k$}
\end{picture}
with $sgn_{j t}sgn_{t k}=-1$, the associated
higher order Serre relations are
\[
[e_t, [e_j, [e_t, e_k]]]=0,  \quad [f_t, [f_j, [f_t, f_k]]]=0;
\]

\item \label{HOS-g-2}
\begin{picture}(80, 20)(0, 7)
\put(10, 6.5){$\times$}
\put(15, 10){\line(1, 0){20}}
{\color{gray}\put(40, 10){\circle*{10}} }
\put(45, 11){\line(1, 0){20}}
\put(45, 9){\line(1, 0){20}}
\put(55, 7){$>$} \put(70,
10){\circle{10}} \put(10, 16){\tiny $j$}
\put(40, 16){\tiny $t$}
\put(70, 16){\tiny $k$}
\put(76, 7){,}
\end{picture}
the associated higher order Serre relations are
\[
[e_t, [e_j, [e_t, e_k]]=0,  \quad [f_t, [f_j, [f_t, f_k]]]=0;
\]


\item \label{HOS-g-3}
\begin{picture}(80, 20)(0, 7)
\put(10, 6.5){$\times$}
\put(15, 10){\line(1, 0){20}}
{\color{gray}\put(40, 10){\circle*{10}} }
\put(45, 11){\line(1, 0){20}}
\put(45, 9){\line(1, 0){20}}
\put(55, 7){$>$}
\put(70, 10){\circle*{10}}
\put(10, 16){\tiny $j$}
\put(40, 16){\tiny $t$} \put(70, 16){\tiny $k$}
\put(76, 7){,}
\end{picture}
the associated higher order Serre relations are
\[
[e_t, [e_j, [e_t, e_k]]]=0,  \quad [f_t, [f_j, [f_t, f_k]]]=0;
\]

\item \label{HOS-g-4}
\begin{picture}(80, 20)(0, 7)
{\color{gray}\put(10, 10){\circle*{10}} }
\put(15, 10){\line(1, 0){20}} {\color{gray}
\put(40, 10){\circle*{10}} }
\put(45, 11){\line(1, 0){20}}
\put(45, 9){\line(1, 0){20}}
\put(46, 7){$<$}
\put(70, 10){\circle{10}}
\put(10, 16){\tiny $j$}
\put(40, 16){\tiny $t$}
\put(70, 16){\tiny $k$}
\put(76, 7){,}
\end{picture}
the associated higher order Serre relations are
\[
\begin{aligned}
{[}[e_j, e_t], [[e_j, e_t], [e_t, e_k]]]=0,  \\
{[}[f_j, f_t], [[f_j, f_t], [f_t, f_k]]]=0;
\end{aligned}
\]

\item \label{HOS-g-5}
\begin{picture}(120, 20)(-10, 7)
\put(0, 6.5){$\times$}
\put(5, 10){\line(1, 0){20}}
\put(30, 10){\circle{10}}
\put(35, 10){\line(1, 0){20}}
{\color{gray}\put(60, 10){\circle*{10}} }
\put(65, 11){\line(1, 0){20}}
\put(65, 9){\line(1, 0){20}}
\put(66, 7){$<$}
\put(90, 10){\circle{10}}
\put(6, 16){\tiny $i$}
\put(30, 16){\tiny $j$}
\put(60, 16){\tiny $t$}
\put(90, 16){\tiny $k$}
\put(96, 7){,}
\end{picture}
the associated higher order Serre relations are
\[
\begin{aligned}
{[[}e_i, [e_j, e_t]], [[e_j, e_t], [e_t, e_k]]]=0,  \\
{[[}f_i, [f_j, f_t]], [[f_j, f_t], [f_t, f_k]]]=0;
\end{aligned}
\]

\item \label{HOS-g-6}
\begin{picture}(65, 30)(0, 7)
\put(10, 6.8){$\times$}
\put(15, 10){\line(1, 1){20}}
\put(15,10){\line(1, -1){20}}
{\color{gray} \put(40, 30){\circle*{10}}}
\put(39, 25){\line(0, -1){30}}
\put(41, 25){\line(0, -1){30}}
{\color{gray} \put(40, -10){\circle*{10}}}
\put(10, 16){\tiny $i$}
\put(47, 28){\tiny $t$}
\put(47, -13){\tiny $s$}
\put(50, 7){,}
\end{picture}
the associated higher order Serre relations are
\[
\begin{aligned}
{[}e_t, [e_s, e_i]]-[e_s, [e_t, e_i]]=0, \\
{[}f_t, [f_s, f_i]]-[f_s, [f_t, f_i]]=0;
\end{aligned}
\]

\item \label{HOS-g-7}
\begin{picture}(125, 20)(-15,-3)
\put(0, 0){\circle{10}}
\put(5, 2){\line(1, 0){20}}
\put(5, 0){\line(1, 0){20}}
\put(5, -2){\line(1, 0){20}}
\put(15, -3.25){$>$}
{\color{gray} \put(30, 0){\circle*{10}}}
\put(35, 1){\line(1, 0){20}}
\put(35, -1){\line(1, 0){20}}

\put(35, -3.25){$<$}

\put(60, 0){\circle{10}}
\put(65, 0){\line(1, 0){20}}
\put(90, 0){\circle{10}}

\put(0, 7){\tiny 1}
\put(30, 7){\tiny 2}
\put(60, 7){\tiny 3}
\put(90, 7){\tiny 4}

\put(96, -2){,}
\end{picture}
the associated higher order Serre relations are
\[
\begin{aligned}
{[} E, [ E, [e_2, [e_3, e_4]]  ] {]}=0, \\
{[} F, [ F, [f_2, [f_3, f_4]]  ]  {]}=0,
\end{aligned}
\]
where $E=[[e_1, e_2], [e_2, e_3]]$ and $F=[[f_1, f_2], [f_2, f_3]]$;

\item \label{HOS-g-8}
\begin{picture}(125, 20)(-15,-3)
\put(0, 0){\circle{10}}
\put(5, 2){\line(1, 0){20}}
\put(5, 0){\line(1, 0){20}}
\put(5, -2){\line(1, 0){20}}
\put(15, -3.25){$>$}
{\color{gray}\put(30, 0){\circle*{10}}}
\put(35, 0){\line(1, 0){20}}
\put(60, 0){\circle{10}}
\put(65, 1){\line(1, 0){20}}
\put(65, -1){\line(1, 0){20}}
\put(65, -3.25){$<$}
\put(90, 0){\circle{10}}

\put(0, 7){\tiny 1}
\put(30, 7){\tiny 2}
\put(60, 7){\tiny 3}
\put(90, 7){\tiny 4}

\put(96, -2){,}
\end{picture}
the associated higher order Serre relations are
\[
\begin{aligned}
{[}[e_1, e_2], [[e_2, e_3], [e_3, e_4]]
-[[e_2, e_3], [[e_1, e_2], [e_3, e_4]]=0,\\
{[}[f_1, f_2], [[f_2, f_3], [f_3, f_4]]
-[[f_2, f_3], [[f_1, f_2], [f_3, f_4]]=0;
\end{aligned}
\]

\item \label{HOS-g-9}
\begin{picture}(100, 25)(-20,-3)
\put(0, 0){\circle{10}}
\put(5, 1){\line(1, 0){20}}
\put(5, -1){\line(1, 0){20}}
\put(15, -3.5){$>$}
{\color{gray}\put(30, 0){\circle*{10}}}
\put(35, 1){\line(1, 0){20}}
\put(35, -1){\line(1, 0){20}} {\color{gray}
\put(60, 0){\circle*{10}}}

\put(0, 7){\tiny k}
\put(30, 7){\tiny t}
\put(60, 7){\tiny j}

\put(70, 0){,}
\end{picture}
the associated higher order Serre relations are
\[
\begin{aligned}
{[} e_t, [e_j,[e_t, e_k]] ]=0,\\
{[} f_t, [f_j, [f_t, f_k]] ]=0;
\end{aligned}
\]

\item \label{HOS-g-10}
\begin{picture}(75, 25)(-15,5)
{\color{gray}\put(5, 10){\circle*{10}}}
\put(5, 17){\tiny i}

\put(10, 10){\line(1, 1){20}}
\put(10, 10){\line(1, -1){20}}
\put(10, 8){\line(1, -1){18}}
{\color{gray}\put(32, 27){\circle*{10}}}
\put(40, 27){\tiny j}

\put(30,-2){\line(0, 1){24}}
\put(32,-2){\line(0, 1){24}}
\put(34,-2){\line(0, 1){24}}
{\color{gray}\put(32,-7){\circle*{10}}}
\put(40,-10){\tiny k}

\put(42,7){,}
\end{picture}
the associated higher order Serre relations are
\[
\begin{aligned}
2[e_i, [e_k, e_j]]+3[e_j, [e_k, e_i]]=0, \\
2[f_i, [f_k, f_j]]+3[f_j, [f_k, f_i]]=0;
\end{aligned}
\]

\item \label{HOS-g-11}
\begin{picture}(100, 20)(10,-5)
{\color{gray}\put(30, 0){\circle*{10}}} \put(35, 0){\line(1,
0){20}} {\color{gray}\put(60, 0){\circle*{10}}} \put(65,
2){\line(1, 0){20}} \put(65, 0){\line(1, 0){20}} \put(65,
-2){\line(1, 0){20}} \put(65, -3.25){$<$} \put(90,
0){\circle{10}}

\put(30, 7){\tiny 1} \put(60, 7){\tiny 2} \put(90, 7){\tiny 3}

\put(100, -5){,}
\end{picture}
the higher order Serre relations are
\[
\begin{aligned}
{[} [e_1, e_2], [ [e_1, e_2], [ [e_1, e_2], [e_2, e_3] ] ]]=0, \\
{[} [f_1, f_2], [ [f_1, f_2], [ [f_1, f_2],[f_2, f_3] ] ]]=0;
\end{aligned}
\]


\item \label{HOS-g-12}
\begin{picture}(100, 20)(10,-3)
\put(30, 0){\circle*{10}}
\put(35, 1){\line(1, 0){20}}
\put(35, -1){\line(1, 0){20}}
\put(35, -3.25){$<$} {\color{gray}
\put(60, 0){\circle*{10}}}
\put(65, 2){\line(1, 0){20}}
\put(65, 0){\line(1, 0){20}}
\put(65, -2){\line(1, 0){20}}
\put(65, -3.25){$<$}
\put(90, 0){\circle{10}}
\put(100, -3){,}

\put(30, 7){\tiny 1}
\put(60, 7){\tiny 2}
\put(90, 7){\tiny 3}

\end{picture}
the higher order Serre relations are
\[
\begin{aligned}
{[}[e_2, e_1], [e_3, [e_2, e_1]]] - [[e_2, e_3], [[e_1, e_1], e_2]]=0, \\
{[}[f_2, f_1], [f_3, [f_2, f_1]]] - [[f_2, f_3], [[f_1, f_1], f_2]]=0;
\end{aligned}
\]

\item \label{HOS-g-13}
\begin{picture}(65, 35)(-15, 0)
\put(5, 0){\circle{10}}
\put(5, 7) {\tiny 1}

\put(10, 1){\line(1, -1){20}}
\put(10, -1){\line(1, -1){20}}
\put(12, -2){\line(2, -1){10}}
\put(12, -2){\line(1, -2){5}}
{\color{gray}
\put(35, -20){\circle*{10}}}
\put(42, -22){\tiny 3}

\put(33, 15){\line(0, -1){30}}
\put(35, 15){\line(0, -1){30}}
\put(37, 15){\line(0, -1){30}}

{\color{gray}
\put(35, 20){\circle*{10}}}
\put(42, 18){\tiny 2}

\put(30, 20){\line(-1, -1){19}}
\put(44, 0){,}
\end{picture}
the higher order Serre relations are
\[
\begin{aligned}
{[}e_2, [e_3, e_1]]-2[e_3, [e_2, e_1]]=0, \\
{[}f_2, [f_3, f_1]]-2[f_3, [f_2, f_1]]=0;
\end{aligned}
\]

\item \label{HOS-g-14}
\begin{picture}(100, 30)(50,10)
{\color{gray}\put(75, 20){\circle*{10}}} \put(80, 20){\line(1,
1){20}} \put(80, 20){\line(1, -1){20}} \put(82, 30){\tiny $1$}
\put(82 , 5){\tiny $\alpha$} \put(105,15){\tiny $-(1+\alpha)$}
{\color{gray}\put(102, 35){\circle*{10}}} \put(102, 5){\line(0,
1){25}} {\color{gray}\put(102, 0){\circle*{10}}}
\put(138, 10){, }
\end{picture}
the higher order Serre relations are
\[
\begin{aligned}
\alpha [e_1, [e_2, e_3]] + (1+\alpha)[e_2, [e_1, e_3]]=0, \\
\alpha [f_1, [f_2, f_3]] + (1+\alpha)[f_2, [f_1, f_3]]=0,
\end{aligned}
\]
where the left node is labeled by $1$, the top node by $2$ and
bottom one by $3$.
\end{enumerate}
\end{theorem}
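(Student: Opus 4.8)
The plan is to derive this presentation as an immediate corollary of Theorem \ref{generating}, essentially by unwinding the definitions of the algebras involved. First I would recall that, by Definition \ref{auxiliary}, the auxiliary Lie superalgebra $\tilde\fg(A, \Theta)$ is generated by the homogeneous symbols $e_i, f_i, h_i$ subject \emph{only} to the quadratic relations \eqref{g-tilde}; equivalently, it is the quotient of the free Lie superalgebra on these generators by the ideal generated by \eqref{g-tilde}. Forming the further quotient $\fg(A, \Theta)=\tilde\fg(A, \Theta)/\fs(A, \Theta)$ then amounts to imposing, on top of \eqref{g-tilde}, the vanishing of every generator of the ideal $\fs(A, \Theta)$.

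Next I would note that, by definition, $\fs(A, \Theta)$ is generated by the set $\mathcal{S}(A, \Theta)$ of standard and higher order Serre elements. A direct comparison of the displayed lists shows that setting the standard Serre elements to zero reproduces exactly the standard Serre relations \eqref{relations-2g}, and that setting the higher order Serre elements attached to each admissible full sub-diagram to zero reproduces precisely the higher order Serre relations in cases \ref{HOS-g-1}--\ref{HOS-g-14} of the statement. Hence $\fg(A, \Theta)$ is, tautologically, the Lie superalgebra presented by the generators $e_i, f_i, h_i$ subject to the quadratic relations, the standard Serre relations, and the higher order Serre relations listed in the theorem.

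Finally, Theorem \ref{generating} asserts that $\fg(A, \Theta)=L(A, \Theta)$, equivalently $\fs(A, \Theta)=\fr(A, \Theta)$; this identifies the presented algebra with $L(A, \Theta)$ and completes the argument. The entire mathematical content therefore resides in Theorem \ref{generating}, whose proof rests on the key lemma, Lemma \ref{key}; within the present corollary the only task is the purely notational verification that the generators of $\fs(A, \Theta)$ match the displayed relations diagram by diagram, so I expect no genuine obstacle here beyond careful bookkeeping. In particular, completeness of the relations is automatic: since $\fs(A, \Theta)$ coincides with the \emph{maximal} ideal $\fr(A, \Theta)$ intersecting $\fh$ trivially, no further relation can be adjoined without collapsing part of the Cartan subalgebra, so the listed relations are exactly the defining ones.
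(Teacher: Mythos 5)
Your proposal is correct and matches the paper's own treatment: the paper states Theorem \ref{Serre-g} as an immediate consequence of Theorem \ref{generating}, observing that the explicit generators of $\fs(A, \Theta)$ are exactly the standard and higher order Serre elements, so that $\fg(A, \Theta)=\tilde\fg(A, \Theta)/\fs(A, \Theta)$ is tautologically the algebra with the listed presentation and the identification with $L(A, \Theta)$ does all the work. Your closing remark on completeness via the maximality of $\fr(A, \Theta)$ is likewise the paper's own justification.
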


When $(A, \Theta)$ is given in the distinguished root system, Theorem
\ref{Serre-g} simplifies considerably. We have the following result.

\begin{theorem}\label{Serre-distin}
Let $(A, \Theta)$ with $\Theta=\{s\}$ be the Cartan matrix of a contragredient Lie
superalgebra in the distinguished root system.
Then $L(A, \Theta)$ is generated by generators $e_i, f_i, h_i$
($i=1, 2, \dots, r$), where $e_s$ and $f_s$ are odd and the rest
even, subject to \\
the quadratic relations
\begin{eqnarray}  \label{relations-1}
\begin{aligned}
&{[}h_i, h_j] =0, \\
&[h_i, e_j] =a_{i j} e_j, \quad [h_i, f_j] =-a_{i j} f_{j}, \\
&[e_i, f_j] =\delta_{i j} h_i, \quad \forall i, j;
\end{aligned}
\end{eqnarray}
standard Serre relations
\begin{eqnarray}\label{relations-2}
\begin{aligned}
&(ad_{e_i})^{1-a_{i j}}(e_j)=0, \\
& (ad_{f_i})^{1-a_{i j}}(f_j)=0,
    \quad \text{for \ } i\ne j, \ a_{i i}\ne 0;\\
&[e_s, e_s]=0, \quad  [f_s, f_s]=0, \quad \text{for $a_{ss}=0$};
\end{aligned}
\end{eqnarray}
and higher order Serre relations
\begin{eqnarray}
[e_s, [e_{s-1}, [e_s, e_{s+1}]]]=0,  \quad [f_s, [f_{s-1}, [f_s, f_{s+1}]]]=0,
\end{eqnarray}
if the Dynkin diagram of $A$ contains a full sub-diagram of the form
\begin{center}
\begin{picture}(85, 20)(0, 7)
\put(10, 10){\circle{10}}
\put(15, 10){\line(1, 0){20}}
{\color{gray} \put(40, 10){\circle*{10}} }
\put(45, 10){\line(1, 0){20}}
\put(70, 10){\circle{10}}
\put(5,  18){\tiny $s-1$}
\put(40, 18){\tiny $s$}
\put(60, 18){\tiny $s+1$}
\end{picture}
with $sgn_{s-1,s} sgn_{s,s+1}=-1$, \quad or
\begin{picture}(90, 20)(0, 7)
\put(10, 10){\circle{10}}
\put(15, 10){\line(1, 0){20}}
{\color{gray}
\put(40, 10){\circle*{10}} }
\put(45, 11){\line(1, 0){20}}
\put(45, 9){\line(1, 0){20}}
\put(55, 7){$>$}
\put(70, 10){\circle{10}}
\put(5, 18){\tiny $s-1$}
\put(40, 18){\tiny $s$}
\put(60, 18){\tiny $s+1$}
\put(78, 7){.}
\end{picture}
\end{center}
\end{theorem}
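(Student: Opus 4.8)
The plan is to obtain Theorem~\ref{Serre-distin} as the specialization of Theorem~\ref{Serre-g} to a distinguished root system, exploiting the single structural feature of such a system: $\Theta=\{s\}$, so that $\alpha_s$ is the \emph{unique} odd simple root and $a_{ii}\neq 0$ for every $i\neq s$. Theorem~\ref{Serre-g} already presents $L(A,\{s\})$ by the quadratic relations, the standard Serre relations \eqref{relations-2g}, and the higher order relations attached to the full sub-diagrams that actually occur, so all that remains is to show that in the distinguished case these collapse to \eqref{relations-2} and to the single higher order relation of the statement.

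First I would simplify the standard Serre relations. For $i\neq s$ one has $a_{ii}\neq 0$, so $(ad_{e_i})^{1-a_{ij}}(e_j)=0$ is imposed for every $j\neq i$; together with $[e_s,e_s]=0$ and $[f_s,f_s]=0$ these constitute exactly \eqref{relations-2}. The only members of \eqref{relations-2g} not literally listed in \eqref{relations-2} have $i=s$, and since $a_{ss}=0$ they occur only when $a_{sj}=0$, in which case they read $[e_s,e_j]=0$. As $a_{sj}$ and $a_{js}$ are both proportional to $b_{sj}=(\alpha_s,\alpha_j)$, we have $a_{sj}=0\iff a_{js}=0$, and then $[e_s,e_j]=\pm(ad_{e_j})(e_s)=\pm(ad_{e_j})^{1-a_{js}}(e_s)=0$ by graded antisymmetry, i.e. it already follows from the $j$-side relation in \eqref{relations-2}. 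Hence \eqref{relations-2g} and \eqref{relations-2} generate the same ideal.

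Next I would decide which higher order configurations of Theorem~\ref{Serre-g} can survive. Since $\alpha_s$ is the only odd simple root, the Dynkin diagram of $(A,\{s\})$ has at most one non-white node: a single grey node and no black node if $\alpha_s$ is isotropic, or a single black node and no grey node otherwise. Therefore every configuration containing two or more grey or black nodes --- cases (3), (4), (6), (9), (10), (11), (12), (13), (14) in the numbering of Theorem~\ref{Serre-g} --- is impossible. If $\alpha_s$ is non-isotropic no configuration at all applies and only the standard relations remain; if $\alpha_s$ is isotropic the only candidates left are the single-grey-node cases (1), (2), (5), (7), (8).

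Finally I would rule out (5), (7), (8) by inspecting the distinguished diagrams of Appendix~\ref{append-diagrams}. Cases (7) and (8) are full four-node $F(4)$ diagrams carrying a triple bond and belonging to non-distinguished root systems; being rank four and $F(4)$-specific, they coincide with no distinguished diagram and occur as a full sub-diagram of none. Case (5) is a four-node diagram in which a double bond abuts the grey node, and here one must be careful: a grey node does meet a double bond in the distinguished diagram of $B(1,n)$, but with the arrow oriented away from it, so the configuration realised there is (2) rather than (5); elsewhere in types $A$--$D$ the grey node has only single-bond neighbours, and $G(3)$ and $F(4)$ are checked directly. This leaves only (1) and (2), and since the unique odd node forces the two outer nodes $j,k$ to be white, they specialize precisely to the two three-node diagrams of the statement, with the attached relation reducing to $[e_s,[e_{s-1},[e_s,e_{s+1}]]]=0$ and its image under $\omega$. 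I expect this concluding diagram-by-diagram verification --- in particular the arrow-orientation point that separates (2) from (5), and the exclusion of (5), (7), (8) for the exceptional algebras --- to be the only genuinely case-dependent part of the argument, since it rests on the precise bond multiplicities and normalisations recorded in the tables rather than on a uniform principle.
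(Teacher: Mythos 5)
Your route is the same as the paper's: Theorem \ref{Serre-distin} is presented there as the specialization of Theorem \ref{Serre-g} to the distinguished root system, with no independent argument, and most of your execution is sound --- the reduction of \eqref{relations-2g} to \eqref{relations-2} via $a_{sj}=0\iff a_{js}=0$, the elimination of configurations (3), (4), (6), (9)--(14) because they contain at least two non-white nodes, and the exclusion of (5), (7), (8) by inspection of Table 1 (including the arrow-orientation point at $B(1,n)$) are all correct.

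There is, however, one concrete gap at the last step. The distinguished Dynkin diagram of $D(2,1;\alpha)$ is itself of the shape white--grey--white (with $s=1$ the grey node and $b_{1 2}=-2$, $b_{1 3}=-2\alpha$), so for real $\alpha<0$ one has $sgn_{1 2}\,sgn_{1 3}=-1$ and your argument, as written, attaches the configuration-(1) relation $[e_1,[e_2,[e_1,e_3]]]=0$ to this diagram. That relation is false in $D(2,1;\alpha)$: since $[e_2,e_3]=0$, one has $[e_2,[e_1,e_3]]=\pm[[e_1,e_2],e_3]$, and $[e_1,[[e_1,e_2],e_3]]$ spans the root space of $2\delta$, which is nonzero (it is the element $E$ spanning $\fg_2$ in the paper's treatment of $D(2,1;\alpha)$). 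The paper excludes this case by making the labels $b_{i j}$ part of the data of a full sub-diagram when the ambient diagram is of type $D(2,1;\alpha)$, so that the unlabelled configuration (1) never matches, and by the explicit Remark following the list of higher order Serre elements; equivalently, in Theorem \ref{Serre-distin} the sub-diagram is indexed by $s-1,s,s+1$ and $s=1$ here. You need to invoke one of these to rule the case out. (A smaller related point: in $D(2,n)$ the grey node has three white neighbours, so configuration (1) occurs twice and yields relations involving both $e_{s+1}$ and $e_{s+2}$, as the paper's proof of Lemma \ref{key} for $D(2,n)$ in fact uses; the single displayed relation in the statement should be read as ranging over all white neighbours of the grey node.)
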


\begin{remark}
Note the importance of the signs $sgn_{i j}$ in the above theorem.
There are higher order Serre relations associated with the first
Dynkin diagram in \eqref{sl22-osp42}, but none with the second. The
Dynkin diagrams in \eqref{sl22-osp42} are respectively those of
$\mathfrak{sl}_{2|2}$ and $\mathfrak{osp}_{4|2}$ in their
distinguished root systems. The Lie superalgebra
$D(2, 1; \alpha)$ in the distinguished root system has no
higher order Serre relations either.
\end{remark}

\section{Proof of key lemma for distinguished root systems}\label{proof-distinguished}

Throughout this section, we assume that the Cartan matrix
$(A, \Theta)$ is associated with the distinguished root system of a
simple Lie superalgebra. Thus $\Theta$ contains only one element,
which we denote by $s$. To simplify notation, we write $\tilde\fg(A)$ for
$\tilde\fg(A, \Theta)$, $\fg(A)$ for
$\fg(A, \Theta)$, and $L(A)$ for $L(A, \Theta)$

\subsection{The proof}

The proof of Lemma \ref{key} will make essential use of Lemma
\ref{parabolic}. Define the $\Z$-gradings for $\fg(A)$ and $L(A)$ as
in Section \ref{Z-grading} by taking $d=s$.

\begin{lemma}\label{g0}
As reductive Lie algebras, $\fg_0=L_0$.
\end{lemma}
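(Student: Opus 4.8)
The plan is to understand the degree-$0$ component $\fg_0$ concretely. Because we have fixed $d=s$ (the unique odd simple index in the distinguished root system) and assigned $\deg(e_s)=-\deg(f_s)=1$ while all other Chevalley generators have degree $0$, the subalgebra $\fg_0$ is generated by $h_i$ for all $i$ together with $e_i, f_i$ for all $i\ne s$. First I would observe that every generator of $\fg_0$ is \emph{even}: the only odd generators are $e_s, f_s$, and these carry nonzero $\Z$-degree, hence do not lie in $\fg_0$. Therefore $\fg_0$ is an ordinary Lie algebra, not merely a Lie superalgebra, which justifies the phrase ``reductive Lie algebra'' in the statement.

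Next I would identify the defining relations of $\fg_0$ inside $\fg(A,\Theta)$. Among the Serre elements listed in $\mathcal{S}(A,\Theta)$, only those involving generators of degree $0$ survive in $\fg_0$. The standard Serre elements $(\mathrm{ad}_{e_i})^{1-a_{ij}}(e_j)$ with $i,j\ne s$ (and their $f$-analogues) are precisely the Serre relations for the Cartan matrix obtained from $A$ by deleting the $s$-th row and column. The key point is that deleting the odd node from the distinguished Dynkin diagram produces the Dynkin diagram of an ordinary semisimple Lie algebra, and none of the higher order Serre elements of cases \eqref{case-1-ideal}--\eqref{case-13-ideal} involve only degree-$0$ generators in a way that imposes extra relations in degree $0$ --- the higher order elements each contain $e_s$ or $f_s$. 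Thus $\fg_0$ is presented by the classical Serre relations attached to the even sub-Cartan matrix $A^{(s)}$.

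I would then invoke the classical Serre theorem (for which this paper gives an alternative proof, see the remark referenced as \texttt{Lie-algebras}): the Lie algebra with Chevalley generators $\{e_i,f_i,h_i : i\ne s\}\cup\{h_s\}$ and the classical Serre relations for $A^{(s)}$ is exactly the reductive Lie algebra $L_0$, which is the degree-$0$ part of $L(A,\Theta)$ determined by its known root system. Concretely, $L_0$ is the sum of the semisimple part generated by the $e_i,f_i$ ($i\ne s$) and the center coming from the extra torus direction $h_s$; this matches $\fg_0$ generator-for-generator and relation-for-relation. Hence the natural surjection $\fg_0\twoheadrightarrow L_0$ is an isomorphism.

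The main obstacle I expect is the bookkeeping in the second step: one must verify carefully that no higher order Serre element, and no bracket of standard Serre elements with the degree-$0$ generators, secretly imposes a relation purely among the degree-$0$ generators --- equivalently, that the ideal $\fs(A,\Theta)$ meets $\fg_0$ in exactly the classical Serre ideal for $A^{(s)}$ and nothing more. Since every higher order Serre element in the list carries at least one factor $e_s$ or $f_s$ (hence nonzero $\Z$-degree), it contributes nothing to degree $0$; the only subtlety is confirming that the even sub-diagram is genuinely a standard (semisimple) Dynkin diagram so that the classical Serre theorem applies directly, which follows from inspecting the distinguished diagrams in the appendix tables.
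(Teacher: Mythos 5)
Your proposal is correct and follows essentially the same route as the paper: the paper's proof likewise observes that $\fg_0$ and $L_0$ are generated by purely even elements and then invokes Serre's theorem for semi-simple Lie algebras (applied to the derived algebras $\fg_0'$ and $L_0'$) to conclude. Your extra bookkeeping about higher order Serre elements never contributing in degree $0$ is a reasonable elaboration, and the worry you raise about hidden degree-$0$ relations is dispatched by the surjection $\fg_0\twoheadrightarrow L_0$ that you already note.
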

\begin{proof}
In this case, both $\fg_0$ and $L_0$ are generated by purely even
elements. Let $\fg_0'=[\fg_0, \fg_0]$ and $L_0'=[L_0, L_0]$ be the
derived algebras. Then by Serre's theorem for
semi-simple Lie algebras $\fg_0'=L_0'$. Now the claim immediately follows.
\end{proof}

We now consider the $\fg_0$-modules $\fg_1$ and $L_1$.
\begin{remark}
For convenience,  we continue to use $e_i$, $h_i$ and $f_i$ to
denote the images of these elements in $\fg(A)$.
\end{remark}
Examine the following relations in $\fg(A)$:
\begin{eqnarray}\label{g1-irrep}
[h_i, e_s]=a_{i s} e_s, \quad [f_i, e_s]=0, \quad
(ad_{e_i})^{1-a_{i s}} e_s =0, \quad \forall i\ne s.
\end{eqnarray}

The first two relations imply that $e_s$ is a lowest weight vector
of the $\fg_0$-module $\fg_1$, with weight $\alpha_s$. Since $a_{i
s}$ are non-positive integers for all $i\ne s$, by \cite[Theorem 21.4]{H},
the third relation implies  that $\fg_1$ is an
irreducible finite dimensional $\fg_0$-module. The relations
\eqref{g1-irrep} also hold in $L(A)$. This immediately shows the
following result.
\begin{lemma}
Both $\fg_1$ and $L_1$ are irreducible $\fg_0$-modules, and
$\fg_1=L_1$.
\end{lemma}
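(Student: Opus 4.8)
The plan is to read everything off directly from the relations \eqref{g1-irrep}, using the $\Z$-grading with $d=s$ together with Lemma \ref{g0}. First I would establish that $\fg_1$ is generated by $e_s$ as a $\fg_0$-module. Since $e_s$ is the only degree-$1$ generator while all degree-$0$ generators (the $e_i,f_i$ with $i\ne s$ and all $h_i$) lie in $\fg_0$, the non-negatively graded subalgebra $\bigoplus_{k\ge 0}\fg_k$ is generated by $\fg_0$ together with $e_s$. Hence any homogeneous element of degree $1$ is an iterated bracket of degree-$0$ elements with exactly one factor $e_s$, and by repeated use of the Jacobi identity it can be rewritten as $x_1\cdots x_n\cdot e_s$ with $x_j\in\fg_0$ acting via the adjoint representation. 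Thus $\fg_1=U(\fg_0)\,e_s$, and the verbatim argument in $L(A)$ gives $L_1=U(L_0)\,e_s$ with $L_0=\fg_0$ by Lemma \ref{g0}.

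Next I would identify $\fg_1$ as a finite-dimensional irreducible module. The first two relations in \eqref{g1-irrep} say that $e_s$ has weight $\alpha_s$ and is annihilated by every $f_i$ with $i\ne s$, so $e_s$ is a lowest weight vector; as $\fg_1=U(\fg_0)e_s$, the module $\fg_1$ is a lowest weight module of lowest weight $\alpha_s$, hence a quotient of the corresponding Verma module for $\fg_0$. Because the off-diagonal entries $a_{is}$ ($i\ne s$) are non-positive integers, the relation $(ad_{e_i})^{1-a_{is}}e_s=0$ is exactly the integrability condition for the $\mathfrak{sl}_2$-triple $(e_i,h_i,f_i)$ acting on the lowest weight vector $e_s$. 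By \cite[Theorem 21.4]{H} (applied to this lowest weight situation, equivalently to its dual highest weight module) these relations already force $\fg_1$ to be the finite-dimensional irreducible $\fg_0$-module with lowest weight $\alpha_s$. Since the relations \eqref{g1-irrep} hold identically in $L(A)$, the same conclusion gives that $L_1$ is the finite-dimensional irreducible $L_0$-module with lowest weight $\alpha_s$.

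Finally, to conclude $\fg_1=L_1$ I would use the canonical graded surjection $\fg(A)\twoheadrightarrow L(A)$ induced by $\fs(A)\subset\fr(A)$, whose degree-$1$ component is a $\fg_0$-module surjection $\fg_1\twoheadrightarrow L_1$. Here $e_s$ maps to a nonzero element of $L(A)$: if $e_s\in\fr(A)$ then $h_s=[e_s,f_s]\in\fr(A)\cap\fh=0$, a contradiction, so $L_1\ne 0$. As $\fg_1$ is irreducible, the kernel of $\fg_1\twoheadrightarrow L_1$ is a proper submodule and hence zero, yielding the asserted isomorphism $\fg_1=L_1$ and the irreducibility of both.

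I do not expect a serious obstacle: once the grading is fixed, the assertion is essentially a restatement of the finite-dimensional representation theory encoded in \eqref{g1-irrep}. The one point that genuinely needs care is the identification $\fg_1=U(\fg_0)\,e_s$, i.e.\ that brackets involving several copies of $e_s$ and $f_s$ produce no new degree-$1$ elements; this is precisely what the observation that $\bigoplus_{k\ge 0}\fg_k$ is generated by $\fg_0$ and $e_s$ is designed to rule out.
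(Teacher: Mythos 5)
Your proposal is correct and follows essentially the same route as the paper: the paper's proof likewise reads off from the relations \eqref{g1-irrep} that $e_s$ is a lowest weight vector of weight $\alpha_s$, invokes \cite[Theorem 21.4]{H} together with the non-positivity of the $a_{is}$ to get finite-dimensional irreducibility, and notes that the same relations hold in $L(A)$ to conclude $\fg_1=L_1$. You merely make explicit two points the paper leaves tacit — that $\fg_1=U(\fg_0)e_s$ (via the triangular/graded structure) and that $e_s$ survives in $L(A)$ because $h_s=[e_s,f_s]\notin\fr(A)$ — both of which are sound.
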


Note that $\fg_2$ is generated by $\fg_1$, that is $\fg_2=[\fg_1,
\fg_1]$. By induction one can show that $\fg_{k+1}=
\left(ad_{\fg_1}\right)^{k}(\fg_1)$ for all $k\ge 1$. If $\fg_i=0$
for some $i>1$, then $\fg_j=0$ for all $j\ge i$. We have the
$\fg_0$-module decomposition $\fg_1\otimes\fg_1= S^2_s(\fg_1)\oplus
\wedge^2_s(\fg_2)$, where $S^2_s(\fg_1)$ denotes the second
$\Z_2$-graded symmetric power, and $\wedge^2_s(\fg_1)$ the second
$\Z_2$-graded skew power, of $\fg_1$.

\begin{remark}\label{notations-S-wedge}
Throughout the paper, we use $S^k_s(V)$ and $\wedge^k_s(V)$ to
denote the $\Z_2$-graded symmetric and skew symmetric tensors of
rank $k$ in the $\Z_2$-graded vector space $V$, and $S^k(V)$ and
$\wedge^k(V)$ to denote the usual symmetric and skew symmetric
tensors of rank $k$, ignoring the $\Z_2$-grading of $V$.
\end{remark}

We have the following result:
\begin{lemma}\label{g2}
The Lie superbracket defines a surjective $\fg_0$-map
$\fg_1\otimes\fg_1\longrightarrow \fg_2$, $X\otimes Y\mapsto [X,
Y]$. The $\fg_0$-submodule $S_s^2(\fg_1)$ is in the kernel of this
map, and $\wedge^2_s(\fg_1)$ is mapped surjectively onto $\fg_2$.
\end{lemma}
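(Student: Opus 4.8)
The plan is to reduce everything to the single observation that $\fg_1$ is a \emph{purely odd} subspace of $\fg(A)$, after which the graded skew-symmetry of the superbracket turns into ordinary symmetry on $\fg_1$ and the lemma becomes almost formal. First I would verify that $\fg_1\subset\fg(A)_\1$. By construction of the $\Z$-grading (taken with $d=s$), every element of $\fg_1$ is a linear combination of iterated brackets containing exactly one factor $e_s$ and otherwise only the even generators $e_i$ with $i\ne s$; since $e_s$ is odd and the $e_i$ ($i\ne s$) are even, each such bracket is odd. Hence $\fg_1=(\fg_1)_\1$.

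Next I would record the consequence for the bracket. For homogeneous $X,Y\in\fg_1$, both odd, graded skew-symmetry gives $[X,Y]=-(-1)^{|X||Y|}[Y,X]=[Y,X]$, so the bilinear map $m\colon\fg_1\times\fg_1\longrightarrow\fg_2$, $m(X,Y)=[X,Y]$, is symmetric in the ordinary sense. Because $\fg_1$ is purely odd, the $\Z_2$-graded symmetric square $S^2_s(\fg_1)$ coincides, as a subspace of $\fg_1\otimes\fg_1$, with the ordinary exterior square $\wedge^2(\fg_1)$, while $\wedge^2_s(\fg_1)$ coincides with the ordinary symmetric square $S^2(\fg_1)$. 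Since $m$ annihilates every antisymmetric tensor $X\otimes Y-Y\otimes X$, it kills $\wedge^2(\fg_1)=S^2_s(\fg_1)$, giving the containment $S^2_s(\fg_1)\subset\ker m$.

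For surjectivity I would invoke the identity $\fg_2=[\fg_1,\fg_1]$ noted just before the statement, which says precisely that the induced linear map $\fg_1\otimes\fg_1\to\fg_2$ is onto. As $S^2_s(\fg_1)$ lies in the kernel, $m$ descends to a surjection $\wedge^2_s(\fg_1)\longrightarrow\fg_2$. Finally, that $m$ is a $\fg_0$-module map follows from the super Jacobi identity: for $Z\in\fg_0$ the operator $ad_Z$ acts on $\fg_1\otimes\fg_1$ by the graded Leibniz rule and on $\fg_2$ by the adjoint action, and $ad_Z[X,Y]=[ad_Z X,Y]\pm[X,ad_Z Y]$ matches the two actions. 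There is no serious obstacle here; the only point demanding care is the sign bookkeeping in identifying the super (anti)symmetric squares with their ordinary counterparts, which is exactly where the purely odd nature of $\fg_1$ is used.
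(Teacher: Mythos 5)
Your proof is correct and follows essentially the same route as the paper's: the $\fg_0$-equivariance comes from the Jacobi identity, surjectivity from the identity $\fg_2=[\fg_1,\fg_1]$ noted just before the statement, and the kernel claim from the graded skew symmetry of the superbracket. Your additional observation that $\fg_1$ is purely odd (so that $S^2_s(\fg_1)$ is the ordinary exterior square and $\wedge^2_s(\fg_1)$ the ordinary symmetric square) is a correct specialisation valid in the distinguished root system, though the paper's one-line appeal to graded skew symmetry yields the kernel containment for arbitrary parities without it.
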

\begin{proof}
For any $X, Y\in \fg_1$, an element $Z\in\fg_0$ acts on $X\otimes Y$ by
\[
Z\cdot(X\otimes Y)=[Z, X]\otimes Y + X\otimes [Z, Y].
\]
The Lie superbracket maps $Z\cdot(X\otimes Y)$ to $[[Z, X],  Y] +
[X, [Z, Y]] =[Z, [X, Y]]$. This proves the first claim. The second
claim follows from the $\Z_2$-graded skew symmetry of the Lie
superbracket.
\end{proof}

Therefore, the $\fg_0$-map $\Psi: \wedge^2_s(\fg_1)\longrightarrow\fg_2$
defined by the composition
\[
\wedge^2_s(\fg_1)\hookrightarrow\fg_1\otimes\fg_1\stackrel{[\ , \ ]}{\longrightarrow}
\fg_2
\]
is also surjective, where the map on the left is the natural
embedding. The structure of $\wedge^2_s(\fg_1)$ as a $\fg_0$-module
can be understood; this enables us to understand the structure of
$\fg_2$.

Recall that in the distinguished root systems, $L_2=0$ if $L(A)$ is
of type I, and $L_2\ne 0$ but $L_3=0$ if $L(A)$ is of type II. Thus
in order to show that $\fg_k=L_k$ for all $k>0$, it remains to prove
that $\fg_2=0$ if the Cartan matrix $A$ is of type I, and
$\fg_2=L_2$ and $\fg_3=0$ if $A$ is of type II. In view of Lemma
\ref{parabolic}, the proof of Lemma \ref{key} is done once this is
accomplished.

The rest of the proof will be based on a case by case study. Let us start with
the type I Lie superalgebras.

\subsubsection{The case of $\mathfrak{sl}_{m|n}$}

If the Cartan matrix $A$ is that of $\mathfrak{sl}_{m|n}$, the Lie
superalgebra $\fg(A)$ has
$\fg_0=\mathfrak{gl}_m\oplus\mathfrak{sl}_n$, and $\fg_1\cong
\C^m\otimes \overline{\C}^n$ up to parity change, where $\C^m$
denotes the natural module for $\mathfrak{gl}_m$, and
$\overline{\C}^n$ denotes the dual of the natural module for
$\mathfrak{sl}_n$. Assuming that both $m$ and $n$ are greater than
$1$. Then $\wedge^2_s(\fg_1) =S^2(\C^m)\otimes
S^2(\overline{\C}^n)\oplus
\wedge^2(\C^m)\otimes\wedge^2(\overline{\C}^n)$.

The lowest weight vectors of the irreducible submodules are
respectively given by
\[
\begin{aligned}
 v(2):=&e_s\otimes e_s; \\
v(1^2):=&e_{s-1, s+2}\otimes e_{s, s+1} + e_{s, s+1}\otimes e_{s-1, s+2}\\
&- (e_{s-1, s+1}\otimes e_{s, s+2} + e_{s, s+2}\otimes e_{s-1,
s+1}),
\end{aligned}
\]
where $s=m$, and
\[
\begin{aligned}
&e_{s, s+1}=e_{s}, \quad
e_{s, s+2}=[e_{s}, e_{s+1}],\\
&e_{s-1, s+1}=[e_{s-1}, e_{s}],\quad e_{s-1, s+2}=[e_{s-1}, e_{s,
s+2}].
\end{aligned}
\]

We have $\Psi(v(2))=[e_s, e_s]=0$ by one of the standard
Serre relations.  It follows that the entire irreducible
$\fg_0$-submodule $S^2(\C^m)\otimes S^2(\overline{\C}^n)$ is mapped
to zero. In particular, we have
\begin{eqnarray}\label{L2-relation}
[e_{s-1, s+2}, e_{s, s+1}] + [e_{s-1, s+1}, e_{s, s+2}]=0.
\end{eqnarray}

The first term of \eqref{L2-relation} vanishes by the higher order
Serre relation; this in turn forces the second term to vanish as
well. Hence
\[
\Psi(v(1^2))=[e_{s-1, s+2}, e_{s, s+1}] - [e_{s-1, s+1}, e_{s,
s+2}]=0.
\]
Therefore, $v(1^2)$ is in the kernel of $\Psi$, implying that the
entire submodule $\wedge^2(\C^m)\otimes\wedge^2(\overline{\C}^n)$ is
mapped to zero by $\Psi$. This shows that $\fg_2=0$, and hence
$\fg_k=0$ for all $k\ge 2$.

Note that if $min(m, n)=1$, say, $n=1$, $\wedge^2_s(\fg_1)$ is
irreducible as $\fg_0$-module and is equal to $S^2(\C^m)\otimes\C$.
The above proof obviously goes through but in a much simplified fashion.

Therefore, we have proved that
$\fg_k=L_k$ for all $k\ge 2$ in the case $L(A)=\mathfrak{sl}_{m|n}$.

\subsubsection{The case of $C(n+1)$ with $n>1$}

In this case, $\fg_0=\mathfrak{sp}_{2n}\oplus \C$ and
$\fg_1=\C^{2n}$. The $\Z_2$-graded skew symmetric tensor $\wedge^2_s(\fg_1)$ is an
irreducible $\fg_0$-module with the lowest weight vector $e_1\otimes
e_1$. Since $\Psi(e_1\otimes e_1) =[e_1, e_1]=0$ by the standard
Serre relation, it immediately follows that $\fg_k=0$ for all $k\ge
2$.

\subsubsection{The case of $D(m, n)$ with $m>2$} \label{Dm-distinguished}

In this case, $\fg_0=\mathfrak{gl}_n\oplus \mathfrak{so}_{2m}$, and
$\fg_1$ is isomorphic to $\C^n\otimes \C^{2m}$ as $\fg_0$-module (up
to parity) with $e_n$ being the lowest weight vector. Let us first
assume that $n>1$. Then we have
\[
\begin{aligned}
\wedge^2_s(\fg_1)= S^2(\C^n)\otimes
\frac{S^2(\C^{2m})}{\C}\oplus \wedge^2(\C^n)\otimes \wedge^2(\C^{2m})
\oplus S^2(\C^n)\otimes\C.
\end{aligned}
\]
Lowest weight vectors of the first two irreducible submodules can be
explicitly constructed in exactly the same way as in the case of
$\mathfrak{sl}_{m|n}$. The same arguments used there also show that
the Lie superbracket maps both submodules to zero. Hence $\fg_2\cong
S^2(\C^n)\otimes\C$. Inspecting the roots of $D(m, n)$
given in Appendix \ref{roots}, we can see that $\fg_2=L_2$.

Let us examine $\fg_2$ in more detail. We use notation from
Appendix \ref{roots} for roots of the Lie superalgebra $D(m, n)$. Let
$X_{\delta_i\pm\epsilon_p}$, where $1\le i\le n$ and $1\le p\le m$,
be a weight basis of $\fg_1$. Then in $\fg_2$, we have
\begin{eqnarray}\label{D-g2-weight-vectors}
\begin{aligned}
&{[}X_{\delta_i-\epsilon_p}, X_{\delta_j-\epsilon_q}]
={[}X_{\delta_i+\epsilon_p}, X_{\delta_j+\epsilon_q}]=0, \quad \forall i, j, p, q, \\
&{[}X_{\delta_i+\epsilon_p}, X_{\delta_j-\epsilon_q}]=0,  \quad \forall i, j, p\ne q,
\end{aligned}
\end{eqnarray}
and there exist scalars $c_{i j, p q}$ such that
\[
[X_{\delta_i-\epsilon_p}, X_{\delta_j+\epsilon_p}]
=c_{i j, p q}[X_{\delta_i+\epsilon_p}, X_{\delta_j-\epsilon_p}]\ne 0,
\quad \forall i, j, p, q.
\]
By multiplying the elements $X_{\delta_i\pm\epsilon_p}$ by
appropriate scalars if necessary, we may assume
\[
[X_{\delta_i-\epsilon_p}, X_{\delta_j+\epsilon_p}]
=[X_{\delta_i-\epsilon_q}, X_{\delta_j+\epsilon_q}],
\quad  \forall i, j, p, q,
\]
which we denote by $X_{\delta_i+\delta_j}$. Then the subset of
$X_{\delta_i+\delta_j}$ with $1\le i\le j\le n$ forms a basis of
$\fg_2$.

Now we consider $\fg_3$. It immediately follows from
\eqref{D-g2-weight-vectors} that $[X_{\delta_i+\delta_j},
X_{\delta_k\pm\epsilon_p}]=0$ for all $k, p$ and $i\le j$, that is,
\begin{eqnarray}\label{deltax3}
\fg_3=[\fg_1, \fg_2]=0.
\end{eqnarray}
Hence $\fg_k=0$ for all $k\ge 3$.

When $n=1$, the proof goes through much more simply. This completes
the proof of Lemma \ref{key} for the case of $D(m, n)$ with $m>2$.

In contrast to the type I case, the complication here is that
$\fg_3$ needs to be analysed separately as $\fg_2\ne 0$.

\subsubsection{The case of  $D(2, n)$}\label{D2-distinguished}
In this case,
$\fg_0=\mathfrak{gl}_n\oplus\mathfrak{sl}_2\oplus\mathfrak{sl}_2$,
and $\fg_1=\C^n\otimes\C^2\otimes\C^2$. The $\Z_2$-graded skew
symmetric rank two tensor $\wedge^2_s(\fg_1)$ decomposes into the
direct sum of four irreducible $\fg_0$-modules if $n>1$:
\[
\begin{aligned}
\wedge^2_s(\fg_1) = &L^n_{(2)}\otimes L^2_{(2)}\otimes L^2_{(2)}
            \oplus L^n_{(1, 1)}\otimes L^2_{(2)}\otimes L^2_{(0)}\\
            &\oplus L^n_{(1, 1)}\otimes L^2_{(0)}\otimes L^2_{(2)}
            \oplus L^n_{(2)}\otimes L^2_{(0)}\otimes L^2_{(0)}.
\end{aligned}
\]
If $n=1$, then $L^n_{(1, 1)}=0$, the two modules in the middle are absent.

The lowest weight vectors of the first three submodules can be
easily worked out. Below we give the explicit formulae for their
images under the Lie superbracket. Let
\[
\begin{aligned}
&e_{s; s+1} = [e_s, e_{s+1}], \quad e_{s; s+2} = [e_s, e_{s+2}],
\quad e_{s-1; s} = [e_{s-1}, e_s], \\
&e_{s-1; s+1} = [e_{s-1}, e_{s; s+1}], \quad  e_{s-1; s+2} =
[e_{s-1}, e_{s; s+2}].
\end{aligned}
\]
Then the images of the lowest weight vectors are given by
\begin{eqnarray}\label{B-g2}
[e_s, e_s], \quad [e_{s-1; s+1}, e_s]-[e_{s-1; s}, e_{s, s+1}],
\quad [e_{s-1; s+2}, e_s]-[e_{s-1; s}, e_{s, s+2}].
\end{eqnarray}

We have the Serre relation $[e_s, e_s]=0$. This implies that the
entire irreducible submodule $L^n_{(2)}\otimes L^2_{(2)}\otimes
L^2_{(2)}$ is mapped to zero by the Lie superbracket.

In the case $n>1$, this in particular implies
\[
[e_{s-1; s+1}, e_s]+[e_{s-1; s}, e_{s, s+1}]=0, \quad [e_{s-1; s+2},
e_s]+[e_{s-1; s}, e_{s, s+2}]=0.
\]
Note that $[e_{s-1; s+1}, e_s]=0$ and $[e_{s-1; s+2}, e_s]=0$ are
the two higher order Serre relations involving $e_s$. Thus all the
four terms on the left hand sides of the above equations should
vanish separately. It then follows that the second and third
elements in \eqref{B-g2} are zero, that is, the lowest weight
vectors of the irreducible submodules $L^n_{(1, 1)}\otimes
L^2_{(2)}\otimes L^2_{(0)}$ and $L^n_{(1, 1)}\otimes
L^2_{(0)}\otimes L^2_{(2)}$ are in the kernel of the Lie
superbracket. Thus both irreducible submodules are mapped to zero by
the Lie superbracket. The above analysis in vacuous if $n=1$.

Therefore, $\fg_2\cong L^n_{(2)}\otimes L^2_{(0)}\otimes L^2_{(0)}$,
and this shows that $\fg_2\cong L_2$.

To analyse $\fg_3$, we note that equation \eqref{deltax3} still
holds here as can be shown by adapting the arguments in
the $m>2$ case. This completes the proof in this case.

\subsubsection{The case of $B(m, n)$}
When $m\ge 1$, the proof  is much the same as in the case of $D(m,
n)$ with $m>2$. We omit the details.

If $m=0$, then $\fg_0=\mathfrak{gl}_n$, $\fg_1=\C^n$ and $\fg_2\cong
\wedge^2_s(\fg_1)\cong L_2$. Every root vector in $\fg_1$ is of the
form $[X, e_s]$ for some positive root vector $X\in \fg_0$, where
$s=n$. Thus it follows from the relation $(ad_{e_s})^3(e_{s-1})=0$
that $[\fg_1, [e_s, e_s]]=0$. Since $[e_s, e_s]$ is a $\fg_0$ lowest
weight vector of $\fg_2$, this implies $\fg_3=0$.

\begin{remark}\label{osp=so}
The Lie superalgebra $B(0, n)$ is essentially the same as the
ordinary Lie algebra $B_n$. As a matter of fact, the corresponding
quantum supergroup is isomorphic to the smash product of ${\rm
U}_q(B_n)$ with the group algebra of $\Z_2^n$ \cite{Z92, Lan}. The usual proof of
Serre presentations for semi-simple Lie algebras (see, e.g.,
\cite{H}) works for $B(0, n)$. We gave the alternative proof here
for the sake of uniformity.
\end{remark}

\subsubsection{The case of $F(4)$}

Let us order the nodes in the Dynkin diagram from the right to left:
\begin{center}
\begin{picture}(110, 15)(30, -5)
{\color{gray} \put(40, 0){\circle*{10}} }
\put(45, 0){\line(1, 0){20}}
\put(70, 0){\circle{10}}
\put(75, -1){\line(1, 0){20}}
\put(75, 1){\line(1, 0){20}}
\put(75, -3){$<$}
\put(100, 0){\circle{10}}
\put(105, 0){\line(1, 0){20}}
\put(130, 0){\circle{10}}

\put(40, 7){\tiny 4}
\put(70, 7){\tiny 3}
\put(100, 7){\tiny 2}
\put(130, 7){\tiny 1}
\end{picture}.
\end{center}
We may express the simple roots as $\alpha_1=\epsilon_1-\epsilon_2$,
$\alpha_2=\epsilon_2-\epsilon_3$, $\alpha_2=\epsilon_3$ and  $
\alpha_4 = \frac{1}{2}
\Big(\delta-\epsilon_1-\epsilon_2-\epsilon_3\Big)$. The symmetric
bilinear form on the weight space is defined in Appendix
\ref{roots}, where further details about roots of $F(4)$ are given.

The first three simple roots are the standard simple roots of
$\mathfrak{so}_7$, thus $\fg_0=\mathfrak{so}_7\oplus
\mathfrak{gl}_1$. The subspace $\fg_1$ is an irreducible
$\fg_0$-module, which has $e_4$ as a lowest weight vector, and
restricts to the spinor module for $\mathfrak{so}_7$. Now
$\wedge^2_s(\fg_1)$ decomposes into the direct sum of two
irreducibles $\fg_0$-submodules, one of which is $1$-dimensional,
the other is $35$-dimensional with lowest weight vector $e_4\otimes
e_4$.

The Serre relation  $[e_4, e_4]=0$ implies that the $35$-dimensional
submodule is in the kernel of the Lie superbracket, and hence
$\fg_2$ is $1$-dimensional. A basis element for $\fg_2$ is $E=[e_4,
e_{\alpha_4+\epsilon_1+\epsilon_2+\epsilon_3}]$.

For any weight $\beta$ of $\fg_1$, we use $e_\beta\in\fg_1$ to
denote a basis vector of the associated weight space, and set
$e_{\alpha_4}=e_4$. Then we have
\begin{eqnarray}
[e_\beta, E]=0, \quad \text{for all odd positive root $\beta$.}
\end{eqnarray}
This is trivially true for $\beta=\alpha_4$ or
$\alpha_4+\epsilon_1+\epsilon_2+\epsilon_3$.  For
$\beta=\alpha_4+\epsilon_1$ or $\alpha_4+\epsilon_i+\epsilon_j$
($i\ne j$), we have $[e_\beta, E]=[[e_\beta, e_{\alpha_4}],
e_{\alpha_4+\epsilon_1+\epsilon_2+\epsilon_3} ] - [ e_{\alpha_4},
[e_\beta, e_{\alpha_4+\epsilon_1+\epsilon_2+\epsilon_3}] ]$, where
both terms vanish as they involve images in $\fg_2$ of elements in
the $35$-dimensional submodule of $\wedge^2_s(\fg_1)$. Therefore,
$\fg_k=\{0\}$ for all $k\ge 3$.

\subsubsection{The case of $G(3)$}

In this case, $\fg_0$ is isomorphic to the reductive Lie algebra
$G_2\oplus \mathfrak{gl}_1$, and $\fg_1$ is an irreducible
$\fg_0$-module which restricts to the $7$-dimensional irreducible
$G_2$-module. The $\Z_2$-graded skew symmetric tensor $\wedge^2_s(\fg_1)$
decomposes into the direct sum $L(2\alpha_1) \oplus L(0)$ of two
irreducible $\fg_0$-submodules. The submodule
$L(2\alpha_1)$ has $e_1\otimes e_1$ as
lowest weight vector, thus its image under the Lie superbracket is zero
by the Serre relation $[e_1, e_1]=0$.
The submodule $L(0)$ is $1$-dimensional. Since the Lie
superbracket maps $\wedge^2_s(\fg_1)$ surjectively to $\fg_2$, we
immediately conclude that $\dim\fg_2=1$.

Let $X=e_{2\alpha_2+\alpha_3}$ be the root vector of $G_2\subset\fg$
associated with the positive root $2\alpha_2+\alpha_3$. Then
$e^+:=[X, [X, e_1]]$ is the highest weight vector of $\fg_1$ as a
$\fg_0$-module. Since $\fg_2$ is one-dimensional, it must be spanned
by $E=[e_1, e^+]$.

If $e_\beta\in \fg_1$ is a weigh vector not proportional to $e_1$ or
$e^+$, both $[e_\beta, e_1]$ and $[e_\beta, e^+]$ vanish since
they lie in the image of $L(2\alpha_1)\subset \wedge^2_s(\fg_1)$
under the Lie superbracket. Hence $[e_\beta, E]=0$. We also have
$[e^+, e^+]=0$, and the Serre relation $[e_1, e_1]=0$. Thus $[e_1,
E]=[e^+, E]=0$. Therefore, $[\fg_1, E]=0$, which implies
$\fg_k=\{0\}$, for all $ k\ge 3$.

\subsubsection{The case of $D(2, 1; \alpha)$}

We have
$\fg_0=\mathfrak{sl}_2\oplus\mathfrak{sl}_2\oplus\mathfrak{gl}_1$,
and $\fg_1\cong \C^2\otimes\C^2$. The tensor $\wedge^2_s(\fg_1)$
decomposes into the direct sum of two irreducible
$\fg_0$-submodules,
\[
\wedge^2_s(\fg_1) =L_{(2;2)}\oplus L_{(1^2;1^2)}, \quad
L_{(2;2)}=L_{(2)}\otimes L_{(2)}, \quad
L_{(1^2;1^2)}=L_{(1^2)}\otimes L_{(1^2)}.
\]
The notation here only reflects the
$\mathfrak{sl}_2\oplus\mathfrak{sl}_2$-module structure, as there is
no need to specify the $\mathfrak{gl}_1$-action explicitly (see
Remark \ref{D21alpha-viz-D21} below).

We have $\dim L_{(2;2)}=9$ and $\dim L_{(1^2;1^2)}=1$. The lowest
weight vector for $L_{(2;2)}$ is $v(2)=e_1\otimes e_1$. Let
\[
\begin{aligned}
e_{--}=e_1, \quad e_{+-}=[e_1, e_2], \quad e_{-+}=[e_1, e_3], \quad
e_{++}=[e_{+-}, e_3], \\
v(1^2)=e_{--}\otimes e_{++} + e_{++} \otimes e_{--}
       -e_{+-}\otimes e_{-+}-e_{-+} \otimes e_{+-}.
\end{aligned}
\]
Then the vector $v(1^2)$ spans $L_{(1^2;1^2)}$.

The Lie superbracket maps $L_{(2;2)}$ to zero because
$[e_1, e_1]=0$. Note that $[e_{--}, e_{++}] +[e_{+-},
e_{-+}]$ belongs to the image of $L_{(2;2)}$, thus is zero. Hence
$\fg_2$ is spanned by $E=[e_{--}, e_{++}]$. Now it is easy to show
that $[E, \fg_1]=0$.

\begin{remark}\label{D21alpha-viz-D21}
This proof is essentially the same as that in the case of $D(2, 1)$,
except for that the $\mathfrak{gl}_1$ subalgebra of $\fg_0$ acts on
$\fg_1$ by different scalars in the two cases. However, this scalar
is not important in the proof of Lemma \ref{key}, and that is the
reason why we did not specify it explicitly.
\end{remark}

\subsection{Comments on the proof}\label{comments-proof}

Let us recapitulate the proof of Lemma \ref{key} in the distinguished root
systems.
\begin{enumerate}
\item \label{comments-1} By Lemma \ref{parabolic},  the proof of Lemma \ref{key}
    is reduced to showing that the
    parabolic subalgebras  $\fg(A)_{\ge 0}$ and $L(A)_{\ge 0}$
    are the same.

\item \label{comments-2} The elements $\{h_s\}\cup\{h_i, e_i, f_i \mid i\ne
s\}$  and those defining relations of $\fg(A)$ obeyed by them
    give a Serre presentation for the reductive Lie algebra
    $\fg_0$. Then it essentially follows from Serre's theorem
    that $\fg_0=L_0$, see Lemma \ref{g0}.

\item \label{comments-3} Given item (\ref{comments-2}), it suffices to show that
    $\fg(A)_{>0}=\oplus_{k>0}\fg_k$ and
    $L(A)_{>0}=\oplus_{k>0}L_k$ are isomorphic as
    $\fg_0$-modules.

\item Equation \eqref{g1-irrep} gives the necessary and sufficient conditions
    for $\fg_1$ to be a finite dimensional irreducible
    $\fg_0$-module with lowest weight $\alpha_s$, hence
    $\fg_1=L_1$ as $\fg_0$-modules.

\item The standard and higher order Serre relations involving $e_s$ are
    conditions imposed on $\fg_0$-lowest weight vectors of
    $[\fg_1, \fg_1]$, which are the necessary and sufficient to
    guarantee that $\fg_2=L_2$.

\item  The fact that $\fg_3=0$ follows (trivially in the type I case) from
    the result on $\fg_2$ and graded skew symmetry of the Lie
    superbracket, thus no additional relations are required. The
    vanishing of $\fg_3$ implies that for all $k\ge 3$,
    $\fg_k=0$, and hence $\fg_k=L_k$.
\end{enumerate}

In non-distinguished root systems, one can still prove Lemma
\ref{key} by following a similar strategy, as we shall see in the
next section. However, there are important differences in several
aspects.

There are many such $\Z$-gradings as defined in
    Section \ref{Z-grading} for the Lie superalgebras
    $\fg(A, \Theta)$ and $L(A, \Theta)$. This works
    to our advantage.

Given any such $\Z$-grading $\fg(A, \Theta)=\oplus_{k\in\Z}\fg_k$,
    the degree zero subspace $\fg_0$ forms a Lie superalgebra,
    which is not an ordinary Lie in general. Thus the
    requirement that $\fg_1$ be an irreducible $\fg_0$-module is
    much more difficult to implement, and usually leads to
    unfamiliar higher order Serre relations.

In general $\fg_3\ne 0$. In order for $\fg_k$ to be equal
    to $L_k$ for $k\ge 3$, higher order Serre relations are
    needed at degree $k\ge 3$.

\section{Proof of key lemma for non-distinguished root systems}
\label{proof-non-distinguished}

In this section we prove Lemma \ref{key} in non-distinguished root
systems by following a similar strategy as that in Section
\ref{proof-distinguished}. In particular, Lemma \ref{parabolic} will
be used in an essential way.

Assume that the Cartan matrix $A$ is of size $r\times r$. Fix a positive integer
$d\le r$, we consider the corresponding $\Z$-gradings for
$\fg(A, \Theta)$ and $L(A, \Theta)$ defined in Section
\ref{Z-grading}. We shall first establish that $\fg_0=L_0$. Since the roots of
$L(A, \Theta)$ are known explicitly (see Appendix \ref{roots}),
we have a complete
understanding of the $\fg_0$-module structure of every $L_k$. Thus
once we have a description of the weight spaces of each $\fg_k$ as
$\fg_0$-module for all $k>0$, an easy comparison with the root spaces
of $L_k$ will enable us to prove the key lemma.

\begin{remark}
In the proof of Lemma \ref{key} given below, we shall only describe
the weight spaces of $\fg_k$ ($k>0$), and leave out the easy step of
comparing them with those of $L_k$ in most cases.
\end{remark}

For convenience, we introduce the parity map $p: \{1, 2, \dots, r\}
\longrightarrow \{0, 1\}$ such that $p(i)=1$ if $i\in\Theta$ and
$p(i)=0$ otherwise. Then $e_i$ and $f_i$ are odd if $p(i)=1$, and
even if $p(i)=0$.

\subsection{Proof in type $A$}\label{A-nondistingushed}

We use induction on the rank $r$ together with the help of Lemma
\ref{parabolic} to prove Lemma \ref{key} and
Theorem \ref{Serre-g}.

If $r=2$, the Dynkin diagram in the non-distinguished root system
has two grey nodes. In this case, there exists no relation between
$e_1$ and $e_2$, and $[e_1, e_2]$ is another positive root vector.
Note that $[e_1, [e_1, e_2]]=0$ and $[e_2, [e_1, e_2]]=0$ by the
graded skew symmetry of the Lie superbracket. Thus Lemma \ref{key}
is valid and $\fg(A, \Theta)=L(A, \Theta)$

When $r>2$, we take $d=r$. Then $\fg'_0=[\fg_0, \fg_0]$ is a special linear
superalgebra of rank $r-1$ by the induction hypothesis, and thus
$\fg_0$ is a general linear superalgebra.

Define the following elements of $\fg_0$:
\begin{eqnarray}\label{matrix-units}
X_{i j} = ad_{e_i}\cdots
ad_{e_{j-2}}(e_{j-1}),\quad  i<j\le r,
\end{eqnarray}
where $X_{j, j+1}=e_j$. In view of the general linear superalgebra
structure of $\fg_0$, we conclude that $\fg_1$ is isomorphic to the
irreducible $\fg_0$-module with lowest weight $\alpha_r$ (which is
in fact the natural module possibly upon a parity change) if and
only if
\[
[X_{i k}, [X_{j r}, e_r]]=0, \quad j\ne k.
\]
By using the $\fg_0$-action, we can show that these conditions are
equivalent to the relation
\begin{eqnarray} \label{gl-HS}
{[}e_{r-1}, [[e_{r-2}, e_{r-1}], e_r]]=0
\end{eqnarray}
and the relevant relations in \eqref{g-tilde}. For $p(r-1)=1$,
\eqref{gl-HS} is a higher order Serre relation associated with the
sub-diagram
\begin{picture}(75, 20)(-5,-3)
\put(0, -3){$\times$}
\put(5, 0){\line(1, 0){20}}
{\color{gray} \put(30, 0){\circle*{10}}}
\put(35, 0){\line(1, 0){20}}
\put(55, -3){$\times$}
\put(0, 7){\tiny r-2}
\put(24, 7){\tiny r-1}
\put(55, 7){\tiny r}
\end{picture}
with $sgn_{r-2, r-1}=-sgn_{r-1, r}$.
If $p(r-1)=0$, it can be derived from
\begin{eqnarray}\label{gl-HS1}
[e_{r-1}, [e_{r-1}, e_r]]=0,
\end{eqnarray}
which is a standard Serre relation.

Consider $\wedge^2_s\fg_1$, which is an irreducible
$\fg_0$-module. The lowest weight vector is
\[
\begin{aligned}
&e_r\otimes e_r, \quad \text{if $p(r)=1$}, \quad \text{or} \\
&e_r\otimes [e_{r-1}, e_r] - [e_{r-1}, e_r]\otimes e_r, \quad
\text{if $p(r)=0$}.
\end{aligned}
\]
Thus $\fg_2=0$ if and only if
\begin{eqnarray}\label{gl-g2}
\begin{aligned}
&[e_r, e_r]=0, \quad \text{if $p(r)=1$}, \quad \text{or} \\
&[e_r, [e_r, e_{r-1}]]=0, \quad \text{if $p(r)=0$},
\end{aligned}
\end{eqnarray}
both of which are standard Serre relations. This proves that Lemma
\ref{key}, and hence Theorem \ref{generating}, are valid at rank
$r$.

\begin{remark}\label{Lie-algebras}
The proof presented here includes an alternative proof for Serre's
theorem in the case of $\mathfrak{sl}_n$. This can be generalised to
all finite dimensional simple Lie algebras. In particular, the proof
for the other classical Lie algebras can be extracted
from the next two sections.
\end{remark}
\subsection{Proof in type $B$}\label{B-nondistingushed}

Consider the first Dynkin diagram of type $B$ in Table 2, where the
last (that is, $r$-th) node is white, and take $d=r$. In this case,
$\fg_0$ is a general linear superalgebra, and we have already
obtained a Serre presentation for it in Section
\ref{A-nondistingushed}.

We require $\fg_1$ be isomorphic to the irreducible $\fg_0$-module
with lowest weight $\alpha_r$, which is in fact the natural module
for $\fg_0$. This is achieved by relations formally the same as
\eqref{gl-HS} or \eqref{gl-HS1}.

As $\fg_0$-module,  $\fg_2$ is isomorphic to $\wedge_s^2\fg_1$,
which is irreducible with the lowest weight vector $E:= [e_r, [e_r,
e_{r-1}]]$. Now $\fg_3=0$ if and only if $[E, \fg_1]=0$. This in
particular requires that
\begin{eqnarray}\label{B-S}
\left(ad_{e_r}\right)^3(e_{r-1})=0.
\end{eqnarray}
We shall show that this in fact is the necessary and sufficient
condition.

If $p({r-1})=1$, then $[E, [e_{r-1}, e_r]]=0$ trivially since
$[e_{r-1}, e_{r-1}]=0$ in $\fg_0$. For $K=[e_{r-2}, [e_{r-1},
e_r]]$, we also have $[K, E]=0$. This follows from $[K, e_{r-1}]=0$,
which is one of the higher order Serre relations associated with a
sub-diagram of type $A$. Applying $ad_{e_r}$ to it twice and using
\eqref{B-S}, we obtain the desired relation. These relations imply
that $[X, E]=0$ for all $X\in \fg_1$ in this case. If $p({r-1})=0$,
the  fact that $[X, E]=0$, for all $X\in \fg_1$, follows from
\[
[[e_{r-1}, e_r], [[e_{r-1}, e_r], e_r]]=0,
\]
which can be derived from \eqref{B-S}.

The other Dynkin diagram (where the last node is black) can be
treated in essentially the same way. We omit the details.

\subsection{Proof in types $C$ and $D$}\label{CD-nondistingushed}

The Dynkin diagrams of type $C$ formally have the same forms as two of the
Dynkin diagrams of $D$. The only difference is in the numbers of grey nodes,
see Remark \ref{rem:C-D-diagrams}. This enables us to treat both types of Lie
superalgebras simultaneously.

\subsubsection{Case 1}

Consider the Dynkin diagram
\begin{center}
\begin{picture}(130, 20)(-5,-5)
\put(0, -3){$\times$}
\put(6, 0){\line(1, 0){20}}
\put(24, -3){$\times$}
\put(30, 0){\line(1, 0){10}}
\put(41, 0){\dots}
\put(54, 0){\line(1, 0){10}}
\put(62, -3){$\times$}
\put(68, 0){\line(1, 0){20}}
\put(86, -3){$\times$}
\put(94, 1){\line(1, 0){20}}
\put(94, -1){\line(1, 0){20}}
\put(95, -3.2){$<$}
\put(120, 0){\circle{10}}
\end{picture}.
\end{center}
We label the nodes from left to right, thus $r$-the node is the one
at the right end. Set $d=r$, then $\fg_0$ is a general linear
superalgebra.

As a $\fg_0$-module, $\fg_1$ is generated by $e_r$. We require it be
isomorphic to the irreducible module $\overline{L}_{\alpha_r}$ with
lowest weight $\alpha_r$. Appendix \ref{S2-gl} describes the
structure of the generalised Verma module $\overline{V}_{\alpha_r}$
with lowest weight $\alpha_r$ and the irreducible quotient
$\overline{L}_{\alpha_r}$. We immediately
see that the relevant relations in
\eqref{g-tilde} and the relations
\begin{eqnarray}\label{C1-HS}
{[} X_{i r}, [X_{j r}, [X_{k r}, e_r]]]=0, \quad \forall i\le j\le k\le r-1,
\end{eqnarray}
are necessary and sufficient conditions to guarantee that
$\fg_1\cong \overline{L}_{\alpha_r}$. Here $X_{i r}$ are elements
of $\fg_0$ defined by \eqref{matrix-units}. The conditions
\eqref{C1-HS} are equivalent to
\begin{eqnarray}
\begin{aligned}
&[e_{r-1}, [e_{r-1}, [e_{r-1}, e_r]]]=0,
&\quad& \text{if $e_{r-1}$ is even},\\
&{[} X_{r-2, r}, [X_{r-2, r}, [e_{r-1}, e_r]]]=0,
&\quad& \text{if $e_{r-1}$, $e_{r-2}$ are both odd}, \\
&{[} X_{r-3, r}, [X_{r-2, r}, [e_{r-1}, e_r]]]=0,
&\quad& \text{if $e_{r-1}$ is odd, $e_{r-2}$ is even}
\end{aligned}
\end{eqnarray}
because of the $\fg_0$-action. Here Remark \ref{D-small-r} is also
in force.

Note that the different situations where the relations apply are
mutually exclusive. The first relation is a standard Serre relation.
The second and third are higher order Serre relations respectively
associated with the sub-diagrams
\begin{center}
\begin{picture}(80, 20)(5, 5)
{\color{gray}\put(10, 10){\circle*{10}} }
\put(15, 10){\line(1, 0){20}}
{\color{gray}\put(40, 10){\circle*{10}} }
\put(45, 11){\line(1, 0){20}}
\put(45, 9){\line(1, 0){20}}
\put(46, 7){$<$}
\put(70, 10){\circle{10}}
\end{picture}
\quad or \quad
\begin{picture}(120, 20)(-25, 5)
\put(-20, 6.5){$\times$}
\put(-15, 10){\line(1, 0){20}}
\put(10,
10){\circle{10}}
\put(15, 10){\line(1, 0){20}}
{\color{gray}\put(40, 10){\circle*{10}} }
\put(45, 11){\line(1, 0){20}}
\put(45, 9){\line(1, 0){20}}
\put(46, 7){$<$}
\put(70, 10){\circle{10}}
\put(76, 7){.}
\end{picture}
\end{center}

Recall that $\fg_2$ is the image of $\wedge^2_s\fg_1$ under the Lie
superbracket. As $\fg_0$-module, $\wedge^2_s\fg_1$ is irreducible
with the lowest weight vector $e_r\otimes [e_{r-1}, e_r] -[e_{r-1},
e_r]\otimes e_r$. Thus $\fg_2=0$ if and only if
\begin{eqnarray}
[e_r, [e_r, e_{r-1}]]=0.
\end{eqnarray}
This is again a standard Serre relation.

\subsubsection{Case 2}

Now we consider the case with the Dynkin diagram
\begin{center}
\begin{picture}(128, 35)(-5,-15)
\put(0, -3){$\times$}
\put(6, 0){\line(1, 0){20}}
\put(24, -3){$\times$}
\put(30, 0){\line(1, 0){10}}
\put(41, 0){\dots}
\put(54, 0){\line(1, 0){10}}
\put(62, -3){$\times$}
\put(68, 0){\line(1, 0){20}}
\put(86, -3){$\times$}
\put(111, 17){\line(-1, -1){16}}
\put(111, -17){\line(-1, 1){16}}
{\color{gray}\put(117, 15){\circle*{10}}}
\put(116,-10){\line(0, 1){19}}
\put(118,-10){\line(0, 1){19}}
{\color{gray}\put(117, -16){\circle*{10}}}
\end{picture}.
\end{center}

Let us first assume that $r=3$.  We have the Dynkin diagram of
$\mathfrak{osp}_{2|4}$ (resp. $\mathfrak{osp}_{4|2}$) if $p(1)=0$
(resp. $p(1)=1$). Label by $1$ the node marked by $\times$, and take
$d=1$. The diagram obtained by deleting this node is
\begin{center}
\begin{picture}(45, 15)(0,-5)
{\color{gray}\put(5, 0){\circle*{10}}}
\put(10,-1){\line(1, 0){20}}
\put(10,1){\line(1, 0){20}}
{\color{gray}\put(35, 0){\circle*{10}}}
\end{picture}.
\end{center}
This is a non-standard diagram of
$\mathfrak{osp}_{2|2}\cong\mathfrak{sl}_{2|1}$. Equation
\eqref{g-tilde} by itself suffices to define this Lie superalgebra.

Now $\fg_0=\mathfrak{osp}_{2|2}\oplus\mathfrak{gl}_1$ (isomorphic to
$\mathfrak{gl}_{2|1}$). Let $\overline{\fb}_0$ be the Borel
subalgebra of $\fg_0$ generated by $f_2, f_3$ and all $h_i$, and
define the lowest weight Verma module
$\overline{V}_{\alpha_1}:=U(\fg_0)\otimes_{U(\fb^-_0)}\C_{\alpha_1}$
for $\fg_0$,  where $\C_{\alpha_1}$ is the irreducible
$\overline{\fb}_0$-module with lowest weight $\alpha_1$. Direct
computations show that the maximal submodule $M_{\alpha_1}$ is
generated by the vector $(e_2 e_3 - e_3 e_2)\otimes 1$. The
irreducible quotient $\overline{L}_{\alpha_1}$ is four dimensional,
with a basis consisting of the images of $1\otimes 1$, $e_2 \otimes
1$, $e_3 \otimes 1$, and $[e_2, e_3]\otimes 1$. Its restriction to
$\mathfrak{osp}_{2|2}$ is the natural module.

We need $\fg_1\cong\overline{L}_{\alpha_1}$, possibly up to a parity
change depending on the parity of $e_1$. From the description of
$\overline{V}_{\alpha_1}$ and $M_{\alpha_1}$ above, we see that the
necessary and sufficient conditions are the relevant quadratic
relations involving $e_1$ in \eqref{g-tilde}, and
\begin{eqnarray}\label{D-triangle}
[e_2, [e_3, e_1]]-[e_3, [e_2, e_1]]=0.
\end{eqnarray}
Note that this is a higher order Serre relation associated with the
sub-diagram (\ref{HOS-g-6}) given in Theorem \ref{Serre-g}.

To proceed further, we need to specify the parity of $e_1$.

If $e_1$ is even, the Lie superalgebra $L(A, \Theta)$ is
$\mathfrak{osp}_{2|4}$. Now $\wedge^2_s\fg_1$ is the direct sum of a
seven dimensional indecomposable $\fg_0$-submodule and a one
dimensional $\fg_0$-submodule. The seven dimensional submodule is
generated by the two lowest weight vectors
\[
e_1\otimes [e_2, e_1] - [e_2, e_1]\otimes e_1,
\quad e_1\otimes [e_3, e_1]-  [e_3, e_1]\otimes e_1,
\]
and the one dimensional submodule by
\[
[e_2, e_1]\otimes[e_3, e_1]+ [e_3, e_1]\otimes[e_2, e_1]
+ e_1\otimes [[e_2, e_3], e_1] -  [[e_2, e_3], e_1]\otimes e_1.
\]
In this case, we need $\fg_2$ to be isomorphic to a one dimensional
$\fg_0$-module with weight $2\alpha_1+\alpha_2+\alpha_3$. Thus the
seven dimensional indecomposable submodule of $\wedge^2_s\fg_1$ is
sent to zero by the Lie superbracket, or equivalently,
\begin{eqnarray}\label{D-dim7}
[e_1, [e_1, e_2]]=0, \quad [e_1, [e_1, e_3]]=0,
\end{eqnarray}
which are standard Serre relations.  The image of the one
dimensional submodule is $\fg_2$, which is spanned by
\[
[[e_1, e_2], [e_1, e_3]] - [e_1, [e_1, [e_2, e_3]]]=-[[e_1, e_2], [e_1, e_3]],
\]
where \eqref{D-dim7} is used to obtain the identity.
By using \eqref{D-triangle} and \eqref{D-dim7}, one can easily show that
$
 [\fg_2, \fg_1]=0,
$
and hence $\fg_3=0$.

If $e_1$ is odd,  the Lie superalgebra $L(A, \Theta)$ is
$\mathfrak{osp}_{4|2}$. By dimension counting, we need $\fg_2=0$.
Now $\wedge^2_s\fg_1$ is also a direct sum of a seven dimensional
indecomposable $\fg_0$-submodule and a one dimensional submodule.
Given the condition $[e_1, e_1]=0$, the seven dimensional submodule
vanishes automatically under the Lie superbracket, and the image of
the one dimensional submodule is spanned by $[[e_1, e_2], [e_1,
e_3]]$. Taking the Lie superbraket of $e_1$ with both sides of
\eqref{D-triangle}, we obtain $[[e_1, e_2], [e_1, e_3]]=0$. Hence
$\fg_2=0$.

Now assume $r\ge 4$. We take $d=r-3$, then $\fg_0$ is the direct sum
of a general linear superalgebra and $\mathfrak{osp}_{4|2}$ or
$\mathfrak{osp}_{2|4}$.

If $e_{r-2}$ is even, the condition that $\fg_1$ is an irreducible
$\fg_0$-module of lowest weight $\alpha_{r-3}$ is given by the
relevant relations in \eqref{g-tilde},
\[
[e_{r-2},[e_{r-2}, e_{r-3}]]=0,
\]
and also
\begin{eqnarray}\label{gl-in-B}
\begin{aligned}
&[e_{r-4}, [e_{r-4}, e_{r-3}]]=0, &\quad& \text{if }\  p(r-4)=0, \\
&[e_{r-4}, [e_{r-5}, [e_{r-4}, e_{r-3}]]]=0, &\quad& \text{if }\  p(r-4)=1.
\end{aligned}
\end{eqnarray}

As $\fg_0$-module, $\wedge_s^2\fg_1$ is the direct sum of three
irreducibles. The $\mathfrak{osp}_{2|4}$ subalgebra of $\fg_1$ acts
trivially on one of the irreducible submodules, and $\fg_2$ is
isomorphic to it. The necessary and sufficient condition for the Lie
superbracket to annihilate the other two irreducible submodules is
\begin{eqnarray}\label{CD-2-g2}
\begin{aligned}
&{[}e_{r-3}, [e_{r-3}, e_{r-2}]]=0, \quad [e_{r-3},
[e_{r-3}, e_{r-4}]]=0,&& \text{if } p(r-3)=0,\\
&{[}e_{r-3}, e_{r-3}]=0, \quad [e_{r-3}, [e_{r-4},
[e_{r-3}, e_{r-2}]]]=0,&& \text{if } p(r-3)=1,
\end{aligned}
\end{eqnarray}
as can be shown by examining lowest weight vectors of the
submodules.

\begin{remark} \label{relations-from-gl}
Let $E=[[e_{r-3}, e_{r-2}], e_{r-1}]$ and $E'=[[e_{r-3}, e_{r-2}],
e_r]$.  Then at least one of the vectors $[X, E]$ and $[X, E']$
vanishes for any $X\in\fg_1$.
\end{remark}

Let $v$ denote a lowest weight vector of $\fg_2$. We can take $v=[E,
E']$ if $e_{r-3}$ is even, and $v=[[e_{r-4}, E], E']$ if $e_{r-3}$
is odd. Then by Remark \ref{relations-from-gl}, we have $[v, X]=0$
for any $X\in\fg_1$. Hence $\fg_3=0$.

If $e_{r-2}$ is odd, the condition that $\fg_1$ is an irreducible
$\fg_0$-module of lowest weight $\alpha_{r-3}$ translates into the
relations \eqref{gl-in-B},
\[
\begin{aligned}
&[e_{r-2}, [[e_{r-2},e_{r-1}], e_{r-3}]]=0, \\
&[e_{r-2}, [[e_{r-2},e_{r}], e_{r-3}]]=0,
\end{aligned}
\]
plus the relevant relations in \eqref{g-tilde}.  Here we have used
some facts about generalised Verma modules for
$\mathfrak{osp}_{4|2}$.

As $\fg_0$-module, $\wedge^2_s\fg_1$ is again a direct sum of three
irreducibles. One of them restricts to a direct sum of one
dimensional $\mathfrak{osp}_{4|2}$-modules, and $\fg_2$ is
isomorphic to it. The other two irreducibles are both mapped to zero
by the Lie superbracket. The necessary and sufficient condition for
this to happen is still \eqref{CD-2-g2}.

Note that Remark \ref{relations-from-gl} remains valid in the
present case if we define $E$ and $E'$ in the same way. Let $v=[E,
E']$ if $e_{r-3}$ is odd, and $v=[[e_{r-4}, E], E']$ if $e_{r-3}$ is
even. Then $v$ is a nonzero lowest weight vector of $\fg_2$. It
follows from Remark \ref{relations-from-gl} that $[v, X]=0$ for any
$X\in\fg_1$. Hence $\fg_3=0$.

\subsubsection{Case 3}

Finally we consider the Dynkin diagram
\begin{center}
\begin{picture}(125, 50)(-5,-25)
\put(0, -3){$\times$}
\put(6, 0){\line(1, 0){20}}
\put(24, -3){$\times$}
\put(30, 0){\line(1, 0){10}}
\put(41, 0){\dots}
\put(54, 0){\line(1, 0){10}}
\put(62, -3){$\times$}
\put(68, 0){\line(1, 0){20}}
\put(86, -3){$\times$}
\put(117, 15){\circle{10}}
\put(111, 17){\line(-1, -1){16}}
\put(111, -17){\line(-1, 1){16}}
\put(117, -16){\circle{10}}
\put(125, -20){, }
\end{picture}
\end{center}
assuming that there are at least two grey nodes (as otherwise this
would correspond to the distinguished root system of type $D$). This
forces $r\ge 4$.

This case is quite easy, thus we shall be brief. We choose $d$ to be
the largest integer such that $p(d)=1$. Then $\fg_0$ is the direct
sum of a general linear superalgebra and an even dimensional
orthogonal Lie algebra.

From Section \ref{A-nondistingushed}, we see that the necessary and
sufficient conditions for $e_d$ (which must be odd) to generate an
irreducible $\fg_0$-module are the relevant relations in
\eqref{g-tilde} and the higher order Serre relation involving $e_d$
associated with the following sub-diagram
\begin{picture}(70, 20)(-5,-5)
\put(-1, -3){$\times$}
\put(5, 0){\line(1, 0){20}}
{\color{gray}\put(25, 0){\circle*{10}}}
\put(30, 0){\line(1, 0){20}}
{\color{gray} \put(55, 0){\circle*{10}}}
\put(20, 7){\tiny d-1}
\put(55, 7){\tiny d}
\end{picture}
of the Dynkin diagram if $p(d-1)=1$.  Note that if $d=2$, this
becomes vacuous.

As $\fg_0$-module, $\fg_1$ is the tensor product of the natural
modules $V_A$ and $V_D$ respectively for the general linear
superalgebra and orthogonal algebra contained in $\fg_0$. Here $V_D$
is purely even, and the grading of $V_A$ gives rise to the grading
of $\fg_1$.

Now $\wedge^2_s\fg_1\cong \wedge_s^2(V_A)\otimes
\big(S^2(V_D)/\C\big)\oplus S_s^2(V_A)\otimes \wedge^2(V_D)\oplus
\wedge_s^2(V_A)\otimes\C$ as $\fg_0$-module. The images of the first
two irreducibles under the Lie superbracket are set to zero by the
relation $[e_d, e_d]=0$ and the higher order Serre relation(s)
associated with the sub-diagram(s) of the form
\begin{picture}(75, 20)(-5,-3)
\put(0, -3){$\times$}
\put(5, 0){\line(1, 0){20}}
{\color{gray} \put(30, 0){\circle*{10}}}
\put(35, 0){\line(1, 0){20}}
\put(60, 0){\circle{10}}
\put(0, 7){\tiny d-1}
\put(24, 7){\tiny d}
\put(65, -3){.}
\end{picture}
Note that if $d<r-2$, there is only one such diagram, but there are
two if $d=r-2$, as the last node can be $(r-1)$ or $r$. We have
$\fg_2\cong \wedge_s^2(V_A)\otimes\C$.

One can show that $[\fg_2, \fg_1]=0$ by using the same arguments as
those in Section \ref{Dm-distinguished} and Section
\ref{D2-distinguished}, thus $\fg_3=0$.

\subsection{Proof in type $F(4)$}\label{F4-nondistingushed}
Now we turn to $F(4)$, which is considerably more complicated than
the other type of Lie superalgebras.
\subsubsection{Case 1}
Consider first the root system corresponding to the Dynkin diagram

\begin{center}
\begin{picture}(100, 15)(-5,-5)
\put(0, 0){\circle{10}}
\put(5, 2){\line(1, 0){20}}
\put(5, 0){\line(1, 0){20}}
\put(5, -2){\line(1, 0){20}}
\put(15, -3.25){$>$}
{\color{gray}\put(30, 0){\circle*{10}}}
\put(35, 1){\line(1, 0){20}}
\put(35, -1){\line(1, 0){20}}
\put(35, -3.25){$<$}
\put(60, 0){\circle{10}}
\put(65, 0){\line(1, 0){20}}
\put(90, 0){\circle{10}}

\put(0, 7){\tiny 1}
\put(30,7){\tiny 2}
\put(60, 7){\tiny 3}
\put(90, 7){\tiny 4}

\put(96, -3){.}
\end{picture}
\end{center}
We take $d=2$. Then $\fg_0=\mathfrak{sl}_2\oplus \mathfrak{gl}_3$.
The standard Serre relations plus the relevant relations in
\eqref{g-tilde} are the necessary and sufficient conditions
rendering the $\fg_0$-module $\fg_1$ irreducible. We have
$\fg_1\cong \C^2\otimes \C^3$ up to a parity change.

As $\fg_0$-module, $\wedge^2_s \fg_1$ is a direct sum of two
irreducibles. The condition $[e_2, e_2]=0$ forces one of the
irreducibles to be in the kernel of the map $\wedge^2_s
\fg_1\longrightarrow\fg_2$. Thus $\fg_2$ is an irreducible
$\fg_0$-module generated by the lowest weight vector $E=[[e_1, e_2],
[e_2, e_3]]$. We have $\fg_2=\C\otimes \wedge^2(\C^3)$.

Now $\fg_3=[\fg_2, \fg_1]\cong \C^2\otimes\C$  with a basis
consisting of vectors $[E, [e_2, [e_3, e_4]]  ]$ and $[E, [E',
e_4]]$, where $E'=[e_1, [e_2, e_3]]$. One immediately sees that
\[
[\fg_3, e_2]=\C[E, [E, e_4]],
\]
which generates $\fg_4=\C\otimes\C^3$.

To consider $\fg_5$, we only need to look at $[\fg_4, \fg_1]$. If
$X\in \fg_1$ is any lowest weight vector for $\mathfrak{sl}_2\subset
\fg_0$, the higher order Serre relation associated with the Dynkin
diagram (see diagram (\ref{HOS-g-7}) in Theorem \ref{Serre-g})
renders $[\fg_4, X]=0$. Since the $\mathfrak{sl}_2$ subalgebra of
$\fg_0$ acts trivially on $\fg_4$, it follows that $[\fg_4,
\fg_1]=0$, that is, $\fg_5=0$.

\subsubsection{Case 2}

For the Dynkin diagram
\begin{center}
\begin{picture}(100, 20)(-5,-5)
\put(0, 0){\circle{10}}
\put(5, 2){\line(1, 0){20}}
\put(5, 0){\line(1, 0){20}}
\put(5, -2){\line(1, 0){20}}
\put(15, -3.25){$>$}
{\color{gray}\put(30, 0){\circle*{10}}}
\put(35, 0){\line(1, 0){20}}
\put(60, 0){\circle{10}}
\put(65, 1){\line(1, 0){20}}
\put(65, -1){\line(1, 0){20}}
\put(65, -3.25){$<$}
\put(90, 0){\circle{10}}

\put(0, 7){\tiny 1}
\put(30,7){\tiny 2}
\put(60, 7){\tiny 3}
\put(90, 7){\tiny 4}

\put(96, -3){,}
\end{picture}
\end{center}
we also take $d=2$ as in the previous case. Then
$\fg_0=\mathfrak{gl}_2\oplus\mathfrak{sp}_4$. The relevant relations
in \eqref{g-tilde} and standard Serre relations guarantee that $e_2$
generates an irreducible $\fg_0$-module, which is isomorphic to the
tensor product $\C^2\otimes\C^4$ of the natural modules for
$\mathfrak{gl}_2$ and $\mathfrak{sp}_4$ up to a parity change.

Now $\wedge^2_s\fg_1$ decomposes into the direct sum of three
irreducible $\fg_0$-modules, which are respectively isomorphic to
$S^2(\C^2)\otimes S^2(\C^4)$, $\wedge^2(\C^2)\otimes
\left(\wedge^2(\C^2)/\C\right)$ and $\wedge^2(\C^2)\otimes\C$. The
necessary and sufficient conditions for the Lie superbracket to map
the first and the third submodules to zero are $[e_2, e_2]=0$ and
the higher order Serre relation
\begin{eqnarray}\label{hos-F4-2}
[[e_1, e_2], [[e_2, e_3], [e_3, e_4]]
-[[e_2, e_3], [[e_1, e_2], [e_3, e_4]]=0
\end{eqnarray}
associated with the Dynkin diagram (see diagram (\ref{HOS-g-8}) in
Theorem \ref{Serre-g}).  Now $\fg_2$ is isomorphic to
$\wedge^2(\C^2)\otimes \frac{\wedge^2(\C^2)}{\C}$ with lowest weight
vector
\[
E=[[e_1, e_2], [e_2, e_3]].
\]
Formally $[\fg_2, \fg_1]$  decomposes into the direct sum of two
irreducibles, respectively having lowest weight vectors
\[
[E, e_2], \quad [e_2, [E, [e_3, e_4]]].
\]
The first vector vanishes by $[e_2, e_2]=0$. The second vector is
the supercommutator of $e_2$ with the left hand side of
\eqref{hos-F4-2}, thus is also zero. This shows that $\fg_3=0$.

\subsubsection{Case 3}

Consider the Dynkin diagram
\begin{center}
\begin{picture}(125, 20)(-5,0)
{\color{gray}\put(0, 0){\circle*{10}}}
\put(5, 0){\line(1, 0){20}}
{\color{gray}\put(30, 0){\circle*{10}}}
\put(35, 1){\line(1, 0){20}}
\put(35, -1){\line(1, 0){20}}
\put(35, -3.25){$<$}
\put(60, 0){\circle{10}}
\put(65, 0){\line(1, 0){20}}
\put(90, 0){\circle{10}}
\put(96, 0){.}
\put(0, 6){\tiny 4}
\put(30, 6){\tiny 3}
\put(60, 6){\tiny 2}
\put(90, 6){\tiny 1}

\put(15, 5){\tiny -}
\put(45, 5){\tiny +}
\end{picture}
\end{center}
We take $d=4$, and delete the 4-th node from the diagram to obtain
\begin{center}
\begin{picture}(100, 15)(30,-5)
{\color{gray}\put(30, 0){\circle*{10}}}
\put(35, 1){\line(1, 0){20}}
\put(35, -1){\line(1, 0){20}}
\put(35, -3.25){$<$}
\put(60, 0){\circle{10}}
\put(65, 0){\line(1, 0){20}}
\put(90, 0){\circle{10}}
\put(96, 0){.}
\put(30, 6){\tiny 3}
\put(60, 6){\tiny 2}
\put(90, 6){\tiny 1}
\end{picture}
\end{center}
This is a non-standard diagram for $\mathfrak{sl}_{1|3}$, where the
double edges can be got rid of by a normalisation of the bilinear
form on the weight space thus are immaterial. The presentation for
$\mathfrak{sl}_{1|3}$ involves no higher order Serre relation. We
have $\fg_0=\mathfrak{gl}_{1|3}$.

Let $\overline\fp$ be the lower triangular maximal parabolic
subalgebra of $\fg_0$ with Levi subalgebra
$\fl:=\mathfrak{gl}_{3}\oplus \mathfrak{gl}_1$. Let
$\overline{L}^0_{\alpha_4}=\C v_0$ be the $1$-dimensional
$\overline\fp$-module with lowest weight $\alpha_4$, which is assume
to be a purely odd superspace. Since $\alpha_4$ is a typical $\fg_0$
weight, the generalised Verma module $\overline{V}_{\alpha_4} =
U(\fg_0)\otimes_{U(\overline\fp)}\overline{L}^0_{\alpha_4}$ is
irreducible, i.e.,
$\overline{L}_{\alpha_4}=\overline{V}_{\alpha_4}$. It is
multiplicity free, and the set of weights is given by
\begin{eqnarray}\label{weights-g1}
\Delta^+\backslash \left\{\Delta^+(\fg_0)\cup \Delta^+_2\right\},
\end{eqnarray}
where $\Delta^+$ is the set of the positive roots of $F(4)$ relative
to the Borel subalgebra under consideration, $\Delta^+(\fg_0)$ is
the set of the positive roots of the subalgebra $\fg_0$, and
\begin{eqnarray} \label{weights-g2}
\Delta^+_2=\left\{\frac{1}{2}(\delta+\epsilon_1+\epsilon_2+\epsilon_3),
\ \epsilon_i + \epsilon_j, i\ne j\right\}.
\end{eqnarray}

The $\fg_0$-module $\wedge^2_s \overline{L}_{\alpha_4}$ is not
semi-simple. To avoid the laborious task of determining the
indecomposable submodules, we simply examine the $\fl$ lowest weight
vectors in $\wedge^2_s \overline{L}_{\alpha_4}$. Of particular
importance to us are the vectors
\[
\begin{aligned}
z_1:=  &v_0\otimes v_0; \\
z_2:=  &e_3 v_0\otimes [e_2, e_3] e_3 v_0 -
        [e_2, e_3] e_3 v_0\otimes e_3 v_0;\\
z_3:= & v_0 \otimes e_3 v_0 - e_3 v_0 \otimes v_0;\\
w_1:=  & v_0 \otimes [e_2, e_3] e_3 v_0 + [e_2, e_3] e_3 v_0 \otimes v_0; \\
w_2:=  &v_0\otimes [e_1, [e_2, e_3]] [e_2, e_3] e_3 v_0 -
        [e_1, [e_2, e_3]] [e_2, e_3] e_3 v_0\otimes v_0.
\end{aligned}
\]

The space of $\fl$-lowest weight vectors of $\wedge^2_s
\overline{L}_{\alpha_4}$ is spanned by $w_1$, $w_2$ and the
$\fl$-lowest weight vectors in the $\fg_0$-submodule $M$ generated
by $z_1$ and $z_2$. It is important to observe that $w_1$ and $w_2$
are not in $M$, but $w_1\in U(\fg_0)w_2$. Furthermore, one can
verify that $\wedge^2_s \overline{L}_{\alpha_4}/M$ is multiplicity
free with the set of weights $\Delta_2^+$.

Now we take $v_0=e_4$ and require $\overline{\fp}$ act on it by the
adjoint action. Then $\fg_1=\overline{L}_{\alpha_4}$. We require
that the Lie superbracket maps $z_1$ and $z_2$ to zero. This leads
to the following relations:
\begin{eqnarray}\label{F4-1}
\begin{aligned}
&[e_4, e_4]=0; \\
&{[} [e_3, e_4], [[e_3, e_4], [e_2, e_3]]  ]=0.
\end{aligned}
\end{eqnarray}
Under the first condition, the Lie superbracket automatically maps
$z_3$ to zero.
Note that the second relation in equation \eqref{F4-1} is the
desired higher order Serre relation associated with the sub-diagram
\begin{center}
\begin{picture}(125, 15)(-10,-5)
{\color{gray}\put(0, 0){\circle*{10}}}
\put(5, 0){\line(1, 0){20}}
{\color{gray}\put(30, 0){\circle*{10}}}
\put(35, 1){\line(1, 0){20}}
\put(35, -1){\line(1, 0){20}}
\put(35, -3.25){$<$}
\put(60, 0){\circle{10}}
\put(68, -5){.}
\end{picture}
\end{center}

The vectors $w_1$ and $w_2$ have non-zero images under the Lie
superbracket, and we have $\fg_2\cong \wedge^2_s
\overline{L}_{\alpha_4}/M$. By considering the possible $\fl$-lowest
weight vectors, we can show that $[\fg_1, \fg_2]=0$, thus $\fg_3=0$.

Now the proof of Lemma \ref{key} in this case is completed by
comparing the weights in \eqref{weights-g1} and \eqref{weights-g2}
with the roots in $L_1$ and $L_2$.

\subsubsection{Case 4}
Consider the Dynkin diagram
\begin{center}
\begin{picture}(100, 40)(30,-10)
\put(30, 0){\circle{10}}
\put(35, 1){\line(1, 0){20}}
\put(35, -1){\line(1, 0){20}}
\put(45, -3.5){$>$}
{\color{gray}\put(60, 0){\circle*{10}}}
\put(60, 5){\line(1, 1){10}}
\put(65, 1){\line(1, 0){20}}
\put(65, -1){\line(1, 0){20}}
{\color{gray}\put(90, 0){\circle*{10}}}
\put(90, 5){\line(-1, 1){10}}
\put(75, 15){\circle{10}}

\put(30, -12){\tiny 1}
\put(60, -12){\tiny 2}
\put(90, -12){\tiny 3}
\put(75, 23){\tiny 4}

\put(100, 0){.}
\end{picture}
\end{center}
Take $d=1$, then $\fg_0=\mathfrak{osp}_{2|4}\oplus\mathfrak{gl}_1$.
The presentation of $\mathfrak{osp}_{2|4}$ relative to the Dynkin
diagram
\begin{center}
\begin{picture}(55, 30)(50,-5)
{\color{gray}\put(60, 0){\circle*{10}}}
\put(60, 5){\line(1, 1){10}}
\put(65, 1){\line(1, 0){20}}
\put(65, -1){\line(1, 0){20}}
{\color{gray}\put(90, 0){\circle*{10}}}
\put(90, 5){\line(-1, 1){10}}
\put(75, 15){\circle{10}}
\end{picture}
\end{center}
has been constructed, thus the defining relations among $e_i, f_i,
h_i$ for $i> 1$ are all known. The parabolic subalgebra of
$\mathfrak{osp}_{2|4}$ defined in Appendix \ref{osp24-module}
together with the ideal $\mathfrak{gl}_1$ form a parabolic of
$\fg_0$. Then $e_1$ spans a $1$-dimensional module for this
parabolic, which induces a generalised Verma module
$\overline{V}_{\alpha_1}$ of lowest weight type for $\fg_0$. The
structure of $\overline{V}_{\alpha_1}$ can be understood by using
results of Section \ref{osp24-module}. In particular, imposing the
condition \eqref{generator-M-C}, which in the present case reads
\begin{eqnarray}\label{generator-M-C-1}
[e_2, [[e_2, e_3], e_1]]=0,
\end{eqnarray}
sends $\overline{V}_{\alpha_1}$ to the irreducible quotient, which
is $\fg_1$. Note that \eqref{generator-M-C-1} is a higher order
Serre relation associated with
diagram (\ref{HOS-g-9}) in Theorem \ref{Serre-g}. It is a
non-standard diagram of $\mathfrak{sl}_{1|3}$.

Now $\fg_1$ forms is $10$-dimensional. A basis for it can be deduced
from Section \ref{osp24-module}. For every vector $b$ in this basis,
we have $[b, e_1]=0$. This holds trivially for most basis vectors,
but for $b=[e_2, [[[e_1, e_2], e_3], e_4]]$, we have
\[
\begin{aligned}
{[}e_1, b] &= [ [[e_1, e_2], e_3], [[e_1, e_2], e_4]]\\
            &=\frac{1}{2} \left(ad_{e_1}\right)^2[ [e_2, e_3], [e_2, e_4] ].
\end{aligned}
\]
One can deduce from the defining relations for
$\mathfrak{osp}_{2|4}$ that $[ [e_2, e_3], [e_2, e_4] ]$  $=0$,
hence $[b, e_1]=0$. This implies that the commutator of $e_1$ with
all the remaining basis vectors are all zero. Therefore,
$\fg_2=[\fg_1, \fg_1]=0$.

\subsubsection{Case 5}
In the case of the Dynkin diagram
\begin{center}
\begin{picture}(60, 50)(50,-25)
\put(45, 0){\circle{10}}
\put(50, 1){\line(1, 0){20}}
\put(50, -1){\line(1, 0){20}}
\put(60, -3.5){$>$}
{\color{gray}\put(75, 0){\circle*{10}}}
\put(98, 19){\line(-1, -1){18}}
\put(98, 15){\line(-1, -1){16}}
\put(98, -19){\line(-1, 1){18}}
{\color{gray}\put(102, 15){\circle*{10}}}
\put(100,-10){\line(0, 1){19}}
\put(102,-10){\line(0, 1){19}}
\put(104,-10){\line(0, 1){19}}
{\color{gray}\put(102, -16){\circle*{10}}}

\put(45, 7){\tiny 1}
\put(75, 7){\tiny 2}
\put(102, 23){\tiny 3}
\put(102, -28){\tiny 4}

\put(108, -16){,}
\end{picture}
\end{center}
we take $d=4$ and delete the $4$-th node to obtain the diagram
\begin{center}
\begin{picture}(75, 20)(40,-3)
\put(45, 0){\circle{10}}
\put(50, 1){\line(1, 0){20}}
\put(50, -1){\line(1, 0){20}}
\put(60, -3.5){$>$}
{\color{gray}\put(75, 0){\circle*{10}}}
\put(80, 1){\line(1, 0){20}}
\put(80, -1){\line(1, 0){20}}
{\color{gray}\put(105, 0){\circle*{10}}}

\put(45, 7){\tiny 1}
\put(75, 7){\tiny 2}
\put(105,7){\tiny 3}

\put(112, -2){,}
\end{picture}
\end{center}
which is a non-standard diagram of $\mathfrak{sl}_{1|3}$. Thus we
have a relation formally the same as \eqref{generator-M-C-1}.

Now $\fg_0=\mathfrak{gl}_{1|3}$. The Verma module of lowest weight
type for $\fg_0$ generated by $e_4$ contains the primitive vector
$2[e_2, [e_3, e_4]] - 3[[e_2, e_3], e_4]$, which generates the
maximal submodule. Thus the higher order Serre relation
\begin{eqnarray}\label{relation-HOS-g-10}
2[e_2, [e_3, e_4]] - 3[[e_3, e_2], e_4]=0,
\end{eqnarray}
associated with diagram (\ref{HOS-g-10}) in Theorem \ref{Serre-g},
is all that is needed to guarantee that $\fg_1$ is an irreducible
$\fg_0$-module. This module is typical relative to the distinguished
Borel subalgebra, and has dimension $8$.

Restricted to a module for $\mathfrak{gl}_3\subset \fg_0$,  the even subspace of $\fg_1$
is the direct sum of the natural $\mathfrak{gl}_3$-module and a $1$-dimensional module,
while the odd subspace is the direct sum of the dual natural module
(twisted by a scalar) and a $1$-dimensional module.

Now consider $[\fg_1, \fg_1]$.  We can easily work out its decomposition into
irreducible $\mathfrak{gl}_3$-submodules.  The corresponding  $\mathfrak{gl}_3$
lowest weight vectors can be worked out, which include the following vectors:
\[
[e_2, e_2], \quad \left(ad_{[e_2, e_4]}\right)^2 [e_2, e_1], \quad [[e_2, e_4], [e_3, e_4]].
\]
It follows from the higher order Serre relation \eqref{relation-HOS-g-10}
that
\[
[[e_2, e_4], [e_3, e_4]]=0.
\]
Now we impose the relations
\[
[e_2, e_2]=0, \quad \left(ad_{[e_2, e_4]}\right)^2 [e_2, e_1]=0,
\]
where the first is a standard Serre relation, and the second is a
higher order Serre relations associated with
\begin{center}
\begin{picture}(75, 20)(40,-3)
\put(45, 0){\circle{10}}
\put(50, 1){\line(1, 0){20}}
\put(50, -1){\line(1, 0){20}}
\put(60, -3.5){$>$}
{\color{gray}\put(75, 0){\circle*{10}}}
\put(80, 0){\line(1, 0){20}}
{\color{gray}\put(105, 0){\circle*{10}}}

\put(45, 7){\tiny 1}
\put(75, 7){\tiny 2}
\put(105,7){\tiny 4}

\put(112, -2){.}
\end{picture}
\end{center}
Under these conditions, all other $\mathfrak{gl}_3$ lowest vectors
in $[\fg_1, \fg_1]$ vanish, except
\[
\left(ad_{[e_2, e_4]}\right)^2 e_1, \quad [[e_3, e_4], [[e_1, e_2], [e_2, e_4]]],
\]
where the first one is actually a $\fg_0$ lowest weight vector. It generates an $4$-dimensional
irreducible $\fg_0$-module containing the second vector. This module is isomorphic to the dual of the
natural $\fg_0$-module twisted by a scalar. This gives us $\fg_2=[\fg_1, \fg_1]$.
We can further show that $[\fg_2, e_4]=0$, hence $\fg_3=0$.

\subsection{Proof in type $G(3)$}\label{G3-nondistingushed}
\subsubsection{Case 1}\label{G3-case-1}

Consider the Dynkin diagram
\begin{center}
\begin{picture}(100, 20)(25,-5)
{\color{gray}\put(30, 0){\circle*{10}}}
\put(35, 0){\line(1, 0){20}}
{\color{gray}\put(60, 0){\circle*{10}}}
\put(65, 2){\line(1, 0){20}}
\put(65, 0){\line(1, 0){20}}
\put(65, -2){\line(1, 0){20}}
\put(65, -3.25){$<$}
\put(90, 0){\circle{10}}

\put(30, 7){\tiny 1}
\put(60, 7){\tiny 2}
\put(90, 7){\tiny 3}

\put(100, 0){.}
\end{picture}
\end{center}
We take $d=3$, then $\fg_0=\mathfrak{gl}_{2|1}$. Let
$\overline{V}_{\alpha_3}$ be the lowest weight Verma module for
$\fg_0=\mathfrak{gl}_{2|1}$ with lowest weight $\alpha_3$. Denote by
$v_0$ the lowest weight vector, which is assumed to be even. Then
the maximal submodule of $\overline{V}_{\alpha_3}$ is generated by
$e_1 v_0$ and $e_2[e_1, e_2]^3 v_0$. The irreducible quotient
$\overline{L}_{\alpha_3}$ is multiplicity free and has weights
\[
\begin{aligned}
&\alpha_3+ k(\alpha_1+\alpha_2), && k=0, 1, 2, 3, \\
&\alpha_3+ p(\alpha_1+\alpha_2)+\alpha_2, && p=0, 1, 2.
\end{aligned}
\]

In fact $\overline{L}_{\alpha_3}$ is isomorphic to the third
$\Z_2$-graded symmetric power of the natural module for $\fg_0$
tensored with a $1$-dimensional module. Thus $\wedge^2_s
\overline{L}_{\alpha_3}$ is completely reducible; it is the direct
sum of two irreducibles.

Now we take $v_0$ to be $e_3$, and let $\fg_0$ act on it by the
adjoint action.  Then the generators of the maximal submodule of
$\overline{V}_{\alpha_3}$ in this case are $\left(ad_{[e_1,
e_2]}\right)^3 [e_2, e_3]$ and $[e_1, e_3]$. Thus $[e_1, e_3]=0$ and
the higher order Serre relation
\begin{eqnarray}\label{G3-1}
\left(ad_{[e_1, e_2]}\right)^3 [e_2, e_3]=0
\end{eqnarray}
(associated with diagram (\ref{HOS-g-11}) in Theorem \ref{Serre-g})
render $\fg_1=\overline{L}_{\alpha_3}$.

One of the irreducible submodules of $\wedge^2_s \fg_1$ has a
lowest weight vector of the form $e_3 \otimes [e_2, e_3] -   [e_2, e_3]\otimes
e_2$. We require that this submodule be in the kernel of the Lie
superbracket. This leads to the standard Serre relation $[e_3, [e_3,
e_2]]=0$.

The other irreducible submodule of $\wedge^2_s \fg_1$ is mapped
surjectively onto $\fg_2$. A lowest weight vector of $\fg_2$ is given by
$\overline{X}:=ad_{e_3}\left(ad_{[e_1, e_2]}\right)^2 [e_2, e_3]$.
This irreducible module is $4$-dimensional and has weights
\[
-(2\epsilon_3-\epsilon_1-\epsilon_2),
\ \delta+\epsilon_2-\epsilon_3,
\ \delta+\epsilon_1-\epsilon_3,
\ 2\delta,
\]
in the notation explained in Appendix \ref{roots}.
It is easy to see that $[X, \overline{X}]=0$ for all $X\in\fg_1$.
Thus $\fg_3=0$.

By examining the weights of $\fg_1$ and $\fg_2$, we see that Lemma
\ref{key} holds.

\subsubsection{Case 2}

Consider the Dynkin diagram
\begin{center}
\begin{picture}(100, 20)(25,-5)
\put(30, 0){\circle*{10}}
\put(35, 1){\line(1, 0){20}}
\put(35, -1){\line(1, 0){20}}
\put(35, -3.25){$<$}
{\color{gray}\put(60, 0){\circle*{10}}}
\put(65, 2){\line(1, 0){20}}
\put(65, 0){\line(1, 0){20}}
\put(65, -2){\line(1, 0){20}}
\put(65, -3.25){$<$}
\put(90, 0){\circle{10}}

\put(30, 7){\tiny 1}
\put(60, 7){\tiny 2}
\put(90, 7){\tiny 3}

\put(100, 0){.}
\end{picture}
\end{center}
We take $d=1$, and delete the first node from the Dynkin diagram to
obtain
\begin{center}
\begin{picture}(50, 15)(55,-5)
{\color{gray}\put(60, 0){\circle*{10}}}
\put(65, 2){\line(1, 0){20}}
\put(65, 0){\line(1, 0){20}}
\put(65, -2){\line(1, 0){20}}
\put(65, -3.25){$<$}
\put(90, 0){\circle{10}}
\put(98, -3){.}
\end{picture}
\end{center}
This is a nonstandard diagram for $\mathfrak{sl}_{1|2}$, which can
be cast into the usual Dynkin diagram
of $\mathfrak{sl}_{1|2}$ in the distinguished root system by normalising the bilinear form on the
weight space. Note that no higher order Serre relations are required
to present this Lie superalgebra. We have
$\fg_0=\mathfrak{gl}_{1|2}$.

Now the $\fg_0$ Kac module of lowest weight type generated by $e_1$
is typical thus irreducible, hence $\fg_1\cong
\overline{L}_{\alpha_1}$ with basis
\[
e_1, \quad [e_2, e_1], \quad  [[e_2, e_3], e_1], \quad [[e_2, e_3], [e_2, e_1]].
\]
As $\fg_0$-module $\wedge^2_s \fg_1$ is the direct sum of two
irreducible typical submodules, respectively generated by the lowest
weight vectors
$e_1\otimes e_1$ and $v - \frac{1}{2}v'$,
where
\[
\begin{aligned}
v= e_1\otimes [[e_2, e_3], [e_2, e_1]] + [[e_2, e_3], [e_2, e_1]]\otimes e_1, \\
v'=[e_2, e_1]\otimes [e_3, [e_2, e_1]]-  [ e_3, [e_2, e_1]]\otimes [e_2, e_1].
\end{aligned}
\]
We require that $v - \frac{1}{2}v'$ and thus the $\fg_0$-submodule
generated by it be mapped to zero by the Lie superbracket. This
leads to
\[
[[e_2, e_1], [e_3, [e_2, e_1]] - [[e_2, e_3], [[e_1, e_1], e_2]]=0,
\]
which is one of the higher order Serre relations associated with the
Dynkin diagram (see diagram (\ref{HOS-g-12}) in Theorem
\ref{Serre-g}). Therefore, $\fg_2\cong \overline{L}_{2\alpha_1}$ and
has a basis
\[
[e_1, e_1], \quad [[e_1, e_1], e_2],
\quad  [e_3, [[e_1, e_1], e_2]],
\quad [ [e_2, e_3], [[e_1, e_1], e_2] ].
\]

Now we consider $[\fg_2, \fg_1]$. One can easily see that
$\left(ad_{e_1}\right)^3 e_2$ is a $\fg_0$ lowest weight vector. We
require that the $\fg_0$-submodule generated by it be zero, hence we
have the standard Serre relation
\[
\left(ad_{e_1}\right)^3 e_2=0.
\]
This leaves $\fg_3=[\fg_2, \fg_1]$ to be an indecomposable
$\fg_0$-module cyclically generated by the lowest weight vector $[
[e_1, [e_2, e_3]], [[e_1, e_1], e_2] ]$, which is $7$-dimensional
and multiplicity free. One can easily write down a basis for this
module. We should remark that no $\fg_0$ lowest weight vector in
$\fg_3$ is annihilated by all $f_i$ for $i=1, 2, 3$.

One can show by direct computations that $[e_1, \fg_3]=0$ and
$[[e_1, e_1], \fg_2]=0$. Hence $\fg_4=0$.

An inspection of the weight spaces of $\fg_i$ for $1\le i \le 3$
shows that they agree with those of $L_i$ for $1\le i \le 3$. This
completes the proof in this case.

\subsubsection{Case 3}

The final case of $G(3)$ is the diagram
\begin{center}
\begin{picture}(90, 70)(70, -35)
\put(75, 0){\circle{10}}
\put(80, 1){\line(1, -1){20}}
\put(80, -1){\line(1, -1){20}}
\put(82, -2){\line(2, -1){10}}
\put(82, -2){\line(1, -2){5}}
{\color{gray}\put(105,-20){\circle*{10}}}

\put(103, 15){\line(0, -1){30}}
\put(105, 15){\line(0, -1){30}}
\put(107, 15){\line(0, -1){30}}

{\color{gray}\put(105, 20){\circle*{10}}}

\put(100, 20){\line(-1, -1){19}}

\put(75, 7){\tiny 1}
\put(105, 27){\tiny 2}
\put(105,-35){\tiny 3}

\put(115, -20){.}
\end{picture}
\end{center}
We take $d=3$, then $\fg_0=\mathfrak{gl}_{2|1}$.  The $\fg_0$ Kac
module of lowest weight generated by $e_3$ is atypical. We set the
primitive vector to zero to obtain
\[
2[[e_1, e_2], e_3]-[e_2, [e_1, e_3]]=0,
\]
which is a higher order Serre relation in the present case. Then $\fg_1$ is
an irreducible $\fg_0$-module with lowest weight $\alpha_3$, which is
isomorphic to the third $\Z_2$-graded symmetric power of the natural module
for $\fg_0$ twisted by a scalar. It has
$3$ odd and $4$ even dimensions. A basis for $\fg_1$ is given by
\[
\begin{aligned}
&e_3, \quad [e_1, e_3], \quad  [e_1, [e_1, e_3]], \quad [e_2, e_3],\\
&[[e_1, e_2], e_3],\quad  [[e_1, e_2], [e_1, e_3]],
\quad [[e_1, e_2], [e_1, [e_1, e_3]]].
\end{aligned}
\]

The rest of the analysis is similar to Section \ref{G3-case-1}. Now
$\wedge^2_s\fg_1$ is the direct sum of two irreducible
$\fg_0$-submodules. The images of theirs lowest weight vectors in
$[\fg_1, \fg_1]$ are repectively $[e_3, e_3]$ and $E=[ [e_1, e_3],
[e_1, e_3] ]$. Both generate typical $\fg_0$-submodules, which
respectively have dimensions $20$ and $4$. The standard Serre
relation $[e_3, e_3]=0$ removes the $20$-dimensional submodule, thus
$\fg_2$ is the $4$-dimensional irreducible $\fg_0$-module generated
by $E$.

We can also show that $\fg_3=0$ without imposing further relations.
Inspecting the weights of $\fg_1$ and $\fg_2$, we see that
the claim of Lemma \ref{key} indeed holds.

\subsection{Proof in type $D(2, 1; \alpha)$}\label{D21alpha-nondistingushed}

The Dynkin diagrams having only one grey node can be treated in
exactly the same way as for the distinguished root system, thus we
shall consider only the diagram with three gray nodes here. Set
$d=3$, then $\fg_0=\mathfrak{gl}_{2|1}$. The $\fg_0$ Verma module of
lowest weight type generated by $e_3$ contains the primitive vector
\[
\alpha [e_1, [e_2, e_3]] + (1+\alpha)[e_2, [e_1, e_3]],
\]
which in fact generates the maximal submodule. The higher order
Serre relation requires this vector to be zero. This is equivalent
to taking the irreducible quotient of the Verma module, and we
obtain $\fg_1$. A basis for $\fg_1$ is
\[
e_3, \quad [e_1, e_3], \quad [e_2, e_3], \quad [e_1, [e_2, e_3]].
\]
An easy computation using the higher order Serre relation shows that
$[\fg_1, e_3]=0$. Hence $\fg_2=0$. A quick inspection on the weights
of $\fg_1$ shows that Lemma \ref{key} indeed holds in this case.

\section{Remarks on affine Lie superalgebras}\label{affine}

We wish to mention that the generalisation of the method to affine
Lie superalgebras is in principle straightforward conceptually.
Consider, for example, the untwisted affine superalgebra $\hat{\fg}$
of a contragredient Lie superalgebra $\fg$. We want to present
$\hat{\fg}$ with the standard generators $e_i, f_i, h_i$ with $0\le
i\le r$ and relations. Here the generators $e_i, f_i, h_i$ with
$1\le i\le r$ are those for $\fg$. By results of earlier sections,
we may assume that all the Serre relations and higher order ones
obeyed by $e_i$ and $f_i$ with $1\le i\le r$ are given.

We introduce the standard $\Z$-grading of $\hat{\fg}$  by
decreeing that all $h_j$ and $e_i, f_i$ with $1\le i\le r$ have
degree $0$, but $e_0$ and $f_0$ have degrees $1$ and $-1$
respectively. Then $\hat{\fg}=\oplus_{k\in\Z} \hat{\fg}_k$,
with $\hat{\fg}_0=\fg\oplus\mathfrak{gl}_1$. Now we
require that as $\hat{\fg}_0$-modules, all $\hat{\fg}_k$ are
isomorphic to $\fg$. The (necessary and sufficient) conditions
meeting this requirement give rise to the defining relations of
$\hat{\fg}$.

To illustrate how this may work, we consider the untwisted affine
algebra $\hat{\fg}=\hat{\mathfrak{sl}}_{r+1}$. The relations
\[
[e_1, [e_1, e_0]]=0, \quad [e_r, [e_r, e_0]]=0, \quad [e_i, e_0]=0, \ i\ne 1, r
\]
arise from the requirement that $\hat{\fg}_1$ be an irreducible
$\hat{\fg}_0$-module. In $[\hat{\fg}_1, \hat{\fg}_1]$, there are
$\hat{\fg}_0$ lowest weight vectors $[[e_1, e_0], e_0]$ and $[[e_r,
e_0], e_0]$, which have weights different from any roots of
$\fg={\mathfrak{sl}}_{r+1}$. Thus the condition that $\hat{\fg}_2$
is isomorphic to $\fg$ as $\fg_0$-module requires
\[
[[e_1, e_0], e_0]=0, \quad  [[e_r, e_0], e_0]=0.
\]
Now we have derived at all the Serre relations needed for $e_0$, and
those for $f_0$ can be similarly obtained. Together with relations
defining $\fg$, these relations define $\hat{\fg}$.

We hope to treat the affine superalgebras on another occasion.

\begin{appendix}
\section{Dynkin diagrams}\label{Dynkin-diagrams}

We describe the Dynkin diagrams for both the distinguished and
non-distinguished root systems in this Appendix. The
roots of all the simple contragedient Lie superalgebras will also be listed
\cite{Kac1, Kac2}.

\subsection{Roots}\label{roots}
Let $\epsilon_i$ ($i=1, 2, \dots, k$) and
$\delta_j$ ($j=1, 2, \dots, l$) be a basis of a real vector space
$E(k, l)$ equipped with a non-degenerate symmetric bilinear form.
Then for each simple contragredient Lie superalgebra $\fg$, the dual space
$\fh^*$ of the cartan subalgebra is either $\C\otimes_{\R} E(k, l)$ for
appropriate $k, l$ or a subspace thereof, which inherits
a non-degenerate bilinear form that is  Weyl group invariant.

For the series $A$, $B$, $C$ or $D$, the bilinear form is defined by
\[
(\epsilon_i, \epsilon_{i'})=\delta_{i i'}, \quad (\delta_j,
\delta_{j'})=-\delta_{j j'}, \quad (\epsilon_i, \delta_j)=0,
 \quad \forall i, i', j, j'.
\]

The roots of the simple contragredient Lie superalgebras can be
described as follows.

\medskip
\noindent $A(m|n)$:
\[
\begin{aligned}
    \Delta_0 &=\{\epsilon_i-\epsilon_{i'} \mid i, i'\in[1, m+1],  i\ne i'\}
    \cup\{\delta_j-\delta_{j'} \mid j, j'\in[1, n+1],   j\ne j'\}, \\
    \Delta_1 &=\{\pm(\epsilon_i-\delta_j)\mid i\in[1, m+1], j\in[1, n+1]\},\\
    &\text{where $[1, N]$ denotes $\{1, \dots, N\}$ for any positive integer $N$.}
\end{aligned}
\]

\noindent $B(0, n)$:
\[
    \begin{aligned}
    \Delta_0 &=\{\pm\delta_j\pm\delta_{j'}, \ \pm2\delta_j \mid j, j'\in[1, n], j\ne j'\}, \\
    \Delta_1 &=\{\pm\delta_j\ \mid j\in[1, n]\}.
    \end{aligned}
\]

\noindent $B(m, n)$, $m>1$:
\[
    \begin{aligned}
    \Delta_0 &=\{\pm\epsilon_i\pm\epsilon_{i'}, \ \pm\epsilon_i \mid
   i, i'\in[1, m], i\ne i'\}\\
   &\cup\{\pm\delta_j\pm\delta_{j'}, \ \pm2\delta_j \mid j, j'\in[1, n], j\ne j'\}, \\
    \Delta_1 &=\{\pm\epsilon_i\pm\delta_j, \pm\delta_j\ \mid i\in[1, m], j\in[1, n]\},
    \end{aligned}
\]

\noindent $C(n+1)$:
\[
    \begin{aligned}
    \Delta_0 &=\{\pm\delta_j\pm\delta_{j'}, \ \pm2\delta_j \mid j, j'\in[1, n],  j\ne j'\}, \\
    \Delta_1 &=\{\pm\epsilon_1\pm\delta_j\mid j\in[1, n]\}.
    \end{aligned}
    \]

\noindent $D(m, n)$, $m>1$:
\[
    \begin{aligned}
    \Delta_0 =&\{\pm\epsilon_i\pm\epsilon_{i'} \mid i, i'\in[1, m], i\ne i'\}\\
            &\cup\{\pm\delta_j\pm\delta_{j'}, \ \pm2\delta_j \mid j, j'\in [1, n] j\ne j'\}, \\
    \Delta_1 =&\{\pm\epsilon_i\pm\delta_j\mid i\in[1, m], j\in[1, n]\}.
    \end{aligned}
\]

\noindent $F(4)$:
\[
    \begin{aligned}
    \Delta_0 &=\{\pm\epsilon_i\pm\epsilon_j, \ \pm\epsilon_i \mid i, j=1, 2, 3, \ i\ne j\}
    \cup\{\pm\delta \}, \\
    \Delta_1 &=\left\{\frac{1}{2}\big(\pm\epsilon_1\pm\epsilon_2\pm\epsilon_3\pm\delta\big)\right\},\\
    & (\delta, \delta)=-6, \quad (\epsilon_i, \epsilon_j)=2\delta_{i j},
    \quad (\epsilon_i, \delta)=0, \quad \forall i, j=1, 2, 3.
    \end{aligned}
\]

\noindent $G(3)$:
\[
    \begin{aligned}
    \Delta_0 =& \{\epsilon_i-\epsilon_j,
        \pm(2\epsilon_k-\epsilon_i-\epsilon_j)
    \mid  1\le i, j, k\le 3, \text{pairwise distinct} \} \\
            &\cup\{\pm2\delta \},\\
    \Delta_1 =&\{\pm\delta+(\epsilon_i-\epsilon_j), \ \pm\delta \mid i\ne j\},\\
&(\delta, \delta)=-2, \ (\epsilon_i, \epsilon_,)=\delta_{i j}, \
(\epsilon_i, \delta)=0, \
 \forall i, j=1, 2, 3.
\end{aligned}
\]

\noindent $D(2, 1; \alpha)$, $\alpha\in\C\backslash\{0, -1\}$:
\[
\begin{aligned}
    \Delta_0 &=\{\pm2\epsilon_i\mid \ i=1, 2\} \cup\{\pm2\delta \}, \\
    \Delta_1 &=\{\pm\delta\pm\epsilon_1\pm\epsilon_2\},\\
    &(\epsilon_1, \epsilon_1)=1, \quad (\epsilon_2, \epsilon_2)=\alpha,
    \quad (\delta, \delta)=-(1+\alpha), \quad (\epsilon_i,
    \delta)=0, \ \forall i.
\end{aligned}
\]

Denote by $\Pi=\{\alpha_1,\dots, \alpha_r\}$ the set of simple roots
of $\fg$ elative to the distinguished Borel subalgebra.  We have
\[
\begin{aligned}
&A(m|n):\quad
\Pi=\{\epsilon_1-\epsilon_2, \dots, \epsilon_m-\epsilon_{m+1},
    \epsilon_{m+1}-\delta_1, \delta_1-\delta_2, \dots,
    \delta_n-\delta_{n+1}\};\\
&B(0, n):\quad
    \Pi=\{\delta_1-\delta_2, \dots, \delta_{n-1}-\delta_n,   \delta_n\}; \\
&B(m, n), m>1:\\
&\Pi=\{ \delta_1-\delta_2, \dots,  \delta_{n-1}-\delta_n, \delta_n
-\epsilon_1,  \epsilon_1-\epsilon_2, \dots,
    \epsilon_{m-1}-\epsilon_m, \  \epsilon_m\};\\
&C(n+1):\quad
  \Pi=\{\epsilon_1-\delta_1, \  \delta_1-\delta_2, \dots,
\delta_{n-1}-\delta_n, 2\delta_n\};\\
&D(m, n), m>1:\\
&\Pi=\{\delta_1-\delta_2, \dots, \delta_{n-1}-\delta_n,
\delta_n-\epsilon_1, \ \epsilon_1-\epsilon_2,
    \epsilon_2-\epsilon_3, \dots,   \epsilon_{m-1}-\epsilon_m,
    \epsilon_{m-1}+\epsilon_m\};\\
&F(4):\quad
 \Pi=\left\{\frac{1}{2}(\epsilon_1+\epsilon_2+\epsilon_3+\delta), \
    -\epsilon_1, \  \epsilon_1-\epsilon_2, \ \epsilon_2-\epsilon_3\right\};
    \end{aligned}
\]
\[
\begin{aligned}
&G(3):\quad
 \Pi=\{\delta-\epsilon_1+\epsilon_3,\ \epsilon_1-\epsilon_2, \
        2\epsilon_2-\epsilon_1-\epsilon_3\};\\
&D(2, 1; \alpha), \alpha\in\C\backslash\{0, -1\}:\quad
 \Pi=\{\delta-\epsilon_1-\epsilon_2,\  2\epsilon_1,\
    2\epsilon_2\}.
\end{aligned}
\]
Note that there is a unique simple root, which we denote by $\alpha_s$, in each $\Pi$.
Thus $\Theta=\{s\}$.

\medskip

The simple roots relative to other Borel subalgebras can be obtained
by using odd reflections \cite{Se}. Let $\Pi_\fb=\{\alpha_1, \dots,
\alpha_r\}$ be the set of simple roots relative to a given Borel
subalgebra $\fb\subset \fg$. Take any isotropic odd simple root
$\alpha_t\in\Pi_\fb$, and define the odd reflection $s_t$ by
\[
\begin{aligned}
&s_t(\alpha_t)= -\alpha_t, \\
&s_t(\alpha_i)= \alpha_i + \alpha_t, &&\quad \text{if $i\ne t$
and $a_{i t}\ne 0$}, \\
&s_t(\alpha_i)= \alpha_i, &&\quad \text{if $i\ne t$ and $a_{i t}= 0$}.
\end{aligned}
\]
Then
$s_t(\Pi_\fb)=\{s_t(\alpha_1), \dots, s_t(\alpha_r)\}$ is the set of
simple roots relative to another Borel subalgebra, which is not
Weyl group conjugate to $\fb$. Further odd reflections can be
defined with respect to isotropic roots in $s_t(\Pi_\fb)$,
which turn $s_t(\Pi_\fb)$ into
sets of simple roots relative to other Borel
subalgebras. All the distinct sets obtained this way
correspond bijectively to the conjugacy classes of Borel
subalgebras.

\subsection{Dynkin diagrams}\label{append-diagrams}
\subsubsection{Dynkin diagrams in distinguished root systems}

The Dynkin diagrams in the distinguished
root systems are listed in Table 1 below, where $r$ is the number of nodes
and $s$ is the element of $\Theta$.
Note that the form of Dynkin diagrams in the distinguished
root systems is quite uniform in the literature.
Table 1 is essentially the corresponding table in
\cite{Kac1} with a slight modification in the Dynkin diagram for
$D(2, 1; \alpha)$.

\medskip

\begin{center}
{\bf Table 1. Dynkin diagrams in distinguished root systems}
\end{center}

\begin{picture}(400, 10)(0, 0)
\put(0, 0){\line(1, 0){390}}
\end{picture}


\begin{picture}(400, 10)(0, 5)
\put(5, 0){Lie superalgebra} \put(150, 0){Dynkin Diagram}
\put(320, 0){r} \put(370, 0){s}
\end{picture}

\begin{picture}(400, 20)(0, 0)
\put(0, 10){\line(1, 0){390}}
\end{picture}

\begin{picture}(400, 20)(0, 0)

\put(0, 10){A(m, n)}

\put(110, 10){\circle{10}}
\put(115, 10){\line(1, 0){10}}
\put(125, 10){...}
\put(135, 10){\line(1, 0){10}}
\put(150, 10){\circle{10}}
\put(155, 10){\line(1, 0){20}}
{ \color{gray} \put(175, 10){\circle*{10}} }
\put(180, 10){\line(1, 0){20}}
\put(205, 10){\circle{10}}
\put(210, 10){\line(1, 0){10}}
\put(220, 10){...}
\put(230, 10){\line(1, 0){10}}
\put(245, 10){\circle{10}}

\put(310, 10){m+n+1}
\put(360, 10){m+1}
\end{picture}

\begin{picture}(400, 30)(0, 0)

\put(0, 10){B(m, n), \ m$>$0}

\put(110, 10){\circle{10}}
\put(115, 10){\line(1, 0){10}}
\put(125,
10){...} \put(135, 10){\line(1, 0){10}}
\put(150, 10){\circle{10}}
\put(155, 10){\line(1, 0){20}}
{ \color{gray} \put(175,
10){\circle*{10}} }
\put(180, 10){\line(1, 0){20}}
\put(205, 10){\circle{10}}
\put(210, 10){\line(1, 0){10}}
\put(220, 10){...}
\put(230, 10){\line(1, 0){10}}
\put(245, 10){\circle{10}}
\put(250, 9){\line(1, 0){20}}
\put(250, 11){\line(1, 0){20}}
\put(260, 7){$>$}
\put(275, 10){\circle{10}}

\put(315, 10){m+n}
\put(365, 10){n}
\end{picture}

\begin{picture}(400, 30)(0, 0)

\put(0, 10){B(0, n)}

\put(110, 10){\circle{10}}
\put(115, 10){\line(1, 0){20}}
\put(140,
10){\circle{10}}
\put(145, 10){\line(1, 0){10}}
\put(155, 10){...}
\put(165, 10){\line(1, 0){10}}
\put(180, 10){\circle{10}}
\put(185, 10){\line(1, 0){20}}
\put(210, 10){\circle{10}}
\put(215, 9){\line(1, 0){20}}
\put(215, 11){\line(1, 0){20}}
\put(225, 7){$>$}
\put(240, 10){\circle*{10}}

\put(320, 10){n}
\put(370, 10){n}
\end{picture}

\begin{picture}(400, 30)(0, 0)

\put(0, 10){C(n), \ n$>$2}

{\color{gray} \put(110, 10){\circle*{10}} }
\put(115, 10){\line(1, 0){20}}
\put(140, 10){\circle{10}}
\put(145, 10){\line(1, 0){10}}
\put(155, 10){...}
\put(165, 10){\line(1, 0){10}}
\put(180, 10){\circle{10}}
\put(185, 10){\line(1, 0){20}}
\put(210, 10){\circle{10}}
\put(215, 9){\line(1, 0){20}}
\put(215, 11){\line(1, 0){20}}
\put(215, 7){$<$}
\put(240, 10){\circle{10}}

\put(320, 10){n}
\put(370, 10){1}
\end{picture}

\begin{picture}(400, 30)(0, 0)

\put(0, 10){D(m, n), \ m$>$1}

\put(110, 10){\circle{10}}
\put(115, 10){\line(1, 0){10}}
\put(125,
10){...}
\put(135, 10){\line(1, 0){10}}
\put(150, 10){\circle{10}}
\put(155, 10){\line(1, 0){20}}
{\color{gray} \put(180, 10){\circle*{10}} }
\put(185, 10){\line(1, 0){20}}
\put(210, 10){\circle{10}}
\put(215, 10){\line(1, 0){10}}
\put(225, 10){...}
\put(235, 10){\line(1, 0){10}}
\put(250, 10){\circle{10}}
\put(255, 10){\line(2, 1){20}}
\put(255, 10){\line(2, -1){20}}
\put(280, 20){\circle{10}}
\put(280, 0){\circle{10}}

\put(315, 10){m+n}
\put(370, 10){n}
\end{picture}

\begin{picture}(400, 30)(0, 0)
\put(0, 10){F(4)}
{ \color{gray} \put(140, 10){\circle*{10}} }
\put(145, 10){\line(1, 0){20}}

\put(170, 10){\circle{10}}
\put(175, 9){\line(1, 0){20}}
\put(175, 11){\line(1, 0){20}}
\put(175, 7){$<$}

\put(200, 10){\circle{10}}
\put(205, 10){\line(1, 0){20}}
\put(230, 10){\circle{10}}

\put(320, 10){4}
\put(370, 10){1}
\end{picture}

\begin{picture}(400, 30)(0, 0)

\put(0, 10){G(3)}

{ \color{gray} \put(150, 10){\circle*{10}} }
\put(155, 10){\line(1, 0){20}}
\put(180, 10){\circle{10}}
\put(185, 7){$<$}
\put(185, 11.5){\line(1, 0){20}}
\put(185, 10){\line(1, 0){20}}
\put(185, 8.5){\line(1, 0){20}}
\put(210, 10){\circle{10}}

\put(320, 10){3}
\put(370, 10){1}
\end{picture}

\begin{picture}(400, 40)(0, 0)

\put(0, 10){D(2, 1; $\alpha$)}
{ \color{gray} \put(165, 10){\circle*{10}} }
\put(170, 10){\line(2, 1){20}}
\put(170, 20){\tiny $-1$}
\put(170, -10){\tiny $-\alpha$}
\put(170, 10){\line(2, -1){20}}
\put(195, 20){\circle{10}}
\put(195, 0){\circle{10}}
\put(320, 10){3} \put(370, 10){1}
\end{picture}

\begin{picture}(400, 25)(0, 0)
\put(0, 10){\line(1, 0){390}}
\end{picture}

\subsubsection{Dynkin diagrams in non-distinguished root systems}

Table 2 gives the Dynkin diagrams of the non-distinguished root systems.
A nice graphical explanation can be found in \cite[\S 4]{CCLL} (see also \cite{FSS})
on how to obtain the Dynkin diagrams in Table 2 by applying odd reflections to those
in Table 1.

\medskip

\begin{center}
{\bf Table 2. Dynkin diagrams in non-distinguished root systems}
\end{center}

\begin{picture}(400, 15)(0, -5)
\put(0, 0){\line(1, 0){390}}
\end{picture}

\begin{picture}(400, 10)(0, 0)
\put(30, 0){Lie superalgebra}
\put(200, 0){Dynkin Diagram}
\end{picture}

\begin{picture}(400, 10)(0, 0)
\put(0, 0){\line(1, 0){390}}
\end{picture}

\begin{picture}(300, 20)(0,5)
\put(50, 0){A(m, n)}
\put(180, -3){$\times$}
\put(190, 0){\line(1, 0){20}}
\put(210, -3){$\times$}
\put(220, 0){\line(1, 0){10}}
\put(230, 0){\dots}
\put(240, 0){\line(1, 0){10}}
\put(250, -3){$\times$}
\put(260, 0){\line(1, 0){20}}
\put(280, -3){$\times$}
\end{picture}

\begin{picture}(400, 30)(0,5)
\put(50, 0){B(m, n),  $\rm{m}>0$}
\put(180, -3){$\times$}
\put(190, 0){\line(1, 0){20}}
\put(210, -3){$\times$}
\put(220, 0){\line(1, 0){10}}
\put(230, 0){\dots}
\put(240, 0){\line(1, 0){10}}
\put(250, -3){$\times$}
\put(260, 0){\line(1, 0){20}}
\put(280, -3){$\times$}
\put(290, 1){\line(1, 0){20}}
\put(290, -1){\line(1, 0){20}}
\put(300, -3.2){$>$}
\put(315, 0){\circle{10}}
\end{picture}

\begin{picture}(400, 20)(0,5)
\put(180, -3){$\times$}
\put(190, 0){\line(1, 0){20}}
\put(210, -3){$\times$}
\put(220, 0){\line(1, 0){10}}
\put(230, 0){\dots}
\put(240, 0){\line(1, 0){10}}
\put(250, -3){$\times$}
\put(260, 0){\line(1, 0){20}}
\put(280, -3){$\times$}
\put(290, 1){\line(1, 0){20}}
\put(290, -1){\line(1, 0){20}}
\put(300, -3.2){$>$}
\put(315, 0){\circle*{10}}
\end{picture}

\begin{picture}(400, 40)(0,5)
\put(50, -3){C(n)}
\put(150, 0){\circle{10}}
\put(155, 0){\line(1, 0){10}}
\put(165, 0){\dots}
\put(175, 0){\line(1, 0){10}}
\put(190, 0){\circle{10}}
\put(195, 0){\line(1, 0){20}}
{\color{gray} \put(220, 0){\circle*{10}}}
\put(225, 0){\line(1, 0){20}}
{\color{gray} \put(250, 0){\circle*{10}}}
\put(255, 0){\line(1, 0){20}}
\put(280, 0){\circle{10}}
\put(285, 0){\line(1, 0){10}}
\put(295, 0){\dots}
\put(305, 0){\line(1, 0){10}}
\put(320, 0){\circle{10}}
\put(325, 1){\line(1, 0){20}}
\put(325, -1){\line(1, 0){20}}
\put(350, 0){\circle{10}}
\put(330, -3.2){$<$}
\end{picture}

\begin{picture}(400, 30)(-185,5)
\put(0, 0){\circle{10}}
\put(6, 0){\line(1, 0){10}}
\put(17, 0){\dots}
\put(29, 0){\line(1, 0){10}}
\put(45, 0){\circle{10}}
\put(51, 0){\line(1, 0){20}}
\put(77, 0){\circle{10}}
\put(101, 17){\line(-1, -1){16}}
\put(101, -17){\line(-1, 1){16}}
{\color{gray}\put(107, 15){\circle*{10}}}
\put(106,-10){\line(0, 1){19}}
\put(108,-10){\line(0, 1){19}}
{\color{gray}\put(107, -16){\circle*{10}}}
\end{picture}

\begin{picture}(400, 60)(-180,5)
\put(-130, -3){D(m, n), $\rm m>1$}
\put(0, -3){$\times$}
\put(6, 0){\line(1, 0){20}}
\put(24, -3){$\times$}
\put(30, 0){\line(1, 0){10}}
\put(41, 0){\dots}
\put(54, 0){\line(1, 0){10}}
\put(62, -3){$\times$}
\put(68, 0){\line(1, 0){20}}
\put(86, -3){$\times$}
\put(117, 15){\circle{10}}
\put(111, 17){\line(-1, -1){16}}
\put(111, -17){\line(-1, 1){16}}
\put(117, -16){\circle{10}}
\end{picture}

\begin{picture}(400, 50)(-180,5)
\put(0, -3){$\times$}
\put(6, 0){\line(1, 0){20}}
\put(24, -3){$\times$}
\put(30, 0){\line(1, 0){10}}
\put(41, 0){\dots}
\put(54, 0){\line(1, 0){10}}
\put(62, -3){$\times$}
\put(68, 0){\line(1, 0){20}}
\put(86, -3){$\times$}
\put(111, 17){\line(-1, -1){16}}
\put(111, -17){\line(-1, 1){16}}
{\color{gray}\put(117, 15){\circle*{10}}}
\put(116,-10){\line(0, 1){19}}
\put(118,-10){\line(0, 1){19}}
{\color{gray}\put(117, -16){\circle*{10}}}
\end{picture}

\begin{picture}(400, 30)(-180,5)
\put(0, -3){$\times$}
\put(6, 0){\line(1, 0){20}}
\put(24, -3){$\times$}
\put(30, 0){\line(1, 0){10}}
\put(41, 0){\dots}
\put(54, 0){\line(1, 0){10}}
\put(62, -3){$\times$}
\put(68, 0){\line(1, 0){20}}
\put(86, -3){$\times$}
\put(94, 1){\line(1, 0){20}}
\put(94, -1){\line(1, 0){20}}
\put(95, -3.2){$<$}
\put(120, 0){\circle{10}}
\end{picture}

\begin{picture}(400, 40)(-185,5)
\put(-130, 0){$F(4)$}
\put(0, 0){\circle{10}}
\put(5, 2){\line(1, 0){20}}
\put(5, 0){\line(1, 0){20}}
\put(5, -2){\line(1, 0){20}}
\put(15, -3.25){$>$}
{\color{gray}\put(30, 0){\circle*{10}}}
\put(35, 1){\line(1, 0){20}}
\put(35, -1){\line(1, 0){20}}
\put(35, -3.25){$<$}
\put(60, 0){\circle{10}}
\put(65, 0){\line(1, 0){20}}
\put(90, 0){\circle{10}}
\end{picture}

\begin{picture}(400, 20)(-180,5)
{\color{gray}\put(0, 0){\circle*{10}}}
\put(5, 0){\line(1, 0){20}}
{\color{gray}\put(30, 0){\circle*{10}}}
\put(35, 1){\line(1, 0){20}}
\put(35, -1){\line(1, 0){20}}
\put(35, -3.25){$<$}
\put(60, 0){\circle{10}}
\put(65, 0){\line(1, 0){20}}
\put(90, 0){\circle{10}}
\end{picture}

\begin{picture}(400, 20)(-185,5)
\put(0, 0){\circle{10}}
\put(5, 2){\line(1, 0){20}}
\put(5, 0){\line(1, 0){20}}
\put(5, -2){\line(1, 0){20}}
\put(15, -3.25){$>$}
{\color{gray}\put(30, 0){\circle*{10}}}
\put(35, 0){\line(1, 0){20}}
\put(60, 0){\circle{10}}
\put(65, 1){\line(1, 0){20}}
\put(65, -1){\line(1, 0){20}}
\put(65, -3.25){$<$}
\put(90, 0){\circle{10}}
\end{picture}

\begin{picture}(400, 30)(-155,5)
\put(45, 0){\circle{10}}
\put(50, 1){\line(1, 0){20}}
\put(50, -1){\line(1, 0){20}}
\put(60, -3.5){$>$}
{\color{gray}\put(75, 0){\circle*{10}}}
\put(98, 19){\line(-1, -1){18}}
\put(98, 15){\line(-1, -1){16}}
\put(98, -19){\line(-1, 1){18}}
{\color{gray}\put(102, 15){\circle*{10}}}
\put(100,-10){\line(0, 1){19}}
\put(102,-10){\line(0, 1){19}}
\put(104,-10){\line(0, 1){19}}
{\color{gray}\put(102, -16){\circle*{10}}}
\end{picture}

\begin{picture}(400, 45)(-170,5)
\put(30, 0){\circle{10}}
\put(35, 1){\line(1, 0){20}}
\put(35, -1){\line(1, 0){20}}
\put(45, -3.5){$>$}
{\color{gray}\put(60, 0){\circle*{10}}}
\put(60, 5){\line(1, 1){10}}
\put(65, 1){\line(1, 0){20}}
\put(65, -1){\line(1, 0){20}}
{\color{gray}\put(90, 0){\circle*{10}}}
\put(90, 5){\line(-1, 1){10}}
\put(75, 15){\circle{10}}
\end{picture}

\begin{picture}(400, 50)(-170,5)
\put(-120, 0){$G(3)$}
{\color{gray}\put(30, 0){\circle*{10}}}
\put(35, 0){\line(1, 0){20}}
{\color{gray}\put(60, 0){\circle*{10}}}
\put(65, 2){\line(1, 0){20}}
\put(65, 0){\line(1, 0){20}}
\put(65, -2){\line(1, 0){20}}
\put(65, -3.25){$<$}
\put(90, 0){\circle{10}}
\end{picture}

\begin{picture}(400, 20)(-170,5)
\put(30, 0){\circle*{10}}
\put(35, 1){\line(1, 0){20}}
\put(35, -1){\line(1, 0){20}}
\put(35, -3.25){$<$}
{\color{gray}\put(60, 0){\circle*{10}}}
\put(65, 2){\line(1, 0){20}}
\put(65, 0){\line(1, 0){20}}
\put(65, -2){\line(1, 0){20}}
\put(65, -3.25){$<$}
\put(90, 0){\circle{10}}
\end{picture}

\begin{picture}(400, 40)(-130,5)
\put(75, 0){\circle{10}}
\put(80, 1){\line(1, -1){20}}
\put(80, -1){\line(1, -1){20}}
\put(82, -2){\line(2, -1){10}}
\put(82, -2){\line(1, -2){5}}
{\color{gray}\put(105, -20){\circle*{10}}}

\put(103, 15){\line(0, -1){30}}
\put(105, 15){\line(0, -1){30}}
\put(107, 15){\line(0, -1){30}}

{\color{gray}\put(105, 20){\circle*{10}}}

\put(100, 20){\line(-1, -1){19}}
\end{picture}

\begin{picture}(400, 60)(-130,25)
\put(-80, 0){$D(2, 1; \alpha)$}
{\color{gray}\put(75, 0){\circle*{10}}}
\put(98, 18){\line(-1, -1){18}}
\put(98, -18){\line(-1, 1){18}}
\put(80, 11){\tiny $-1$}
\put(73, -16){\tiny $1+\alpha$}
\put(102, 15){\circle{10}}
\put(102, -16){\circle{10}}
\end{picture}

\begin{picture}(400, 50)(-130,25)
{\color{gray}\put(75, 0){\circle*{10}}}
\put(98, 18){\line(-1, -1){18}}
\put(98, -18){\line(-1, 1){18}}
\put(80, 11){\tiny $-\alpha$}
\put(73, -16){\tiny $1+\alpha$}
\put(102, 15){\circle{10}}
\put(102, -16){\circle{10}}
\end{picture}

\begin{picture}(400, 50)(-130, 25)
{\color{gray}\put(75, 0){\circle*{10}}}
\put(98, 18){\line(-1, -1){18}}
\put(98, -18){\line(-1, 1){18}}
\put(82, 10){\tiny $1$}
\put(82 , -15){\tiny $\alpha$}
{\color{gray}\put(102, 15){\circle*{10}}}
\put(102,-10){\line(0, 1){19}}
\put(105,-5){\tiny $-(1+\alpha)$}
{\color{gray}\put(102, -16){\circle*{10}}}
\end{picture}

\begin{picture}(400, 80)(0, -15)
\put(0, 10){\line(1, 0){380}}
\end{picture}

In the diagrams in Table 2, a node marked with $\times$ can be white or grey.
However, the precise rule for assigning colours requires the knowledge
of the simple roots, which are described below.

\smallskip

\noindent  $A(m, n)$.
An ordering $({\mathcal E}_1,
{\mathcal E}_2, \dots, {\mathcal E}_{m+n+2})$ of
$\epsilon_i$ and $\delta_j$
is called admissible if $\epsilon_i$ appears before
$\epsilon_{i+1}$ for all $i$ and $\delta_j$ before $\delta_{j+1}$
for all $j$. Each admissible ordering
corresponds to one Weyl group conjugate class of Borel subalgebras, with the
associated simple roots given by ${\mathcal E}_a - {\mathcal
E}_{a+1}$ $(1\le a\le m+n+1)$. In particular, the distinguished
Borel corresponds to the admissible ordering such that all the $\epsilon_i$
appear before the $\delta_j$. Let us define $[{\mathcal E}_a]$
$(a=1, 2, \dots, m+n+2)$ by $[{\mathcal E}_a]=0$ (resp. $[{\mathcal
E}_a]=1$) if ${\mathcal E}_a$ is some $\epsilon_i$ (resp.
$\delta_j$). The $a$-th node from the left in the Dynkin diagram is
associated with the simple root ${\mathcal E}_a - {\mathcal
E}_{a+1}$, which is white if $[{\mathcal E}_a]=[{\mathcal E}_{a+1}]$
and grey otherwise.

\medskip

\noindent $B(m, n)$, $m>0$. Let $({\mathcal E}_1,
{\mathcal E}_2, \dots, {\mathcal E}_{m+n})$ be an admissible ordering of
$\epsilon_i$ $(i=1, \dots, m)$ and $\delta_j$ $(j=1, \dots, n)$.
Then the corresponding simple roots are
\[
{\mathcal E}_1 - {\mathcal E}_2, \dots, {\mathcal E}_{m+n-1} -
{\mathcal E}_{m+n}, {\mathcal E}_{m+n}.
\]
The first Dynkin diagram corresponds to the case ${\mathcal
E}_{m+n}=\epsilon_m$. The $a$-th node ($a<m+n$) from the left is
associated with the simple root ${\mathcal E}_a- {\mathcal
E}_{a+1}$, which is white if $[{\mathcal E}_a]=[{\mathcal E}_{a+1}]$
and grey otherwise. The second Dynkin diagram corresponds to the
case ${\mathcal E}_{m+n}= \delta_n$. The colours of the nodes marked
$\times$ are assigned in the same way as in type $A$.

\medskip

\noindent $C(n)$. We have  already specified the colours of the nodes in
the Dynkin diagrams, but it is still useful to have an explicit description of
the simple roots. Let $({\mathcal E}_1,
{\mathcal E}_2, \dots, {\mathcal E}_n)$ be an admissible ordering of
$\delta_j$ $(j=1, \dots, n-1)$ and $\epsilon_1$. The first Dynkin diagram
corresponds to the case with ${\mathcal E}_n=\delta_{n-1}$, where
simple roots are given by
\[
{\mathcal E}_1 - {\mathcal E}_2, \dots, {\mathcal E}_{n-1} -
{\mathcal E}_n, 2{\mathcal E}_n.
\]
The second Dynkin diagram corresponds to the case with ${\mathcal
E}_n=\epsilon_1$, where the simple roots are given by
\[
{\mathcal E}_1 - {\mathcal E}_2, \dots, {\mathcal E}_{n-1} -
{\mathcal E}_n, {\mathcal E}_{n-1}+{\mathcal E}_n.
\]
The colours of the nodes marked with $\times$'s are assigned in the
same way as in type $A$ and type $B$.

\medskip

\noindent  $D(m, n)$. Let $({\mathcal E}_1,
{\mathcal E}_2, \dots, {\mathcal E}_{m+n})$ be an admissible ordering of
$\epsilon_i$ $(i=1, \dots, m)$ and $\delta_j$ $(j=1, \dots, n)$.
If ${\mathcal E}_{m+n-1}=\epsilon_{m-1}$ and ${\mathcal
E}_{m+n}=\epsilon_{m}$,  or ${\mathcal E}_{m+n-1}=\delta_n$ and
${\mathcal E}_{m+n}=\epsilon_m$, the simple roots are given by
\[
{\mathcal E}_1 - {\mathcal E}_2, \dots, {\mathcal E}_{m+n-1} -
{\mathcal E}_{m+n},  {\mathcal E}_{m+n-1} + {\mathcal E}_{m+n}.
\]
The first Dynkin diagram corresponds to the former case, while the
second Dynkin diagram corresponds to the latter.
If ${\mathcal E}_{m+n-1}=\delta_{n-1}$ and ${\mathcal E}_{m+n}=\delta_n$,
the simple roots are given by
\[
{\mathcal E}_1 - {\mathcal E}_2, \dots, {\mathcal E}_{m+n-1} -
{\mathcal E}_{m+n},  2{\mathcal E}_{m+n}.
\]
The third Dynkin diagram corresponds to this case.

We assign colours to the nodes marked with $\times$ in the
same way as in the other cases.

\begin{remark} \label{rem:C-D-diagrams}
There are at least three grey nodes in the
Dynkin diagrams of type $D(m, n)$ in Table 2, but
in each of the Dynkin diagrams of type $C(n)$, there are only
two grey nodes which are always next to each other.
\end{remark}

\section{Presentations of irreducible modules}\label{modules}

In general it is hard to give an explicit description of a finite dimensional
irreducible module for a Lie superalgebra as the quotient of a (generalised) Verma module
in a form similar to \cite[Theorem 21.4]{H} in the context of ordinary semi-simple Lie algebras.
However, this is possible in some special cases, e.g., the natural module
for $\mathfrak{gl}_{m|n}$ in arbitrary root systems as discussed in Section
\ref{A-nondistingushed}. Here are two further cases, which are used in the proof of Lemma \ref{key}.

\subsection{An irreducible $\mathfrak{osp}_{2|4}$-module}\label{osp24-module}

Let $\fg$ be the Lie superalgebra $\mathfrak{osp}_{2|4}$ with the
choice of Borel subalgebra corresponding to the Dynkin diagram
\begin{center}
\begin{picture}(60, 40)(60,-10)
{\color{gray}\put(60, 0){\circle*{10}}}
\put(60, 5){\line(1, 1){10}}
\put(65, 1){\line(1, 0){20}}
\put(65, -1){\line(1, 0){20}}
{\color{gray}\put(90, 0){\circle*{10}}}
\put(90, 5){\line(-1, 1){10}}
\put(75, 15){\circle{10}}

\put(60, -12){\tiny 1}
\put(90, -12){\tiny 2}
\put(75, 23){\tiny 3}

\put(100, 0){.}
\end{picture}
\end{center}
We present $\fg$ in the standard fashion using Chevalley generators
$e_i, f_i, h_i$ ($i=1, 2, 3$) and relations with the higher order Serre relations
being those associated with diagrma (\ref{HOS-g-6}) in Theorem \ref{Serre-g}. To be specific, we
denote by $\alpha_i$ the simple roots and take
\[
(\alpha_1, \alpha_3)=(\alpha_2, \alpha_3)=-1,
\quad (\alpha_1, \alpha_2)=2, \quad  (\alpha_3, \alpha_3)=2.
\]
Let $\overline{\fp}$ be the parabolic subalgebra generated by all
the generators but $e_1$. Then
$\overline{\fp}=\fl\oplus\overline{\fu}$ with
$\fl=\mathfrak{gl}_{2|1}$ and $\overline{\fu}$ spanned by
\[
\begin{array}{l l l}
\zeta_1:=e_1, &\quad \zeta_2:=[e_1, e_3], \\
X_1:=[e_1, e_2], &\quad X_2:=[[e_1, e_2], e_3],
&\quad X_3:=[[[e_1, e_2], e_3], e_3].
\end{array}
\]

Given the irreducible $\overline{\fp}$-module
$\overline{L}^0_\lambda= \C v_0$ with lowest weight $\lambda$ such
that
\[
(\lambda, \alpha_2)=0, \quad (\lambda, \alpha_3)=0, \quad (\lambda, \alpha_1)=-2,
\]
we construct the generalised Verma module
$\overline{V}_\lambda=U(\fg)\otimes_{U(\overline{\fp})}\overline{L}^0_\lambda$.
Then the maximal submodule $M_\lambda$ of $\overline{V}_\lambda$ is
given by
\begin{eqnarray}\label{generator-M-C}
M_\lambda=U(\fg)\zeta_1 X_1 v_0.
\end{eqnarray}
The irreducible quotient
$\overline{L}_\lambda=\overline{V}_\lambda/M_\lambda$ is
$10$-dimensional with a basis
\[
\begin{array}{l l l l l}
v_0, \quad & X_1 v_0, \quad & X_2 v_0, \quad & X_3 v_0, \quad  & X_1 X_3 v_0, \\
\zeta_1 v_0,  \quad &  \zeta_1 X_2 v_0, \quad & \zeta_1  X_3 v_0,
\quad  & \zeta_1 X_1 X_3 v_0, \quad &
\zeta_1 \zeta_2 v_0.
\end{array}
\]

\subsection{Graded symmetric tensor for $\mathfrak{gl}_{m|n}$ }\label{S2-gl}

Let $\fg=\mathfrak{gl}_{m|n}$ and set $r=m+n-1$. Choose an arbitrary
homogeneous basis for the natural module $\C^{m|n}$ with the last
element being odd. We regard $\fg$ as consisting of matrices relative
to this basis. Take the subalgebra consisting of the upper triangular matrices
as the Borel subalgebra, which corresponds to an admissible ordering
$({\mathcal E}_1, {\mathcal E}_2, \dots, {\mathcal E}_{m+n})$
of $\epsilon_i$ ($1\le i\le m$) and $\delta_j$ ($1\le j\le n$)
with ${\mathcal E}_{m+n}=\delta_n$. See Appendix \ref{append-diagrams}
for more details.

Let $\fl$, $\fu$ and $\overline\fu$ be
subalgebras respectively spanned by matrix units $e_{r+1, r+1}$ and
$e_{i j}$ with $1\le i, j\le r$,  by $e_{i, r+1}$ with $1\le r$,
and by $e_{r+1, i}$ with $1\le r$. Set
$\overline\fp=\fl\oplus\overline\fu$, which is a parabolic
subalgebra, and $\fg=\overline\fp\oplus\fu$.

For $\lambda=2\delta_n$, we consider the generalised Verma module
$\overline{V}_\lambda:=U(\fg)\otimes_{U(\overline\fp)}\C_\lambda$ of
lowest weight type, where $\C_\lambda$ denotes the irreducible
$\overline\fp$-module with lowest weight $\lambda$. Let $v_0$ denote
a generator of $\C_\lambda$, then
\begin{eqnarray}
\begin{aligned}
&f_r v_0=0, \\
&e_i v_0=0, \quad f_i v_0 =0, &\quad&  1\le i\le r-1,  \\
&e_{j j} v_0 =2\delta_{j, r+1} v_0, &\quad& 1\le j\le r+1,
\end{aligned}
\end{eqnarray}
where $e_i=e_{i, i+1}$ and $f_i=e_{i+1, i}$.

Now $\overline{V}_\lambda\cong U(\fu)\otimes\C_\lambda$ as
$\fl$-module, where $U(\fu)=S_s(\fu)$, the $\Z_2$-graded symmetric
algebra of $\fu$. This superalgebra has a $\Z$-grading with $\fu$
having degree $1$. It induces a natural $\Z$-grading on
$\overline{V}_\lambda$. The unique maximal submodule $M_\lambda$ of
$\overline{V}_\lambda$ is the direct sum of the homogeneous
subspaces of degrees greater than or equal to $3$, which is
generated by $U(\fu)_3\otimes\C_\lambda$, the homogeneous subspace
of degree $3$. The irreducible quotient $\overline{L}_\lambda$ of
$\overline{V}_\lambda$ is isomorphic to the $\Z_2$-graded symmetric
tensor of the natural $\fg$-module at rank $2$.

The natural $\fl$ action on $U(\fu)$ (obtained by generalising the
adjoint action) respects the $\Z$-grading. In the present case,
each homogeneous component is in fact an irreducible submodule. We
are interested in $U(\fu)_3$. If $u_3$ is a nonzero lowest weight
vector of $U(\fu)_3$, then $M_\lambda$ is generated over $\fg$ by
$u_3\otimes \C_\lambda$. The form of $u_3$ depends on the ordering
of the basis for $\C^{m|n}$. Denote by $E_{i j}\in U(\fg)$  the
image of $e_{i j}\in\fg$ under the natural embedding. The $u_3$ can
be expressed as follows:
\begin{eqnarray}
\begin{aligned}
&u_3=E_{r, r+1}^3, &\quad& \text{if $E_{r, r+1}$ is even}; \\
&u_3=E_{r-1, r+1}^2 E_{r, r+1},
&\quad& \text{if both $E_{r, r+1}$ and $E_{r-1, r}$ are odd};\\
&u_3=E_{r-2, r+1}E_{r-1, r+1} E_{r, r+1},
&\quad& \text{if  $E_{r, r+1}$ is odd but $E_{r-1, r}$ is even}.
\end{aligned}
\end{eqnarray}
\begin{remark}\label{D-small-r}
The third case becomes vacuous if $r= 2$;
and both the second and third cases are vacuous if $r= 1$.
\end{remark}

The irreducible quotient $\overline{L}_\lambda
=\overline{V}_\lambda/M_\lambda$  is isomorphic to the graded
skew symmetric rank two tensor $\wedge^2_s(\C^{m|n})$ of the natural
$\fg$-module.

\end{appendix}

\bigskip

\end{document}